\pgfplotsset{compat=1.12}
\setlist[enumerate]{label=(\roman*)}
\theoremstyle{plain}
\newtheorem{theorem}{Theorem}[section]
\newtheorem{corollary}[theorem]{Corollary}
\newtheorem{proposition}[theorem]{Proposition}
\newtheorem{lemma}[theorem]{Lemma}
\theoremstyle{definition}
\newtheorem{definition}[theorem]{Definition} 
\theoremstyle{remark}
\newtheorem{remark}[theorem]{Remark}
\theoremstyle{definition}
\newcommand{\fungraph}[2]{\langle #1, #2 \rangle}
\newcommand{\Aut}{\textnormal{Aut}}
\newcommand{\C}{\mathbb{C}}
\newcommand{\Gcal}{\mathcal{G}}
\newcommand{\Char}{\textnormal{char}}
\newcommand{\kbar}{\overline{\K}}
\newcommand{\0}{{\mathcal{O}}}
\renewcommand{\P}{{\mathbb{P}}}
\newcommand{\F}{{\mathbb{F}}}
\newcommand{\HH}{\mathbb{H}}
\newcommand{\Gal}{\textnormal{Gal}}
\newcommand{\Hess}{\mathrm{H}}
\newcommand{\K}{{\Bbbk}}
\renewcommand{\S}{{\mathcal{S}}}
\newcommand{\NN}{{\mathbb{N}}}
\newcommand{\ZZ}{{\mathbb{Z}}}
\newcommand{\QQ}{{\mathbb{Q}}}
\newcommand{\End}{\mathrm{End}}
\newcommand{\Per}{\mathrm{Per}}
\newcommand{\ord}{\mathrm{ord}}
\newcommand{\cmark}{\ding{51}}%
\newcommand{\xmark}{\textcolor{lightgray}{\ding{55}}}%
\newcommand{\tr}{\mathrm{tr}}
\newcommand{\T}{\mathrm{T}}
\newcommand{\TT}{\mathcal{T}}
\newcommand{\z}{\zeta_3}
\newcommand{\Frob}{\textnormal{Frob}}
\newcommand\reallywidehat[1]{%
	\savestack{\tmpbox}{\stretchto{%
			\scaleto{%
				\scalerel*[\widthof{\ensuremath{#1}}]{\kern.1pt\mathchar"0362\kern.1pt}%
				{\rule{0ex}{\textheight}}
			}{\textheight}%
		}{2.4ex}}%
	\stackon[-6.9pt]{#1}{\tmpbox}%
}
\newcommand{\Twist}{\mathrm{Twist}}
\title{The Hessian of elliptic curves \textcolor{black}{as a Lattès map}}
 \author{
  Marzio Mula\thanks{University of the Bundeswehr Munich, \texttt{marziomula@gmail.com}} \and 
  Federico Pintore\thanks{University of Trento, \texttt{federico.pintore@unitn.it}} \and 
  Daniele Taufer\thanks{Max Planck Institute for Mathematics in the Sciences, \texttt{daniele.taufer@gmail.com}}
}
\date{July 2026}
\begin{document}

\maketitle

\begin{abstract}
We prove that the Hessian transformation of elliptic curves, both as an action on $j$-invariants and on the Hesse pencil, is a rigid Lattès map \textcolor{black}{fitting into a reduced diagram, hence it lifts} to a degree-$3$ endomorphism $\psi$ of a prescribed elliptic curve $E$.
This result provides an effective tool to investigate the dynamics of the Hessian transformation, \textcolor{black}{whose symmetries are inherited from those of $\psi$, which we characterize}.
In particular, over arbitrary fields of characteristic different from $2$ and $3$, \textcolor{black}{the functional graphs of the Hessian and, more generally, of Lattès maps fitting into analogous reduced diagrams, are}
completely determined by the action of $\psi$ on the twists of $E$.
When the underlying field is finite, we specialize these results to obtain a complete classification of Hessian functional graphs and derive an efficient method for computing iterated Hessians.
\end{abstract}


\section{Introduction}
Let $\K$ be a field of characteristic different from $2$ and $3$. 
Given a projective hypersurface $V(F) \subseteq \P^n(\K)$, its \emph{Hessian} 
is defined as the zero locus of the determinant of the Hessian matrix of $F$.
This classical construction enjoys several geometric properties, whose study began with the seminal works by Hesse~\cites{Hesse1,Hesse2} and has continued to the present day~\cite{cilibertoOttaviani:hessianMap,CilibertoOttaviani2026}.
When the considered form $F$ is a projective plane cubic (i.e., $n=2$ and $\deg(F) = 3$), the Hessian of $V(F)$ 
is, in turn, defined by a (possibly zero) cubic form.
Moreover, if $V(F)$ is also smooth (i.e., elliptic), the Hessian has proven to be a crucial tool for investigating its arithmetic properties \cite{Hesse3torsion,stanojkovskiVoll:hessianEC,TauSala:loops}.

It is natural to ask how a given elliptic curve relates to its Hessian and, more generally, to its iterated Hessian.
This question motivates the study of the (discrete) dynamical systems defined by the Hessian transformation, whose corresponding functional graphs will be referred to as \emph{Hessian graphs}.
The study of these graphs has already been undertaken by several authors for $\K=\C$ or $\K=\mathbb{R}$~\cite{salmon:higherPlaneCurves, hollcroft1926, popescuPampu2008:iterHess,artebaniDolgachev:hassePencil, cataneseSernesi2024:hesseModSpace,HessianJake}, and it falls within a broader field of research regarding the dynamics of rational functions \cite{NiederreiterShparlinski2003, silverman2007arithmetic,}.
However, despite their fundamental nature and theoretical relevance, the structure and symmetries of Hessian graphs have not been fully investigated yet.

The moduli space classically employed to study the Hessian transformation is $X(1) \simeq \textcolor{black}{\P^1(\kbar)}$, which parametrizes $\kbar$-isomorphism classes of elliptic curves 
by means of their $j$-invariants. 
However, as hinted in~\cite[Rmk.\,5.2]{popescuPampu2008:iterHess}, one may also consider a more structured model, which covers \textcolor{black}{$\P^1(\kbar)$}, by viewing the Hessian transformation as a \emph{Lattès map}~\cite{milnor2006lattes}\textcolor{black}{, namely a rational function $\phi: \P^1(\K) \to \P^1(\K)$, whose extension to $\P^1(\kbar)$ fits into a commutative diagram}
\begin{equation} \label{eq:LattDiag}
    \begin{tikzcd}
    E \arrow{d}{\pi}\arrow{r}{\psi} & E \arrow{d}{\pi} \\
    \P^1(\kbar)  \arrow{r}{\phi} & \P^1(\kbar)
\end{tikzcd}
\end{equation}
\textcolor{black}{where $E$ is an elliptic curve, $\psi$ a curve morphism and $\pi$ a finite projection.
The diagram of \cref{eq:LattDiag} is called \emph{reduced} if $\pi$ is given by the natural projection by a subgroup $\Gamma \subseteq \Aut(E)$. The map $\phi$ is called \emph{rigid} if $\deg(\pi) \neq 2$ or $\psi$ cannot be chosen as $[P] \mapsto [m]P+T$, for any $m \in \ZZ$ and $T \in E$.}

\textcolor{black}{In this work, we consider two projective Hessian transformations, namely
\begin{align*}
    \Hess : \P^1(\K) \to \P^1(\K), \quad &[u:v] \mapsto [(u-6912v)^3:-27u^2v], \\
    \Lambda : \P^1(\K) \to \P^1(\K), \quad &[u:v] \mapsto [ u^3 + 108 v^3 : -3u^2v ].
\end{align*} 
The map $\Hess$ encodes the action of the Hessian map on $j$-invariants, while $\Lambda$ describes the Hessian action on the Hesse pencil~\cite{artebaniDolgachev:hassePencil,Abdallah2023}, namely the curves defined by $\{ x^3 + y^3 + z^3 + \lambda xyz \}_{\lambda \in \K} \cup \{xyz\}$.
We characterize the fibers of the $j$-function on the Hesse pencil (\Cref{Sect2:Prop-IsoClasses}), and we exhibit an explicit bijection between the order-$2$ $\K$-rational points of an elliptic curve $E$ and $\Hess^{-1}(j(E))$ (\Cref{prop:j2tors}).
This shows that computing the order-$2$ points of $E(\K)$ is equivalent to computing its inverse Hessian. 
We then turn our attention to the Lattès nature of the Hessian, proving the following:}

\smallskip

\textcolor{black}{\textbf{Theorem} [\Cref{thm:projectphi3}, \Cref{cor:rigidlattes}]:
    For any $\K$, both $\Hess$ and $\Lambda$ are rigid Lattès maps fitting into reduced Lattès diagrams.
}
\smallskip

\textcolor{black}{Such Lattès diagrams are explicitly given by projecting a degree-$3$ endomorphism $\psi$ of the curve}
\[ E \ : \ y^2 = x^3 - 1728. \]

\textcolor{black}{This curve has $j$-invariant $0$, therefore we have an embedding $\ZZ[(-1-\sqrt{-3})/2] \hookrightarrow \End(E)$.
Under this identification, the lifted Hessian $\psi$ corresponds to the complex multiplication by $\sqrt{-3}$ (\Cref{subsec:Ek}), 
which implies that $E$ is a CM-curve in characteristic $0$.
This is a non-generic phenomenon for elliptic curves, which leads to additional arithmetic structure and number-theoretic applications \cite{kohel:endomRings,cox:primesOfFormX2+nY2,ECHasse}.
Such CM-curves are natural candidates to construct rigid Lattès maps (\Cref{rmk:CMforRigid}).}

\textcolor{black}{To understand the global symmetries displayed by the dynamical systems of $\Hess$ and $\Lambda$, we study the regular dynamics induced by group homomorphisms with prime kernel:}

\smallskip

\textcolor{black}{\textbf{Theorem} [\Cref{thm:structure}]:
    Let $\phi$ be a group endomorphism with prime kernel, and let $\tau_{\0}$ be the functional graph defined by all the elements that are eventually mapped to the group identity $\0$.
    The connected components in the functional graph of $\phi$ consist of cycles, lines or semilines, whose vertices are roots of arborescences that are isomorphic to sub-arborescences of $\tau_{\0}$.
}
\smallskip

\textcolor{black}{The above result provides a rigid classification of the connected components that may arise in the functional graph of $\psi$.
However, when the field $\K$ is not algebraically closed, the fiber $\S_{\pi} = \pi^{-1}\big( \P^1(\K) \big)$ need not be a group, hence the regular dynamics of $\psi$ on $E$ is not necessarily inherited on $\S_{\pi}$.
We resolve this issue for reduced Lattès diagrams by proving the following structural result:}

\smallskip

\textcolor{black}{\textbf{Theorem} [\Cref{thm:StructureS}]:
    Let $E$ be an elliptic curve in (short) Weierstrass form and $\pi:E \to \P^1$ be a suitable projection.
    Then $\S_{\pi}$ is covered by isomorphic copies of twists of $E$ over $\K$.
}
\smallskip

\textcolor{black}{The above covering is minimal, as different components intersect only in points with some zero coordinate (\Cref{prop:newdisjpartsSk}).
We prove that the regular action of $\psi$ on these components is preserved after $\pi$, with only a few possible identifications (\Cref{prop:identifyancestors,prop:GenProjectingSk}).
These general tools may be applied to study the dynamics of Lattès maps fitting into a reduced diagram.}

\textcolor{black}{We then specialize these results to the Hessian transformations, showing that the arithmetic properties of $E$ can be leveraged to understand the loops (\Cref{lem:selfloops}) and the structure of connected components (\Cref{prop:Ord3,prop:torsion}) of $\psi$.
This allows us to fully describe the identifications arising from $\pi$, and therefore provide a detailed characterization of the connected components in the dynamical systems generated by $\Hess$ and $\Lambda$ (\Cref{prop:projectingSk}).}

In the last part of the paper, we restrict our attention to finite fields $\K = \F_q$.
\textcolor{black}{In this case, we give a full characterization of elements $j \in \P^1(\F_q)$ with a prescribed size of $\Hess^{-1}(j)$:}

\smallskip

\textcolor{black}{\textbf{Theorem} [\Cref{lemma:trace2}, \Cref{thm:indeg1}]:
    Let $E_{(j)}$ be an elliptic curve with $j$-invariant $j \in \F_q^*$. Then $|\Hess^{-1}(j)| = 0$ if and only if the trace of $E_{(j)}$ is even.
    If also $j \neq 1728$, then $|\Hess^{-1}(j)| = 1$ if and only if $j - 1728$ is not a square in $\F_q^*$.
}
\smallskip

We exploit this characterization to provide a complete description of the structure of Hessian graphs, which 
decisively depends on $q \bmod 3$.
When $q \equiv 1 \bmod 3$, we prove that Hessian graphs 
consist of unions of six known functional graphs of group endomorphisms, up to pruning certain branches (\Cref{thm:structureq=1}).
When $q \equiv 2 \bmod 3$, the curve $E$ is supersingular, and we exploit its known group structure to show that Hessian graphs arise from an ordered merge of two isomorphic graphs, whose depth and cycles are fully determined (\Cref{thm:structureq=2}).

Finally, we employ our results to guarantee that any iteration of the Hessian transformation over $\F_q$ can be computed in polynomial time in the size of $q$ (\Cref{prop:iterHess}).
This paves the way for potential cryptographic applications of Hessian graphs.
In this direction, we prove that supersingular $j$-invariants all lie in prescribed connected components (\Cref{prop:locatess}), whose vertices have all the same known trace modulo $3$ (\Cref{prop:supersingTrace}).

\medskip
\noindent 
\textbf{Paper organization.}
In \Cref{Sec:Prel}, we recall the main properties of the objects considered in our work and provide novel preliminary results on the Hessian transformation.
\textcolor{black}{In \Cref{sec:HessianIsLattes}, we give an explicit reduced Lattès structure to the Hessian maps. 
In the following sections, we prove general results about the structure of functional graphs of group endomorphisms with prime kernel (\Cref{sec:ConnComp}), and the projection of such regular graphs in reduced Lattès diagrams (\Cref{sec:Sk}).
In \Cref{sec:Functionalpsik}, we apply these results to understand the Hessian dynamics.}
In \Cref{sec:HessianFF}, we further specialize the study of Hessian graphs to completely describe their structure over finite fields.
In \Cref{sec:crypto}, we discuss the potential use of our results for cryptographic purposes, while, in \Cref{sec:conclusion}, we draw our conclusions.
Finally, we include complementary materials, algorithms, and visual examples in \Cref{App:IsoClasses,App:twists,App:tightnessOfStructureThm,App:iterHess,App:examples}.


\section{Preliminaries}
\label{Sec:Prel}

\noindent \textbf{Notation.} Throughout the whole paper, $\K$ always denotes a field of characteristic $\Char(\K) \neq 2,3$, and $\F_q$ denotes the finite field of size $q=p^r$ for some prime integer $p$ and $r \in \NN_{>0}$.
The set of invertible elements of $\K$ will be denoted by $\K^* = \K \setminus \{0\}$, and its algebraic closure by $\kbar$.
For $n \in \NN$, we denote \textcolor{black}{the set of $n$-th powers in $\K$ (resp., in $\K^*$)} by $\K^n = \{k^n\}_{k \in \K}$ (resp., $(\K^*)^n = \{k^n\}_{k \in \K^*}$).
We also \textcolor{black}{fix $\sqrt{-3} \in \kbar$ and a primitive third root of unity $\z = \frac{-1-\sqrt{-3}}{2} \in \kbar$}.


\subsection{Elliptic curves}

In this work, we will mostly consider irreducible cubic forms $F\in \K[x,y,z]$, which defines a projective curve in $\P^2(\K)$ \textcolor{black}{with a smooth $\K$-rational point}.
Since $\mathsf{char}(\K) \neq 2,3$, with a suitable change of coordinates over $\K$ \cite[§III.1]{silv:arithEll}, we can always assume such an $F$ to be in short Weierstrass form:
\begin{equation}
\label{eq:WeirForm}
    F = x^3 + Axz^2 + Bz^3 - y^2z, \qquad A,B \in \K.
\end{equation}
In particular, when its \emph{discriminant} $\Delta(F) = -16(4 A^3 + 27 B^2)$ is not $0$, it defines a smooth plane cubic, known as \emph{elliptic curve} (over $\K$).
Under this assumption, we conventionally denote it by $E$. 
We will often restrict the projective plane to the affine chart $z \neq 0$, where the curve $E$ is defined by
\begin{equation}
\label{eqn:weierstrassEqn}
y^2=x^3+Ax + B, \qquad A,B \in \K.
\end{equation}
We will denote its unique non-affine point by $\0 = [0:1:0] \in \P^2(\K)$.

We refer to~\cites{silv:arithEll,washington:ellipticCurves} for classical definitions and results on elliptic curves, and recall only what is needed for the present paper.
The \emph{$\K$-rational points} of an elliptic curve $E$, denoted by $E(\K)$, are the solutions of \cref{eqn:weierstrassEqn} over $\K$, together with the point \emph{at infinity} $\0$.
%
Every elliptic curve $E$ can be endowed with the structure of an abelian group $(E(\kbar),+)$, whose zero element is $\0$ and with $E(\K)$ as a subgroup. 
An \emph{isogeny} between two elliptic curves $E_1,E_2$ over $\K$ is a curve morphism
\[\varphi \colon E_1 \rightarrow E_2\]
that is also a group homomorphism. We say that $\varphi$ is \emph{defined over $\K$}  
if the rational functions defining $\varphi$ can be chosen with coefficients in $\K$.
We say that a separable isogeny has \emph{degree} $n \in \NN_{>0}$, or it is an \emph{$n$-isogeny}, if its kernel has (finite) size $n$.
A separable isogeny of degree $1$ is an \emph{isomorphism}.
Every $\kbar$-isomorphism class of an elliptic curve \textcolor{black}{$E$ defined over $\K$} is uniquely determined by its \emph{$j$-invariant} $j(E) \in \K$.
It can be easily retrieved from the coefficients of any elliptic curve $E$ in the isomorphism class.
In particular, when $E$ is defined by \cref{eqn:weierstrassEqn}, its $j$-invariant is 
\[ j(E) = 1728\frac{4A^3}{4A^3 + 27B^2}. \]
An elliptic curve $E'$ that is isomorphic to $E$ over $\kbar$ is called a \emph{twist} of $E$.
\textcolor{black}{In this work, we only consider isomorphisms of \emph{elliptic curves}, namely, those respecting their group structure.}
If such an isomorphism is not defined over the base field $\K$ of $E$, then the twist $E'$ is called \emph{proper}.
The set of twists of $E$, up to $\K$-isomorphism, is denoted by \textcolor{black}{$\mathrm{Twist}(E,\0)$}, and its structure is well-known.

\begin{lemma}[{\cite[Prop.\,X.5.4]{silv:arithEll}}] \label{lemma:twists}
    Let $E$ be the elliptic curve defined over $\K$ by $y^2=x^3+Ax+B$, and
       \[
            n = \begin{cases}
            2 \quad \text{if $j(E) \not \in  \{0,1728\}$},\\
            4 \quad \text{if $j(E) =1728$},\\
            6 \quad \text{if $j(E) =0$}.\\
            \end{cases}
       \]
    Then $\textcolor{black}{\mathrm{Twist}(E,\0)} \cong \K^*/(\K^*)^n$. The elements $E^{(D)} \in \textcolor{black}{\mathrm{Twist}(E,\0)}$ can be listed for $D \in \K^*/(\K^*)^n$ as 
        \begin{enumerate}
            \item \makebox[3.7cm][l]{$y^2 = x^3 + D^2 Ax + D^3 B$,} \quad if $j(E) \not \in \{0,1728\}$,
            \item \makebox[3.7cm][l]{$y^2 = x^3 + D Ax$,} \quad if $j(E)=1728$,
            \item \makebox[3.7cm][l]{$y^2 = x^3 + D B$,} \quad if $j(E)=0$.
        \end{enumerate}
\textcolor{black}{ Moreover, writing $D^{1/n}\in \kbar$ for a chosen $n$-th root of $D$, we denote the corresponding $\kbar$-isomorphism by
    \[
    \iota_{D}:E^{(D)} \to E, \quad (x,y)\mapsto\big(D^{-2/n}x,\;D^{-3/n}y\big).
    \]}
\end{lemma}


An isogeny from an elliptic curve $E$ to itself is called an \textit{endomorphism}.
The ring of endomorphisms of $E$ is denoted by $\End(E)$, \textcolor{black}{while the group of its automorphisms (fixing $\0$) is denoted by $\Aut(E)$. 
This is a well-known cyclic group, which only depends on the $j$-invariant of $E$ \cite[§III.10]{silv:arithEll}.
The automorphisms of $E$ that are defined over $\K$ will be denoted by $\Aut_{\K}(E)$}.
For any $m \in \NN$, the \textit{multiplication-by-$m$ map} $[m] \in \End(E)$ is 
defined for every $P \in E$ by
\[ [m]P = \underbrace{P + P +\dots+ P}_{m~\text{times}}. \]
The above definition extends to $m \in \ZZ$ by setting $[-m]P = -([m]P)$.
For each $m \in \ZZ_{\geq 1}$, the \emph{$m$-torsion} of an elliptic curve $E$ is its subgroup $E[m] = \ker([m])\subseteq E(\kbar)$, whose points are called $m$-\emph{torsion points}.
	

\subsection{Hessian of a cubic}

Given a homogeneous cubic $F \in \K[x,y,z]$, we define its \emph{Hessian} $\Hess(F)$ as the determinant of its Hessian matrix $\HH_F$ \cite[§5]{cilibertoOttaviani:hessianMap}.
It is easy to see that $\Hess(F)$ is either $0$ or again a homogeneous cubic.
This induces an equivariant transformation of the associated projective curves: if $\phi \in \mathrm{GL}_3(\K)$ is a linear change of variables, represented by the matrix $\Phi$ in the standard basis $\{x,y,z\}$, a straightforward application of the chain rule shows that
\[ \Hess\big(\phi(F)\big) = \det\big( \Phi^{T} \phi(\HH_{F}) \Phi \big) = (\det\Phi)^2\phi\big(\Hess(F)\big). \]
Thus, the Hessian respects $\K$-isomorphism classes.
As anticipated, we are mostly interested in irreducible cubics $F$, which 
we can assume to be in short Weierstrass form (\cref{eq:WeirForm}).
A straightforward computation then shows that in this case
\begin{align*}
\Hess(F)&= -8( 3 x y^2 + 3A x^2 z + 9B x z^2 - A^2 z^3).
\end{align*}
The following result analyzes the Hessian of smooth cubics, i.e., elliptic curves.
In particular, it shows that the elliptic curves whose Hessian also defines an elliptic curve are exactly those with non-zero $j$-invariant. 

\begin{proposition} \label{prop:HessofE}
Let $E$ be an elliptic curve defined by $y^2z=x^3+Axz^2+Bz^3$.
\begin{enumerate}
    \item\label{eni} If $A=0$, then $\Hess(E) = -24x(y^2+3Bz^2)$ defines three independent lines. Moreover, we have $\Hess\big(\Hess(E)\big)=(-24^3B)\Hess(E)$.
    \item\label{enii} If $A\neq0$, the short Weierstrass form of $\Hess(E)$ is
    \begin{equation}\label{eqn:hessianSWForm}
    x^3 - \frac{A^3 + 9B^2}{3A^4} x z^2 - \frac{A^3B + 6B^3}{3A^6} z^3 - zy^2,
    \end{equation} 
and we have
\begin{align}
\nonumber
    \Delta\big(\Hess(E)\big) &= \frac{(64/27)A^3 + 16B^2}{A^6} = -\frac{\Delta(E)}{27 A^6},\\
\label{eqn:jInvariant}
    j\big(\Hess(E)\big)&=\frac{1728(A^3+9B^2)^3}{ A^6  \bigl(A^3 + (27/4)B^2\bigr) }=j(E)  \frac{(A^3+9B^2)^3 }{ A^9}=\frac{ \big(6912-j(E) \big)^3}{27 j(E)^2}.
\end{align}
\end{enumerate}
\end{proposition}
\begin{proof}
\ref{eni}: When $A=0$, we straightforwardly check that
\[ \Hess(E) = -24x(y^2+3Bz^2) = -24x(y+\sqrt{-3B}z)(y-\sqrt{-3B}z). \]
Since $E$ is smooth we have $B \neq 0$, hence the linear factors of $\Hess(E)$ define independent lines.
Another direct computation shows that $\Hess\big(\Hess(E)\big)=(-24^3B)\Hess(E)$.

\ref{enii}: When $A \neq 0$, the Hessian of $E$ is 
\begin{equation*}
    \Hess(E) = -216A^2\left(x\left(\frac{y}{3A}\right)^2 + \frac{1}{3A} x^2 \left(\frac{z}{3}\right) + \frac{3B}{A^2}x\left(\frac{z}{3}\right)^2- \left(\frac{z}{3}\right)^3\right)
\end{equation*}
and the change of coordinates $[x:y:z] \mapsto [z : 3A y : 3x+3B/A^2z]$ yields the short Weierstrass form of \cref{eqn:hessianSWForm}. 
The discriminant and $j$-invariant can be directly computed from the coefficients of \cref{eqn:hessianSWForm}. 
\end{proof}

\begin{remark} \label{rem:degenerate}
In general, 
the Hessian fixes three independent lines: if
\[ F = (a_{11}x+a_{12}y+a_{13}z)(a_{21}x+a_{22}y+a_{23}z)(a_{31}x+a_{32}y+a_{33}z), \]
then $\Hess(F) = (2\det M^2)F$, where $M=(a_{ij})_{1\leq i,j \leq 3}$.
\end{remark}

\begin{remark} 
    \label{rem:flexPoints}
    The points in $E \cap H(E)$ are called \emph{flex points} or \emph{inflection points} of~$E$.
    It is known that the inflection points of an elliptic curve coincide with its $3$-torsion points, provided that the base point is chosen to be an inflection point \cite{dickson:pointsInflexion}.
    Moreover, as observed in~\cite[§3]{Hesse3torsion}, if the base point of $\Hess(E)$ is also chosen among the inflection points of $E$ (the natural choice being the same base point of $E$) and $\Hess(E)$ is again elliptic, then $E \cap \Hess(E)$ also coincides with the $3$-torsion points of $\Hess(E)$.
    Indeed, the addition laws on $E$ and $\Hess(E)$ 
    coincide on $E \cap \Hess(E) = E[3]$ because the chord-tangent construction on $E$ agrees with the same procedure on $\Hess(E)$: a line through two points (with multiplicity) of $E[3]$ meets $E$ in another point of $3$-torsion, hence it also belongs to $\Hess(E)$.
\end{remark}

\Cref{prop:HessofE} shows that the Hessian defines a transformation between $\kbar$-isomorphism classes, which we denote again by $\Hess$:
\[ \Hess(j) = \frac{ (6912-j)^3}{27 j^2}. \]
This map can be extended to a projective map of $\P^1(\K)$ by assigning to the union of three independent lines the value $\infty = [1:0] \in \P^1(\K)$, i.e.,
\begin{equation}
\label{eq:Hessj}
\Hess : \P^1(\K) \to \P^1(\K), \quad [j:1] \mapsto [(j-6912)^3 : -27j^2], \quad \quad [1:0] \mapsto [1:0].
\end{equation}
Throughout the paper, we refer to this map as the \emph{Hessian transformation} or \emph{Hessian map}. 


\subsection{The Hesse pencil} \label{subsec:hessPenc}

Rather than considering cubics in short Weierstrass form, some authors use the \emph{Hessian form}
\begin{equation} \label{eq:lambda}
    F_\lambda = x^3 + y^3 + z^3 + \lambda xyz, \qquad \lambda \in \K.
\end{equation}
The curve defined by $F_\lambda$ 
is nonsingular if and only if $\lambda^3 \neq -27$ \cite[§2.3.3]{hisil2010:ECs}.
The set of forms $\{ F_{\lambda} \}_{\lambda \in \K} \cup \{ xyz \}$
is called the \emph{Hesse pencil (over $\K$)}.
When $\lambda = 0$ we have $\Hess(F_0) = xyz$, i.e., the union of three independent lines.
For every other $\lambda \neq 0$, a simple computation~\cite[§V.218]{salmon:higherPlaneCurves} shows that $\Hess(F_\lambda) = F_{\lambda'}$ is again in Hessian form, where
\[ \lambda' = -\frac{108+\lambda^3}{3\lambda^2}. \]
By setting $F_{\infty} = xyz$, the Hessian on the Hesse pencil determines again a projective map 
\begin{equation}
\label{eqn:lambda}
\Lambda : \P^1(\K) \to \P^1(\K), \quad [\lambda:1] \mapsto [\lambda^3+108 : -3\lambda^2], \quad \quad [1:0] \mapsto [1:0].
\end{equation}

The elliptic curves of the Hesse pencil have all the same $9$ inflection points, which can be computed explicitly \cites[§7]{dickson:pointsInflexion}[§2]{artebaniDolgachev:hassePencil}:
\begin{align*}
    p_0 &= [0: 1: -1], & p_1 &= [0: 1: -\z],  & p_2 &= [0: 1: -\z^2],\\
    p_3 &= [1: 0: -1], & p_4 &= [1: 0: -\z^2],& p_5 &= [1: 0: -\z],\\
    p_6 &= [1:-1: 0],  & p_7 &= [1:-\z: 0],   & p_8 &= [1:-\z^2: 0].
\end{align*}
In fact, an elliptic curve belongs to the Hesse pencil if and only if its inflection points are $p_0,\dots, p_8$. 

Given an elliptic curve defined by $F_\lambda$, its $\overline{\K}$-isomorphism class is determined by its $j$-invariant, which can be directly computed \cite[§2.3.3]{hisil2010:ECs} as
    \begin{equation}
    \label{eqn:jInvPencil}
        j(\lambda) = \frac{\lambda^3(216-\lambda^3)^3}{(\lambda^3+27)^3}.
    \end{equation}

\begin{remark}
    The Hesse pencil may also contain singular cubics, namely
    \[ F_{-3\z^i} = \z^2 (\z x + \z^i y + \z^{2i} z) (\z^i x + \z^{2i}y + \z z) (\z^{2i} x + \z y + \z^i z), \quad i \in \{0,1,2\}, \]
    which all correspond to $j = \infty$ according to \cref{eqn:jInvPencil}, as prescribed by \Cref{rem:degenerate}.
    Notice that $F_{-3}$ belongs to the Hesse pencil over any field $\K$, while the others precisely when $\z \in \K$.
    Moreover, the relation $\Hess(F_{\lambda}) = F_{-3\z^i}$ is equivalent to
    \[ 0 = \lambda^3 - 9\z^i \lambda^2 + 108 = (\lambda + 3 \z^i)(\lambda - 6 \z^i)^2. \]
    Thus, the singular cubics are fixed points for the Hessian, and they may be obtained as the Hessian of the curves defined by $\{ F_{6\z^i} \}_{i \in \{0,1,2\}}$, which have all $j$-invariant $0$ by \cref{eqn:jInvPencil}.
\end{remark}

The connection between $\lambda$ and $j$ is further detailed by the following proposition.

\begin{proposition}
\label{Sect2:Prop-IsoClasses}
Let $j(\lambda)$ be as in \cref{eqn:jInvPencil} and define the map
\[ \mathcal{J} \colon \P^1(\K) \to \P^1(\K), \quad \lambda \mapsto j(\lambda), \quad [1:0] \mapsto [1:0]. \]
For every $j \in \P^1(\K)$ such that $\mathcal{J}^{-1}(j) \neq \emptyset$, we have:
\begin{enumerate}
     \item If $\z \in \K$, then $|\mathcal{J}^{-1}(j)| = 12$ except for $|\mathcal{J}^{-1}(0)| = 4 = |\mathcal{J}^{-1}(\infty)|$ and $|\mathcal{J}^{-1}(1728)| = 6$. 
        \item\label{propjlambdaii} If $\z \not\in \K$, then $|\mathcal{J}^{-1}(j)| = 2$.
    \end{enumerate} 
\end{proposition}
A proof for the finite-field case can be found in~\cite[Lem.\,10]{farashahi2011:numbOfLegJacHessEd}, while in \Cref{App:IsoClasses} we provide a constructive proof that holds over arbitrary fields $\K$. 


\subsection{Discrete dynamical systems and Hessian graphs}

A \emph{(discrete) dynamical system} consists of a set $S$ and a map $\phi \colon S \rightarrow S$ \cite{silverman2007arithmetic}.
\begin{definition}
    The \emph{functional graph of $\phi:S \to S$} is the directed graph whose vertices are the elements of $S$ and whose edges are $\{x \to \phi(x)\}_{x \in S}$.
    We denote such a graph by $\fungraph{S}{\phi}$.
\end{definition}

\textcolor{black}{Two functional graphs  $\fungraph{S}{\phi}$ and  $\fungraph{S}{\phi'}$ are \emph{isomorphic} if $\phi$ and $\phi'$ are \emph{conjugate}, namely there exists an automorphism $f$ of $S$ such that $\phi=f^{-1} \circ \phi' \circ f$.}
\textcolor{black}{In particular, when $S = \P^1(\K)$, a conjugation is given by a M\"obius transformation $f \in \Aut_{\K}(\P^1)$.}
We call \textit{Hessian graph} the functional graph $\fungraph{\P^1(\K)}{\Hess}$ of the Hessian transformation $\Hess$ defined in \cref{eq:Hessj}.
We denote by $\phi^{(n)}$ the $n$-th iterate of $\phi$, i.e., the composition of $\phi$ with itself $n$ times.
By convention, $\phi^{(0)}$ is the identity map on $S$.
A point $P \in S$ is called \emph{periodic} (with respect to $\phi$) if it lies on a \emph{cycle} of $\fungraph{S}{\phi}$, i.e., if there exists $n \in \NN_{>0}$ such that $\phi^{(n)}(P) = P$.
We denote the set of periodic points of $\phi$ by $\Per(\phi)$.
If there exists $m \in \NN$ such that $\phi^{(m)}(P) \in \Per(\phi)$, we say that $P$ is \emph{preperiodic}.

\begin{definition} \label{defn:depth}
    Let $\phi : S \to S$ and $T \subseteq S$. 
    For every $P \in S$, we define its \emph{depth} (with respect to $T$) as
    \[ d_T(P) = \inf_{n \in \NN} \big\{ \phi^{(n)}(P) \in T \big\}. \]
    When the set $T$ is understood, we will simply denote the depth of $P$ by $d(P)$.
\end{definition}

One can readily see that if a connected component contains periodic elements, then they lie in a single cycle, whose vertices are roots of (possibly empty) directed trees.
In the following sections, we will see that these rooted directed trees, also called \emph{arborescences}, are remarkably regular for Hessian graphs.


\subsection{Lattès maps}

Understanding the structure of functional graphs defined by rational functions over a field $\K$ is, in general, a challenging task.
However, \textcolor{black}{special} families of functions are better understood and enjoy a remarkably regular structure, \textcolor{black}{
namely, the \emph{finite quotients of afﬁne maps}: Lattès, Chebyshev and power maps \cites{milnor2006lattes}.
Those are the maps arising as finite projections of (affine) group morphisms, from which they inherit a symmetric dynamics~\cites[§6.8]{silverman2007arithmetic}{byszewski2019dynamAffMaps}.}
In this paper, we focus on Lattès maps, which we define as in \cites[§6.4]{silverman2007arithmetic}. \textcolor{black}{More results on these maps can be found in~\cites{milnor2006lattes}[§6.4-7]{silverman2007arithmetic}{Pakovich2020}.}

\begin{definition} \label{defn:Lattes}
    Let 
    $\phi\colon \P^1(\textcolor{black}{\K}) \rightarrow \P^1(\textcolor{black}{\K})$ be a rational map of degree $d \geq 2$, \textcolor{black}{and let $\overline{\phi}: \P^1(\kbar) \to \P^1(\kbar)$ denote its base change to $\kbar$}.
    We say that $\phi$ is a \emph{Lattès map} if there is an elliptic curve $E$ defined over \textcolor{black}{$\K$}, a curve morphism $\psi \colon E \rightarrow E$, and a finite separable covering $\pi \colon E \rightarrow \P^1 $ such that the following diagram is commutative.
   \begin{equation}
   \label{eqn:lattesDiag}
        \begin{tikzcd}
E(\kbar) \arrow{r}{\psi} \arrow{d}{\pi} & E(\kbar) \arrow{d}{\pi} \\
\P^1(\kbar) \arrow{r}{\textcolor{black}{\overline{\phi}}} & \P^1(\kbar) 
\end{tikzcd}
   \end{equation}
   \textcolor{black}{Moreover:
   \begin{itemize} \item If there is a non-trivial subgroup $\Gamma \subseteq \Aut(E)$ such that the covering is given by the canonical projection as
   \[ \pi: E \to E/\Gamma \xrightarrow{\simeq} \P^1, \]
   then \cref{eqn:lattesDiag} is called a \emph{reduced Lattès diagram} for $\phi$.
   \item If \cref{eqn:lattesDiag} may be chosen such that $\psi$ is the map $P \mapsto [m]P + T$ for some $m \in \ZZ$ and $T \in E$, and $\deg(\pi) = 2$, then $\phi$ is called \emph{flexible}. Otherwise, it is called \emph{rigid}.
\end{itemize}}
\end{definition}

\textcolor{black}{\begin{remark} \label{rmk:CMforRigid}
    When $E$ is given by a (short) Weierstrass model, any map $\phi$ fitting in a reduced Lattès diagram is conjugated (over $\kbar$) to a Lattès map whose projection is given by
    \[ \pi(x,y) = \begin{cases} 
    x & \textnormal{if } |\Gamma| = 2, \\
    x^2 & \textnormal{if } |\Gamma| = 4, \\
    y & \textnormal{if } |\Gamma| = 3, \\
    x^3 & \textnormal{if } |\Gamma| = 6. 
    \end{cases}\]
    as described in \cite[Prop.\,6.37]{silverman2007arithmetic}.
    In particular, for every $m \in \ZZ$, the projection of $[m] \in \End(E)$ on its first coordinate always yields a reduced diagram for a flexible Lattès map~\cite[§6.5]{silverman2007arithmetic}.
    Similarly, since the degree of a flexible Lattès map is an integer square \cite[Prop. 6.51-(a)]{silverman2007arithmetic}, one can obtain reduced diagrams of rigid Lattès maps by projecting on the first coordinate any $\psi \in \End(E)$ with non-square degree, which exists only if $\ZZ \subsetneq \End(E)$. 
    The latter always occurs over any finite fields, while in $\Char(\K) = 0$ it holds precisely when $E$ is a CM-curve \cite[Rmk. III.4.3]{silv:arithEll}.
\end{remark}}


We now briefly consider the case $\K=\C$.
\textcolor{black}{In this case, every Lattès map fits into a reduced Lattès diagram \cite[Thm. 6.57]{silverman2007arithmetic}. Moreover,} there is a practical criterion to check whether a rational map $\phi : \P^1(\C) \rightarrow \P^1(\C)$ is Lattès, \textcolor{black}{which we briefly recall here}.
The \emph{ramification index} of $\phi$ at $P \in \P^1(\C)$ is defined as
\[ e_P(\phi) = \ord_P\big(\phi(x)-\phi(P)\big). \]
This quantity can be explicitly computed even when $P = \infty$ or $\phi(P) = \infty$, by changing the considered affine chart.
A point $P \in \P^1(\C)$ is called a \emph{critical point} for $\phi$ if $e_P(\phi)\geq 2$. The points of the form $\{ \phi^{(n)}(P) \}_{n \in \NN_{>0}}$, where $P$ is a critical point, are called \emph{post-critical}.

It is known that a finite subset $T \subseteq \P^1(\C)$ such that $\phi^{-1}(T) = T$ may contain at most two elements \cite[Thm.\,1.6]{silverman2007arithmetic}.
Since $\phi$ is surjective, $\phi^{-1}(T) = T$ implies $T = \phi(T)$, hence $\phi$ acts as a permutation on $T$.
The maximal such $T$ is called the set of \emph{exceptional points} of $\phi$.
\begin{theorem}
\label{thm:lattesCrit}
    A rational map $\phi \colon \P^1(\C) \rightarrow  \P^1(\C)$ is a Lattès map if and only if it has no exceptional points and there exists a \emph{ramification function} $v \colon  \P^1(\C) \rightarrow \NN_{>0}$ such that $v\big(\phi(P)\big)=e_P(\phi) v(P)$ for all~$P\in \P^1(\C)$.
\end{theorem}
\begin{proof}
    It follows from \cite[Thm.\,4.1]{milnor2006lattes} and the classification of finite quotients of affine maps given in \cite[§2]{milnor2006lattes}.
\end{proof}


\subsection{Inverse image of \texorpdfstring{$\Hess$}{H}} \label{subsec:InvImHess}

In this subsection, we investigate the fibers of $\Hess$, i.e., for every $j' \in \P^1(\K)$ we consider the set
\[ \Hess^{-1}(j') = \{j \in \P^1(\K) \ | \ \Hess(j) = j' \}. \] 
We already observed in \Cref{prop:HessofE} and \Cref{rem:degenerate} that $\Hess^{-1}(\infty) = \{0,\infty\}$.
The fibers of $0$ and $1728$ are independent of the field $\K$, as it was also noticed in \Cite[Prop.\,4.1]{popescuPampu2008:iterHess}.
Moreover, we prove that the fibers of $j' \not\in \{ 0,1728,\infty \}$ effectively correspond to the $\K$-rational points of order $2$ of any elliptic curve $E_{(j')}$ with that $j$-invariant.

    

\begin{lemma}
    \label{lem:multRoots}
    For every ${j'} \in \K$, we define
    \[H_{j'}(j) = j^2(\Hess(j)-{j'}) \in \K[j].\]
    \begin{enumerate}
        \item \label{itm:HjSimpleRoots} If $j'\notin \{0,1728\}$, then $H_{j'}$ has only simple roots (if any).
        \item \label{itm:Hj=0} If $j'=0$, then
        \[ H_{0}(j)=-\frac{1}{27}(j-6912)^3. \] 
        \item \label{itm:Hj=1728} If $j'=1728$, then
        \[ H_{1728}(j)=-\frac{1}{27}(j-1728)(j+13824)^2. \]
    \end{enumerate}
\end{lemma}    
  
    \begin{proof}
        Cases \ref{itm:Hj=0} and \ref{itm:Hj=1728} follow from explicit calculations.
        To check for multiple roots, we consider the formal derivative
        \[ H_{j'}'(j) = -\frac{1}{9} (j^{2} + 18 j{j'} - 13824 j + 47775744). \]
        The above polynomial vanishes if and only if
        \[ {j'}=\frac{-j^{2} + 13824 j - 47775744}{18 j} \ \implies \ H_{j'}(j) = \frac{1}{54} (j-6912)^2 (j+13824). \]
        Thus, we may have $H_{j'}(j) = 0 = H_{j'}'(j)$ (i.e., multiple roots of $H_{j'}$) only if $j = 6912$ or $j = -13824$, whose corresponding $j'$ are $0$ (case \ref{itm:Hj=0}) and $1728$ (case \ref{itm:Hj=1728}), respectively.
    \end{proof}

\begin{proposition} \label{prop:j2tors}
    Let $j \in \K \setminus \{0,1728\}$ and $E_{(j)} : y^2 = x^3 + Ax + B$ be any elliptic curve with that $j$-invariant, with $A,B \in \K$.
    Then the following is a well-defined \textcolor{black}{bijection}:
    \[ 
    \{(\alpha,0) \in E_{(j)}(\K)\} \to \Hess^{-1}(j), \qquad (\alpha,0) \mapsto 6912 \frac{A+3\alpha^2}{4A+3\alpha^2}. \]
\end{proposition}
\begin{proof}
    We prove that the above map is well-defined: given an order-$2$ point $(\alpha,0) \in E_{(j)}(\K)$, we have 
    \[ B = -\alpha^3-A\alpha \quad \implies \quad j = 6912\frac{A^3}{(A + 3 \alpha^2)^2 (4 A + 3\alpha^2)}, \] 
    and a straightforward computation shows that
    \[ \Hess\left( 6912 \frac{A+3\alpha^2}{4A+3\alpha^2} \right) = j.\]
    
    We now prove that this map is injective: $(\alpha,0), (\beta,0) \in E_{(j)}(\K)$ have the same image if and only if
    \[ 6912 \frac{A+3\alpha^2}{4A+3\alpha^2} = 6912 \frac{A+3\beta^2}{4A+3\beta^2} \quad \iff \quad 3A(\alpha^2 - \beta^2) = 0. \]
    Since $j \neq 0$, then $A \neq 0$ so the above is equivalent to $\alpha = \pm \beta$.
    However, $(\alpha,0)$ and $(-\alpha,0)$ cannot be both points of $E_{(j)}$, since $j \neq 1728$ implies $B \neq 0$.

    If $\K$ were algebraically closed, both domain and codomain would have precisely three elements by \Cref{lem:multRoots}, hence the considered map would be $1$-to-$1$.
    Therefore, it is sufficient to show that $\alpha^2 \in \K$ implies $\alpha \in \K$.
    This follows by
    \[ 0 = \alpha^3 + A\alpha + B \implies \alpha (\alpha^2+A) = -B, \]
    noticing that $B \neq 0$ implies $\alpha^2+A \neq 0$, therefore $\alpha = -\frac{B}{\alpha^2+A}$.
\end{proof}

\textcolor{black}{There are always curves with $j$-invariant $0$ and $1728$ in the image of $\Hess$ by \Cref{lem:multRoots}, but their order-$2$ $\K$-rational points depend on the chosen twist. We refer the reader to \Cref{App:twists} for a complete analysis of these cases.
For every other $j$-invariant,} \Cref{prop:j2tors,prop:hessianIsom} show that the image of the Hessian map misses precisely the elliptic curves without $\K$-rational order-$2$ points.
Hence, these curves correspond to the \emph{leaves} of Hessian graphs, namely the elements with empty Hessian fiber.
Since the $2$-torsion is always defined over algebraically closed fields, every complex elliptic curve belongs to the Hessian image, as noticed in \cite[Prop.\,5.15]{cilibertoOttaviani:hessianMap}.
\textcolor{black}{Conversely, over finite fields, being a leaf only depends on the parity of the trace (see \Cref{lemma:trace2})}.

We conclude the section by listing the loops of the Hessian graphs $\fungraph{\P^1(\K)}{\Hess}$, namely the elements $j \in \P^1(\K)$ such that $\Hess(j) = j$.

\begin{lemma} \label{lem:loopj}
    Let $j \in \P^1(\K)$. Then $\Hess(j) = j$ if and only if
    \[ j \in \begin{cases}
        \{1728, \infty, \frac{ 2^73^3 }{7} (\pm 3 \sqrt{-3} - 1)\}, &\textnormal{if } {7 \nmid \mathsf{char}(\K)}\textnormal{ and } 3 \in (\K^*)^2, \\
        \{1728, \infty \}, &\textnormal{if } {7 \nmid \mathsf{char}(\K)}\textnormal{ and } 3 \not\in (\K^*)^2, \\
        \{1728, \infty, 4 \}, &\textnormal{if } 7 \ | \ \mathsf{char}(\K). 
    \end{cases}\]
\end{lemma}
\begin{proof}
    We observed in \Cref{rem:degenerate} that $\Hess(\infty)=\infty$, then we can restrict to $j \in \K$.
    For such points, $\Hess(j)=j$ if and only if
    \[ 0 = -28j^3 + 20736j^2 - 143327232j + 330225942528 = 4(j-1728)(-7j^2 - 6912j - 47775744). \]
    If $7 \nmid \mathsf{char}(\K)$, the roots of the above polynomial over $\kbar$ are $\{1728, \frac{ 2^73^3 }{7} (\pm 3 \sqrt{-3} - 1)\}$.
    Conversely, if $7 \mid \mathsf{char}(\K)$, then
    \[ -7x^2 - 6912x - 47775744 = 4x+5 \in \K[x], \]
    therefore the possible values of $j$ are $1728$ and $4 = -\frac{5}{4} \in \K$.
\end{proof}


\section{The Hessian as a Lattès map}
\label{sec:HessianIsLattes}

In this section, we show that the Hessian transformation $\Hess$ is a Lattès map.
\textcolor{black}{We first show that, over $\C$, this result can be proved with the general criterion of \Cref{thm:lattesCrit}.
Notably, the complex Lattès structure of the Hessian was already claimed in~\cite[Rmk.\,5.2]{popescuPampu2008:iterHess}.
However, these arguments provide little insight into the behavior of $\Hess$ on a field $\K$ that is not algebraically closed and/or has positive characteristic.
To fill this gap, in the following sections, we provide an explicit \emph{reduced} Lattès model for $\Hess$ over an arbitrary field, which we will leverage to understand its dynamics.}

\subsection{\textcolor{black}{Complex Lattès structure}}


\begin{proposition}
\label{Sec3:prop-Lattes-C}
    The map $\Hess \colon \P^1(\C) \rightarrow \P^1(\C)$, defined as in \cref{eq:Hessj}, is a Lattès map.
\end{proposition}
\begin{proof}
    By \Cref{lem:multRoots} and \cite[Thm.\,1.6]{silverman2007arithmetic}, one can check that $\Hess$ cannot have exceptional points. 
    Moreover, the first and second derivatives of $\Hess$ are
    \[ H'(j) = - \frac{(j + 13824) (j - 6912)^{2}}{27j^3}, \quad H''(j) = -2^{17}3^4 \frac{j-6912}{j^4}. \]
    Hence, $-13824$ and $6912$  are critical points with $e_{-13824}(\Hess)=2$ and $e_{6912}(\Hess)=3$.
    To check the values $0$ and $\infty$, we consider the \emph{linear conjugate}~\cite[§1.1]{silverman2007arithmetic} of $\Hess$ after $f \colon j \mapsto 6912j /  (j+6912)$:
    \[ \tilde{\Hess}(j) = ( f^{-1}\circ \Hess \circ f)(j) = \frac{2^{32}3^9}{j^{3} + 6912 j^{2} - 2^{24}3^6}, \]
    whose first and second derivatives are
    \[ \tilde{\Hess}'(j) = -\frac{2^{32}  3^{10} j  (j + 2^9 3^2)}{(j^{3} + 6912 j^{2} - 2^{24}3^{6})^{2}}, \quad \tilde{\Hess}''(j) = \frac{2^{34}  3^{10}  (j^4 + 2^{10}3^2j^3 + 2^{15} 3^6j^2 +2^{23}3^6j +2^{31}  3^8)}{(j^3 + 6912j^2 -2^{24} 3^6)^3}.  \]
    Thus, we see that $-6912 = f^{-1}(\infty)$ is not critical for $\tilde{\Hess}$, while $0 = f^{-1}(0)$ is critical with ramification $e_{0}(\tilde{\Hess})=2$.
    Therefore, we conclude that
    \[ e_j(\Hess) = \begin{cases}
        3 & \textnormal{if } j = 6912, \\
        2 & \textnormal{if } j \in \{0, -13824 \}, \\
        1 & \textnormal{otherwise}.
    \end{cases} \]
The thesis follows immediately by \Cref{thm:lattesCrit}, checking that the map 
\[v:\P^1(\C) \to \NN_{>0}, \quad j \mapsto v(j)=\begin{cases}
    2 &\textnormal{if }j=1728,\\
    3 &\textnormal{if }j=0,\\
    6 &\textnormal{if }j=\infty,\\
    1 &\textnormal{otherwise},
\end{cases}\]
is a ramification function.
\end{proof}

The above proof shows how \Cref{thm:lattesCrit} allows us to prove that a complex rational map is Lattès by only inspecting its \emph{post-critical portrait}, namely its behavior on the orbits of its critical points.
The post-critical portrait of $\Hess$ is shown in \Cref{fig:critPort}.
\begin{figure}[ht]
    \centering
   \begin{tikzpicture}
    \node (A) at (0, 0.5) {$-13824$};
    \node (B) at (0, -0.5) {1728};
    \node (C) at (2, 0) {0};
    \node (D) at (2, -1) {$\infty$};
    \node (E) at (2, 1) {$6912$};

    \draw[-latex, bend left=35] (A) to (B);
    \draw[-latex, bend right=35] (A) to (B);
    
    \draw[-latex, bend left=45] (E) to (C);
    \draw[-latex, bend right=45] (E) to (C);
    \draw[-latex] (E) -- (C);

    \draw[-latex, bend left=35] (C) to (D);
    \draw[-latex, bend right=35]  (C) to (D);

    \draw[-latex, looseness=4, out=-135, in=-45] (B) to (B);
    \draw[-latex, looseness=4, out=-135, in=-45] (D) to (D);
\end{tikzpicture}
\caption{Post-critical portrait of $\Hess$. The arrow multiplicity corresponds to the origin's ramification index.}
\label{fig:critPort}
\end{figure}

\begin{remark}
    \Cref{fig:critPort} indicates that $\Hess$ falls within the classification of degree-$3$ complex Lattès maps given in \cite[§8.2]{milnor2006lattes}, as a M{\"o}bius transformation of \cite[eq.\ (17)]{milnor2006lattes}.
\end{remark}

\subsection{The projective Hessian} \label{subsec:Fkl}

\begin{definition}
    For any given $k \in \K^*$, we define the projective maps
    \begin{align*}
        \Hess_k &: \P^1(\K) \to \P^1(\K), \quad [u:v]\mapsto[(u+kv)^3 : -27 u^2v], \\
        \Lambda_k &: \P^1(\K) \to \P^1(\K), \quad [u:v] \mapsto [ u^3 + k v^3 : -3u^2v ].
    \end{align*} 
\end{definition}
Since $k$ is invertible, then both $\Hess_k$ and $\Lambda_k$ are well-defined at every point of $\P^1(\K)$.

\begin{remark} \label{rmk:psiHess}
    The Hessian transformation $\Hess$ (at the level of $j$-invariants) defined in \cref{eq:Hessj} is equal to $\Hess_{-6912}$, while the Hessian  transformation $\Lambda$ (on the Hesse pencil) defined in \cref{eqn:lambda} corresponds to $\Lambda_{108}$.
\end{remark}


\begin{definition}
    For any given $h \in \K^*$, we define the \emph{scaling map} as
    \[ \Phi_{h} : \P^1(\K) \to \P^1(\K), \quad [u:v] \mapsto [hu : v]. \]
\end{definition}

Since $h$ is invertible, then $\Phi_{h}$ is a well-defined projective coordinate change of $\P^1(\K)$. 

\begin{proposition}
\label{prop:FkFamily}
   For every $h,k \in \K^*$, we have
   \[ \Hess_{hk} \circ \Phi_{h} = \Phi_{h} \circ \Hess_k \quad \textnormal{and} \quad \Lambda_{h^3k} \circ \Phi_{h} = \Phi_{h} \circ \Lambda_k. \]
\end{proposition}
\begin{proof} For every $[u:v] \in \P^1(\K)$, we have
\[  \Hess_{hk}\big(\Phi_{h}([u:v])\big) = [ (hu+khv)^3 : -27 h^2u^2v ] = [h(u+kv)^3 : -27 u^2v] = \Phi_{h}\big(\Hess_k([u:v]) \big), \]
and 
\[  \Lambda_{h^3k}\big(\Phi_{h}([u:v])\big) = [ h^3u^3+h^3kv^3 : -3 h^2u^2v ] = [h(u^3+kv^3) : -3 u^2v] = \Phi_{h}\big(\Lambda_k([u:v]) \big), \]
from which the proposition follows.
\end{proof}

\begin{corollary} \label{cor:FkFamily}
   The functional graphs $\big\{ \fungraph{\P^1(\K)}{\Hess_k} \big\}_{k \in \K^*}$ are all isomorphic.
\end{corollary}
\begin{proof}
    By applying \Cref{prop:FkFamily} with $h = k^{-1}$, we have
    \[ \Hess_k = \Phi_{k} \circ \Hess_1 \circ \Phi_{k}^{-1}, \]
    namely the functional graph 
    of $\Hess_k$ is isomorphic to that of $\Hess_1$, independently of $k \in \K^*$.
\end{proof}
\begin{remark}
    $\big\{ \fungraph{\P^1(\K)}{\Lambda_k} \big\}_{k \in \K^*}$ are not isomorphic in general, unless $\K = \K^3$.
\end{remark}


\begin{definition} \label{defn:cubing}
    We define the \emph{cubing map} as
    \[ M_3 : \P^1(\K) \to \P^1(\K), \quad [u:v] \mapsto [u^3:v^3]. \]
\end{definition}

\begin{proposition} \label{prop:commuting}
    Let $k \in \K^*$. Then
    \[ \Hess_k \circ M_3 = M_3 \circ \Lambda_k. \]
\end{proposition}
\begin{proof} By definition we have
\[ \Hess_k\big(M_3( [u:v] )\big) = \Hess_k([u^3:v^3]) = [(u^3+kv^3)^3 : -27 u^6v^3], \]
while
\[ M_3\big( \Lambda_k( [u:v] ) \big) = M_3( [u^3+kv^3 : -3u^2v] ) = [(u^3+kv^3)^3 : (-3u^2v)^3], \]
from which equality follows. 
\end{proof}

\Cref{prop:FkFamily,prop:commuting} show that the diagram of \Cref{fig:CommCubeBase} is commutative. Each vertical arrow of such diagram represents $M_3$.

\begin{figure}[H]
\centering\begin{tikzpicture}
\node[] (P1A) at (0,-2) {$\P^1$};
\node[] (P1B) at (4,-2) {$\P^1$};

\node[] (P1uA) at (1.5,-3) {$\P^1$};
\node[] (P1uB) at (5.5,-3) {$\P^1$};

\node[] (P1uuA) at (1.5,-5) {$\P^1$};
\node[] (P1uuB) at (5.5,-5) {$\P^1$};
\node[] (P1uuC) at (0,-4) {$\P^1$};
\node[] (P1uuD) at (4,-4) {$\P^1$};


\draw[-latex] (P1A)--(P1B) node[midway,above] () {$\Lambda_k$};
\draw[-latex] (P1uA)--(P1uB) node[midway,above] () {$\Lambda_{h^3k}$};

\draw[-latex] (P1A)--(P1uA) node[midway,xshift=-0.1cm,yshift=-0.2cm] () {$\Phi_{h}$};
\draw[-latex] (P1B)--(P1uB) node[midway,xshift=0.15cm,yshift=0.2cm] () {$\Phi_{h}$};

\draw[-latex] (P1uA)--(P1uuA) node[midway,above] () {};
\draw[-latex] (P1uB)--(P1uuB) node[midway,right] () {}; 
\draw[-latex] (P1uuA)--(P1uuB) node[midway,above] () {$\Hess_{h^3k}$};

\draw[-latex] (P1A)--(P1uuC) node[midway,left] () {$M_3$};
\draw[-latex] (P1uuC)--(P1uuA) node[midway,xshift=-0.1cm,yshift=-0.2cm] () {$\Phi_{h^3}$};

\draw[-latex] (P1uuC)--(P1uuD) node[midway,above] () {$\Hess_k$};
\draw[-latex] (P1uuD)--(P1uuB) node[midway,xshift=0.15cm,yshift=0.2cm] () {$\Phi_{h^3}$};
\draw[-latex] (P1B)--(P1uuD) node[midway,above] () {};
\end{tikzpicture}
\caption{Commuting diagram arising from \Cref{prop:FkFamily,prop:commuting}.}
\label{fig:CommCubeBase}
\end{figure}
\begin{remark} \label{rmk:lambdainj}
By specializing the above results with $k = 108$ and $h=-4$, one sees that
\[ M_3 \circ \Phi_{-4} : \P^1(\K) \to \P^1(\K), \quad [u:v] \mapsto [-(4u)^3:v^3] \]
maps $\fungraph{\P^1(\K)}{\Lambda}$ into $\fungraph{\P^1(\K)}{\Hess}$.
When the cubing operation is invertible over $\K$, this is a functional graph isomorphism. 
\end{remark}

\begin{remark} \label{rmk:constantmatters}
In this work, we will need to keep track of the isomorphisms $\Phi_h$, hence we maintain the dependency on~$k$.
However, \Cref{cor:FkFamily} shows that, from a dynamical point of view, the maps $\{\Hess_k\}_{k \in \K^*}$ are indistinguishable.
On the other hand, the constant $-27$ in the definition of $\Hess_k$ plays a crucial role from a dynamical perspective. In fact, the functional graphs of the maps $\{ \Phi_h \circ \Hess_k \}_{h \in \K^*}$, which are obtained by simply scaling one entry of $\Hess_k$, do not display the symmetric structure of $\Hess_k$.
For instance, in Appendix~\ref{App:examples} we compare the Hessian graph $\fungraph{\P^1(\F_{17})}{\Hess_{-6912}}$ 
(\Cref{fig:hessian17}) with the functional graph
$\fungraph{\P^1(\F_{17})}{\Phi_{\frac{27}{8}} \circ \Hess_{-6912}}$ 
(\Cref{fig:randomFuncGraph}).
\end{remark}

\subsection{The curve \texorpdfstring{$E_k$}{Ek} and the endomorphism \texorpdfstring{$\psi_k$}{ψk}} \label{subsec:Ek}

\begin{definition} \label{defn:KK}
    Let $k \in \K^*$. We denote by $E_k$ the elliptic curve defined over $\K$ by
    \begin{equation*}
        y^2 = x^3 + \frac{k}{4}.
    \end{equation*}
    We also denote by $T_k$ the $3$-torsion point $ \big(0,\frac{\sqrt{k}}{2}\big) \in E_k$, for any fixed choice of $\sqrt{k} \in \kbar$.
\end{definition}

\textcolor{black}{Since $E_k$ has $j$-invariant $0$, it is well-known that $\Aut(E_k) \simeq \mu_6$ ~\cite[§III.10]{silv:arithEll}, where $\mu_n$ denotes the group of $n$-th roots of unity in $\K^*$.
Letting $\rho : (x,y) \mapsto (\z x, -y)$ be a generator of $\Aut(E_k)$, we have a ring embedding
\begin{equation} \label{eq:embedding}
    \ZZ[\zeta_6] \to \End(E_k), \quad \zeta_6 \mapsto \rho,
\end{equation}
where $\zeta_6 = -\zeta_3$.
Hence, $\End(E_k)$ is strictly larger than $\ZZ$.
If $\Char(\K) = 0$, this means that $E_k$ has CM by $\ZZ[\zeta_6]$.
In positive characteristic, $\End(E_k)$ may be larger when $E_k$ is \emph{supersingular}.
We now consider the endomorphism corresponding to $\sqrt{-3}$ under the embedding of \cref{eq:embedding}.
}


\begin{definition} \label{defn:psik}
    \textcolor{black}{Let $k \in \K^*$. We define $\psi_k = [2]\rho- [1] \in \End(E_k)$.}
\end{definition}


\begin{lemma} \label{lem:psi2}
    For every $k \in \K^*$, we have $\psi_k \circ \psi_k = [-3] \in \End(E_k)$.
\end{lemma}
\textcolor{black}{\begin{proof}
    Since $\rho^2 = \rho-[1]$, we have $\psi_k \circ \psi_k = ([2] \rho-[1])^2 = [4](\rho-[1]) - [4]\rho + [1] = [-3]$.
\end{proof}}

\begin{lemma} \label{lem:psik}
    \textcolor{black}{For every $k \in \K^*$, the map $\psi_k$ is a degree-$3$ endomorphism of $E_k$, explicitly given by
     \[ \psi_k (x,y) = \begin{cases}
        \left( -\frac{x^3+k}{3x^2}, -y\frac{x^3-2k}{3\sqrt{-3}x^3} \right) \quad &\textnormal{if } (x,y) \neq \pm T_k, \\
        \0 &\textnormal{otherwise.}
    \end{cases} \]}
\end{lemma}
\begin{proof} The above map is the composition of the degree-$3$ isogeny $E_{k} \to E_{-27k}$ of kernel $\{ \0, \pm T_k \}$ described in~\cite[§2.1]{cohenPazuki:3descent} with the curve isomorphism $E_{-27k} \simeq E_{k}$, which is defined (over $\overline{\K}$) by $(x,y) \mapsto \big(\frac{x}{-3}, \frac{y}{-3\sqrt{-3}}\big)$. \textcolor{black}{The equality with $[2]\rho- [1]$ is checked symbolically by the MAGMA \cite{magma} script available at \cite{OurRepo}.}
\end{proof}

\subsection{Lattès structure} \label{subsec:Hess}

In this section, we show that both $\Hess_k$ and $\Lambda_k$ can be \textcolor{black}{lifted to} $\psi_k$ over $E_k$.

\begin{definition} \label{defn:pi}
    \textcolor{black}{For a given elliptic curve $E(\kbar) \subset \P^2(\kbar)$,} we define the projections
    \[ \pi_{\Lambda} : \textcolor{black}{E}(\kbar) \to \P^1(\kbar), \quad [x:y:z] \mapsto \begin{cases} [x:z] &\textnormal{if } [x:y:z] \neq \0,\\
    [1:0] &\textnormal{otherwise}.
\end{cases}  \]
and
    \[ \pi_{\Hess} = M_3 \circ \pi_{\Lambda} :  \textcolor{black}{E}(\kbar) \to \P^1(\kbar), \quad [x:y:z] \mapsto \begin{cases} [x^3:z^3] &\textnormal{if } [x:y:z] \neq \0,\\
    [1:0] &\textnormal{otherwise}.
\end{cases}  \]
\end{definition}

\begin{remark} \label{rmk:ProjAsQuot}
    \textcolor{black}{We notice that $\pi_{\Lambda}$ and $\pi_{\Hess}$ do not depend on the coefficients of $E$, as they may be equally defined over the whole $\P^2(\kbar)$, and further restricted to the given $E$.}
    When \textcolor{black}{this curve is $E_k$ (namely, $j(E) = 0$)}, these projections correspond to
\[   
    \pi_{\Lambda} : E_k \to E_k / \langle \textcolor{black}{[-1]} \rangle \simeq \P^1, \quad
    \pi_{\Hess} : E_k \to E_k / \langle \textcolor{black}{\rho} \rangle \simeq \P^1,
\]
where \textcolor{black}{$\rho (x,y) = (\z x, -y)$} is the generator of $\Aut(E_k)$. 
\end{remark}


\begin{theorem} \label{thm:projectphi3}
    Let $k \in \K^*$. 
    \textcolor{black}{Then, the following}
    \[\begin{tikzcd}
E_k(\overline{\K}) \arrow{r}{\psi_k} \arrow{d}{\pi_{\Lambda}} & E_k(\overline{\K}) \arrow{d}{\pi_{\Lambda}} \\
\P^1(\overline{\K}) \arrow{r}{\Lambda_k} & \P^1(\overline{\K}) 
\end{tikzcd} \qquad
\begin{tikzcd}
E_k(\overline{\K}) \arrow{r}{\psi_k} \arrow{d}{\pi_{\Hess}} & E_k(\overline{\K}) \arrow{d}{\pi_{\Hess}} \\
\P^1(\overline{\K}) \arrow{r}{\Hess_k} & \P^1(\overline{\K}) 
\end{tikzcd}\]
    \textcolor{black}{are reduced Lattès diagrams for $\Lambda_k$ and $\Hess_k$.}
\end{theorem}

\begin{proof} 
\textcolor{black}{Since the projections $\pi_{\Lambda}$ and $\pi_{\Hess}$ arise from the projections by (finite) subgroups of $\Aut(E_k)$ (\Cref{rmk:ProjAsQuot}), it is sufficient to show that the above diagrams are commutative. 
}
If $P \in \{\0, \pm T_k\}$, then we have $\Lambda_k \circ \pi_{\Lambda}(P) = [1:0] = \pi_{\Lambda} \circ \psi_k (P)$.
Otherwise, $P = (x,y) \in E_k(\kbar)$ with $x \neq 0$, and we have
\[ \Lambda_k \circ \pi_{\Lambda}(P) = \Lambda_k([x:1]) = [x^3+k : -3x^2 ] = \left[ -\frac{x^3+k}{3x^2} : 1 \right] = \pi_{\Lambda} \circ \psi_k(P). \]
This proves that the first diagram is commutative, which implies the second by post-composing $\pi_{\Lambda}$ with $M_3$ and using \Cref{prop:commuting}.
\end{proof}

\textcolor{black}{\begin{corollary} \label{cor:rigidlattes}
    For any $k \in \K^*$, the maps $\Lambda_k$ and $\Hess_k$ are rigid Lattès maps.
\end{corollary}
\begin{proof}
    By \Cref{thm:projectphi3}, both $\Lambda_k$ and $\Hess_k$ are degree-$3$ Lattès maps.
    Since the degree of flexible Lattès maps is $m^2$ by \cite[Prop. 6.51-(a)]{silverman2007arithmetic}, for some $m \in \ZZ$, then they are both rigid.
\end{proof}}

We also recall that the curves $\{E_k\}_{k \in \K^*}$ are isomorphic over $\overline{\K}$, since they all have $j$-invariant $0$.
Such isomorphisms are given, for suitable $u \in \overline{\K}^*$, by 
\[ \phi_u : E_k \to E_{u^6k}, \quad (x,y) \to (u^2x,u^3y). \]
We now prove that these isomorphisms respect the considered $3$-endomorphisms $\psi_k$.

\begin{lemma} \label{lem:comDiag}
    For every $k \in \K^*$ and $u \in (\kbar)^*$, we have
    \[ \psi_{u^6k} \circ \phi_u = \phi_u \circ \psi_k. \]
\end{lemma}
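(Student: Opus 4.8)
The plan is to verify the identity $\psi_{u^6k} \circ \phi_u = \phi_u \circ \psi_k$ by direct computation, checking it on all three types of points: the point at infinity, the affine nontrivial $3$-torsion points $\pm T_k$, and a generic affine point $(x,y) \in E_k$ outside the kernel of $\psi_k$.

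First I would dispose of the easy cases. Since $\phi_u(\0) = \0$ and $\psi_{k}(\0) = \0$ by definition (and likewise for $\psi_{u^6k}$), both sides send $\0$ to $\0$. For the $3$-torsion points, note that $\phi_u(T_k) = \phi_u\big(0,\tfrac{\sqrt k}{2}\big) = \big(0, u^3\tfrac{\sqrt k}{2}\big) = \big(0, \tfrac{\sqrt{u^6 k}}{2}\big) = T_{u^6k}$ (up to the choice of square root, which is consistent since $u^3 \sqrt{k}$ is a square root of $u^6 k$), and similarly $\phi_u(-T_k) = -T_{u^6k}$; since $\psi_k$ kills $\pm T_k$ and $\psi_{u^6k}$ kills $\pm T_{u^6k}$, again both composites vanish there. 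So the content is entirely in the generic affine case.

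For a generic $(x,y) \in E_k$ with $x \neq 0$, I would compute both sides using the explicit formula for $\psi_k$ from \Cref{lem:psik}. On the left, $\phi_u(x,y) = (u^2 x, u^3 y)$, which lies on $E_{u^6 k}$, and then $\psi_{u^6k}$ applied to it has first coordinate $-\dfrac{(u^2x)^3 + u^6 k}{3(u^2x)^2} = -\dfrac{u^6(x^3+k)}{3u^4 x^2} = u^2\Big({-}\dfrac{x^3+k}{3x^2}\Big)$, and second coordinate $-u^3 y \cdot \dfrac{(u^2x)^3 - 2u^6 k}{3\sqrt{-3}\,(u^2x)^3} = -u^3 y\cdot\dfrac{u^6(x^3-2k)}{3\sqrt{-3}\,u^6 x^3} = u^3\Big({-}y\dfrac{x^3-2k}{3\sqrt{-3}\,x^3}\Big)$. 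On the right, $\psi_k(x,y) = \Big({-}\dfrac{x^3+k}{3x^2},\, {-}y\dfrac{x^3-2k}{3\sqrt{-3}\,x^3}\Big)$ and applying $\phi_u$ multiplies the two coordinates by $u^2$ and $u^3$ respectively, yielding exactly the same pair. This matches, so the identity holds on all of $E_k$.

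The computation is genuinely routine, so there is no serious obstacle; the only point requiring a little care is the bookkeeping of the square root $\sqrt{-3}$ and of $\sqrt{k}$ versus $\sqrt{u^6k}$, to make sure the sign conventions in the definitions of $\psi_k$, $\psi_{u^6k}$, and $T_k$ are used consistently (all the isogenies and isomorphisms involved are defined over $\overline{\K}$, so this is purely a matter of fixing compatible choices). Alternatively, and perhaps more cleanly, one can argue structurally: $\phi_u$ is an isomorphism $E_k \simeq E_{u^6k}$, both $\psi_k$ and $\psi_{u^6k}$ square to $[-3]$ by \Cref{prop:psi2}, and an isomorphism conjugates $[-3]$ to $[-3]$; since the only endomorphisms of $E_k$ squaring to $[-3]$ are $\pm\psi_k$ (as $\End(E_k)\otimes\mathbb{Q} = \mathbb{Q}(\sqrt{-3})$ has exactly two square roots of $-3$), one gets $\phi_u^{-1}\circ\psi_{u^6k}\circ\phi_u = \pm\psi_k$, and checking the action on a single generic point pins down the sign as $+$. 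Either route completes the proof.
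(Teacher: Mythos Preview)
Your proof is correct and follows essentially the same direct-computation approach as the paper: dispose of the kernel points $\{\0,\pm T_k\}$ and then verify the identity on a generic affine point by expanding both sides using the formulas from \Cref{lem:psik}. The structural alternative you sketch at the end is a nice bonus, but the paper sticks to the straightforward verification.
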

\begin{proof}
    For every $P \in \{\0, \pm T_k\}$, we have
    $ \psi_{u^6k} \circ \phi_u(P) = \0 = \phi_u \circ \psi_k(P)$.
    For every other $(x,y) \in E_k$, we have
    \begin{align*}
        \psi_{u^6k} \circ \phi_u (x,y) &= \left(-\frac{u^6x^3+u^6k}{3u^4x^2}, -u^3y\frac{u^6x^3-2u^6k}{3\sqrt{-3}u^6x^3}\right) \\
        &= \left(-u^2\frac{x^3+k}{3x^2}, -u^3y\frac{x^3-2k}{3\sqrt{-3}x^3}\right) = \phi_u \circ \psi_k(x,y).
    \end{align*} 
    Hence, $\psi_{u^6k} \circ \phi_u = \phi_u \circ \psi_k$ for every point of $E_k$.
\end{proof}

\begin{corollary} \label{cor:psiAut}
    For every $k \in \K^*$, $\psi_k$ commutes with every element of $\Aut(E_k)$.
\end{corollary}
\begin{proof}
    \textcolor{black}{
    This follows straightforwardly by \Cref{lem:comDiag}, since 
    $\Aut(E_k) = \langle \rho \rangle$, with $\rho = \phi_{-(\z)^2}$.}
\end{proof}

\begin{remark}
\label{rem:compatibility}
    \textcolor{black}{In general, for any given elliptic curve $E$ over $\K$, an endomorphism $\psi \in \End(E)$ that is \emph{compatible} with $\Gamma \subseteq \Aut(E)$ gives rise to a reduced Lattès diagram by fixing an isomorphism $E/\Gamma \simeq\P^1$.
    By compatible, we mean that for every $\gamma \in \Gamma$ there exists $\gamma' \in \Gamma$ such that 
    \begin{equation} \label{eq:LattèsCond}
        \psi \circ \gamma = \gamma' \circ \psi \in \End(E).
    \end{equation}
    For a finer statement, we refer the reader to \Cref{lem:GammaActionPsi}.
    The above is clearly satisfied when $\End(E)$ is commutative, which is always the case if $\Char(\K)=0$ \cite[Cor. III.9.4]{silv:arithEll} or when $\Char(\K) > 0$ and $E$ is ordinary.
    Furthermore, \cref{eq:LattèsCond} is also satisfied when $\Gamma = \langle [-1] \rangle$, which holds whenever $j(E) \not \in \{0,1728\}$.
    On the other hand, suppose that $j(E) \in \{0,1728\}$, that $E$ is supersingular, and that $\langle[-1]\rangle \subsetneq \Gamma \subseteq \Aut(E)$. Then some endomorphisms of $E$ may nevertheless be compatible with $\Gamma$, as it is the case for $\psi_k$ (\Cref{cor:psiAut}), but this compatibility should not be expected.
    For example, let $\Frob_5 : (x,y) \mapsto (x^5,y^5)$ be the Frobenius endomorphism of the curve $E_4: y^2 = x^3+1$ defined over $\F_{25}$, and $\rho \in \Aut(E)$ as in \cref{eq:embedding}.
    We straightforwardly check \cite{OurRepo} that \cref{eq:LattèsCond} is not satisfied for $\psi = \Frob_5 + [1]$, for any choice of $\gamma' \in \Aut(E)$, hence such a $\psi$ does not fit any reduced Lattès diagram.}
\end{remark}

\begin{figure}[ht]
\centering\begin{tikzpicture}
\node[] (EkA) at (0,0) {$E_k$};
\node[] (EkB) at (4,0) {$E_k$};
\node[] (P1A) at (0,-2) {$\P^1$};
\node[] (P1B) at (4,-2) {$\P^1$};

\node[] (EkuA) at (1.5,-1) {$E_{u^6k}$};
\node[] (EkuB) at (5.5,-1) {$E_{u^6k}$};
\node[] (P1uA) at (1.5,-3) {$\P^1$};
\node[] (P1uB) at (5.5,-3) {$\P^1$};

\node[] (P1uuA) at (1.5,-5) {$\P^1$};
\node[] (P1uuB) at (5.5,-5) {$\P^1$};
\node[] (P1uuC) at (0,-4) {$\P^1$};
\node[] (P1uuD) at (4,-4) {$\P^1$};

\draw[-latex] (EkA)--(EkB) node[midway,above] () {$\psi_k$};
\draw[-latex] (EkuA)--(EkuB) node[midway,above] () {$\psi_{u^6k}$};

\draw[-latex] (EkA)--(EkuA) node[midway,xshift=-0.1cm,yshift=-0.2cm] () {$\phi_{u}$};
\draw[-latex] (EkB)--(EkuB) node[midway,right,yshift=0.1cm] () {}; 

\draw[-latex] (EkA)--(P1A) node[midway,left] () {$\pi_{\Lambda}$};
\draw[-latex] (EkB)--(P1B) node[midway,left] () {};

\draw[-latex] (EkuA)--(P1uA) node[midway,right] () {};
\draw[-latex] (EkuB)--(P1uB) node[midway,right] () {}; 

\draw[-latex] (P1A)--(P1B) node[midway,above] () {$\Lambda_k$};
\draw[-latex] (P1uA)--(P1uB) node[midway,above] () {$\Lambda_{u^6k}$};

\draw[-latex] (P1A)--(P1uA) node[midway,xshift=-0.1cm,yshift=-0.2cm] () {$\Phi_{u^2}$};
\draw[-latex] (P1B)--(P1uB) node[midway,right] () {};

\draw[-latex] (P1uA)--(P1uuA) node[midway,above] () {};
\draw[-latex] (P1uB)--(P1uuB) node[midway,right] () {}; 
\draw[-latex] (P1uuA)--(P1uuB) node[midway,above] () {$\Hess_{u^6k}$};

\draw[-latex] (P1A)--(P1uuC) node[midway,left] () {$M_3$};
\draw[-latex] (P1uuC)--(P1uuA) node[midway,xshift=-0.1cm,yshift=-0.2cm] () {$\Phi_{u^6}$};

\draw[-latex] (P1uuC)--(P1uuD) node[midway,above] () {$\Hess_k$};
\draw[-latex] (P1uuD)--(P1uuB) node[midway,above] () {};
\draw[-latex] (P1B)--(P1uuD) node[midway,above] () {};
\end{tikzpicture}
\caption{Commuting results from \Cref{subsec:Fkl,subsec:Ek,subsec:Hess}.} 
\label{fig:CommCube}
\end{figure}


\section{Functional graphs of prime-kernel group endomorphisms} \label{sec:ConnComp}

\textcolor{black}{
In this section, we prove a structural result on the dynamics of group endomorphisms with a prime kernel (\Cref{thm:structure}).
This result immediately applies to the endomorphism $\psi_k$, whose kernel has $3$ elements by \Cref{lem:psik}.
More generally, it may be applied to study Lattès maps arising from elements of $\End(E)$ corresponding to $\sqrt{\pm \ell}$, for any integer prime $\ell$. However, it would require further refinement to be applied to non-prime-kernel endomorphisms, such as those defining flexible Lattès maps.
An interested reader can find examples of these different dynamics in \cite{DEKLERK2022112691,URIBEVARGAS}. The results in this section are necessary but not sufficient to fully describe the dynamics of a Lattès map over a given non-algebraically closed field $\K$. In fact, it would still be necessary to determine the shape of the fibers of its domain and to understand the action of the projection $\pi$. We will address this in Section~\ref{sec:Sk}.}

\textcolor{black}{Let $G$ be} an additive group, whose identity is denoted by $\0$.
Let also $\phi \in \End(G)$ be an endomorphism of $G$ with finite prime kernel $\ell = \left|\ker \phi\right|$.
It is easy to see that the relation
\begin{equation*} 
    P \sim Q \quad \textnormal{if} \quad \exists \, n,m \in \NN : \phi^{(n)}(P) = \phi^{(m)}(Q)
\end{equation*}
is a well-defined equivalence relation on the points of $G$.
\begin{definition}
    The equivalence classes in $G/{\sim}$ will be referred to as the \emph{connected components} of~$\fungraph{G}{\phi}$.
\end{definition}

We will need the following lemma from group theory, whose proof is reported for completeness.

\begin{lemma} \label{lem:groupchain}
    Let $G$ be a group, $\ell$ be a prime integer, and $\phi \in \End(G)$ be such that $\left|\ker \phi \right| = \ell$. 
    Then the ascending series of groups
    \begin{equation} \label{eq:groupseq}
        \{\0\} = \ker \phi^{(0)} \lneq \ker \phi \leq \ker \phi^{(2)} \leq \dots 
    \end{equation}
    is strictly increasing until it stabilizes.
    Let $m = \inf_{k \in \NN} \big\{\ker \phi^{(k)} = \ker \phi^{(k+1)}\big\}$.
    Then, for every $0 \leq k < m$, we have
    \[ \ker \phi^{(k)} = \phi\big(\ker \phi^{(k+1)}\big). \]
\end{lemma}
\begin{proof}
    We first observe that the series of \cref{eq:groupseq} stabilizes at $\ker \phi^{(m)}$. 
    In fact, for every $k \geq m$, we have
    \[ x \in \ker \phi^{(k+1)} \implies \phi^{(k-m)}(x) \in \ker \phi^{(m+1)} = \ker \phi^{(m)} \implies x \in \ker \phi^{(k)}. \]
    Since $\ker \phi^{(0)} = \{\0\} \neq \ker \phi$, then $m > 0$.
    For every $0 \leq k < m$, we have
    \[ \phi\big( \ker \phi^{(k+1)} \big) \leq \ker \phi^{(k)} \lneq \ker \phi^{(k+1)}. \]
    However, since $\phi$ restricts to an endomorphism of the finite group $\ker \phi^{(k+1)}$, whose kernel is given by $\ker \phi \cap \ker \phi^{(k+1)} = \ker \phi$, then the fundamental theorem on homomorphisms implies
    \[ \bigl[\ker \phi^{(k+1)} : \phi\big( \ker \phi^{(k+1)} \big)\bigr] = \frac{\left| \ker \phi^{(k+1)} \right| }{ \left| \phi\bigl( \ker \phi^{(k+1)} \bigr) \right|} = \left|\ker \phi \right| = \ell. \]
    Therefore, since $\ell$ is prime, we conclude that $\ker \phi^{(k)} = \phi\bigl(\ker \phi^{(k+1)}\bigr)$.
\end{proof}

\begin{definition} \label{def:AncestorTree}
    Let $\phi \in \End(G)$ and $S \subseteq G$ be a non-empty subset.
We define the \emph{ancestor tree} of a point $P \in S$ (with respect to $S$ and $\phi$) as the graph $\tau_{P,S,\phi}$, whose vertices are
\[ \{ Q \in G \ | \ d_S(Q) < \infty \textnormal{ and } \phi^{(d_S(Q))}(Q) = P \}, \]
and whose edges are
\[ \{ Q_1 \to Q_2 \ | \ Q_1 \neq P \textnormal{ and } \phi(Q_1) = Q_2 \}. \]
When $\phi$ is understood, we denote the graph by $\tau_{P,S}$.
\textcolor{black}{Moreover, for brevity we will use the convention
\begin{equation*}
    \tau_P=\begin{cases}
       \tau_{P,\mathcal{\Per(\phi)}} \qquad &\text{if $P\in \Per(\phi)$},\\
       \tau_{P,\{P\}} \qquad &\text{otherwise.} 
    \end{cases}
\end{equation*}
}
The elements in $\tau_{P}$ will be referred to as the \emph{ancestors} of $P$.
\end{definition}

\begin{remark}
    The ancestor tree $\tau_{P,S,\phi}$ is the restriction of the functional graph $\langle G, \phi \rangle$ to the elements of $G$ that eventually reach $P$ without passing through any other element of $S$. 
    In particular, each vertex $Q \neq P$ has a unique path to $P$.
    Hence, $\tau_{P,S,\phi}$ is an \emph{arborescence} rooted in $P$, namely a directed rooted graph such that every vertex different from $P$ has precisely one path to $P$.
\end{remark}


\begin{definition} \label{defn:Tlm}
Let $\ell \in \NN$ and $m \in \NN \cup \{\infty\}$.
We define $\T_\ell^{m}$ as the arborescence constructed as follows: 
\begin{itemize}
    \item Every node that is not a leaf has indegree $\ell$ if it is not the root, $\ell-1$ otherwise.
    \item If $m < \infty$, then $\T_\ell^{m}$ is finite and every leaf has depth $m$ (with respect to the root).
    Conversely, $\T_\ell^{\infty}$ has no leaves.
\end{itemize}
\end{definition}

\begin{remark}
    The above notation is borrowed from \cite{URIBEVARGAS}, although we stress that the groups we consider might well be infinite, and the root vertex has no loops.
\end{remark}
\begin{remark}
    The arborescence $\T_\ell^{m}$ of \Cref{defn:Tlm} is uniquely determined by the choice of $\ell,m$.
    The relevant family for this paper will be $\{ \T_3^m \}_{m \in \NN \cup \{\infty\}}$, which is portrayed in \Cref{fig:T3m}.
\end{remark}

\vspace{-0.3cm}
\begin{figure}[ht]
\centering\begin{tikzpicture}
\node[circle,scale=0.7] (0) at (0,0) {\textbullet};
\node[circle,scale=0.8] (0T) at ($(0)+(0,-0.4)$)  {$\T_3^0$};

\node[circle,scale=0.7] (1) at  ($(0)+(2,0)$) {\textbullet};
\node[circle,scale=0.8] (1T) at ($(1)+(0,-0.4)$)  {$\T_3^1$};
\coordinate (1l) at ($(1)+(0.2,0.8)$);
\coordinate (1r) at ($(1)+(-0.2,0.8)$);

\node[circle,scale=0.7] (2) at  ($(1)+(2.5,0)$) {\textbullet};
\node[circle,scale=0.8] (2T) at ($(2)+(0,-0.4)$)  {$\T_3^2$};
\node[circle,scale=0.7] (2l) at ($(2)+(-0.6, 0.8)$) {};
\node[circle,scale=0.7] (2r) at ($(2)+(0.6, 0.8)$) {};
\coordinate (2ll) at ($(2l)+(-0.4,0.8)$);
\coordinate (2lc) at ($(2l)+(0,0.8)$);
\coordinate (2lr) at ($(2l)+(0.4,0.8)$);
\coordinate (2rl) at ($(2r)+(-0.4,0.8)$);
\coordinate (2rc) at ($(2r)+(0,0.8)$);
\coordinate (2rr) at ($(2r)+(0.4,0.8)$);

\node[circle] (dots) at ($(2)+(2,0)$) {\dots};

\node[circle,scale=0.7] (i) at ($(dots)+(2,0)$) {\textbullet};
\node[circle,scale=0.8] (iT) at ($(i)+(0,-0.4)$)  {$\T_3^{\infty}$};
\node[circle,scale=0.7] (il) at ($(i)+(-0.6, 0.8)$) {};
\node[circle,scale=0.7] (ir) at ($(i)+(0.6, 0.8)$) {};
\coordinate (ill) at ($(il)+(-0.4,0.8)$);
\coordinate (ilc) at ($(il)+(0,0.8)$);
\coordinate (ilr) at ($(il)+(0.4,0.8)$);
\coordinate (irl) at ($(ir)+(-0.4,0.8)$);
\coordinate (irc) at ($(ir)+(0,0.8)$);
\coordinate (irr) at ($(ir)+(0.4,0.8)$);
\node[circle] (v1) at ($(ill)+(0,0.4)$)  {$\vdots$};
\node[circle] (v2) at ($(ilc)+(0,0.4)$)  {$\vdots$};
\node[circle] (v3) at ($(ilr)+(0,0.4)$)  {$\vdots$};
\node[circle] (v4) at ($(irl)+(0,0.4)$)  {$\vdots$};
\node[circle] (v5) at ($(irc)+(0,0.4)$)  {$\vdots$};
\node[circle] (v6) at ($(irr)+(0,0.4)$)  {$\vdots$};

\draw[-latex] ($(1l)+(0,0.05)$) -- (1);
\draw[-latex] ($(1r)+(0,0.05)$) -- (1);

\draw[-latex] ($(2l)+(0,0.05)$) -- (2);
\draw[-latex] ($(2r)+(0,0.05)$) -- (2);
\draw[-latex] (2ll) -- (2l);
\draw[-latex] (2lc) -- (2l);
\draw[-latex] (2lr) -- (2l);
\draw[-latex] (2rl) -- (2r);
\draw[-latex] (2rc) -- (2r);
\draw[-latex] (2rr) -- (2r);

\draw[-latex] ($(il)+(0,0.05)$) -- (i);
\draw[-latex] ($(ir)+(0,0.05)$) -- (i);
\draw[-latex] (ill) -- (il);
\draw[-latex] (ilc) -- (il);
\draw[-latex] (ilr) -- (il);
\draw[-latex] (irl) -- (ir);
\draw[-latex] (irc) -- (ir);
\draw[-latex] (irr) -- (ir);
\end{tikzpicture}
\caption{The arborescences $\T_3^m$.}
\label{fig:T3m}
\end{figure}

\begin{definition}
    We call \emph{oriented line} the infinite graph whose vertices are $\{P_i\}_{i \in \ZZ}$, with $P_i \neq P_j$ for every $i \neq j$, and whose edges are $\{P_i \to P_{i+1}\}_{i \in \ZZ}$.
    Similarly, we call \emph{oriented semiline} the infinite graph whose vertices are $\{P_i\}_{i \in \NN}$, with $P_i \neq P_j$ for every $i \neq j$, and whose edges are $\{P_i \to P_{i+1}\}_{i \in \NN}$.
\end{definition}

\begin{theorem} \label{thm:structure}
Let $G$ be a group, $\ell$ be a prime integer and $\phi \in \End(G)$ such that $\left|\ker \phi \right| = \ell$. Let also
\[ m = \sup_{P \in \tau_{\0}} \big( d_{\{\0\}}(P)\big)\in \NN_{>0} \cup\{\infty\}. \]
Then $\tau_{\0}$ is isomorphic to $\T_\ell^{m}$.
Moreover, every connected component of $\fungraph{G}{\phi}$ is one of the following:
\begin{enumerate}
    \item\label{per} A cycle $\mathcal{C} = \{ P_1, \dots, P_r \}$, with $\tau_{P_i,\mathcal{C}} \simeq \tau_{\0}$ for every $i \in \{1,\dots,r\}$.
    \item\label{line} An oriented line $\mathcal{L} = \{ P_i \}_{i \in \ZZ}$, with $\tau_{P_i,\mathcal{L}} \simeq \tau_{\0}$ for every $i \in \ZZ$.
    \item\label{sline} An oriented semiline $\mathcal{N} = \{ P_i \}_{i \in \NN}$, where $\tau_{P_i,\mathcal{N}}$ is isomorphic to $\T_\ell^{\min(i,m)}$ for every $i \in \NN$.
\end{enumerate}

\end{theorem}
\begin{proof}
    Let $V_\0$ be the set of vertices of $\tau_{\0}$, 
    namely
    \[ V_{\0} = \bigcup_{i \in \NN} \ker \phi^{(i)}. \]
    Thus, $V_\0$ is a subgroup of $G$ containing the whole series of \cref{eq:groupseq}, and $\phi$ restricts to a group endomorphism of $V_\0$, whose kernel has still size $\ell$.
    In particular, every non-leaf element of $\tau_\0$ has indegree $\ell$, except for $\0$ which has indegree $\ell-1$ (since the loop $\phi(\0) = \0$ is not part of $\tau_{\0}$).
    The maximal depth in $\tau_{\0}$ is the (possibly infinite) index $m$ given by \Cref{lem:groupchain}, starting from which the series of \cref{eq:groupseq} stabilizes.
    For every $P \in V_{\0}$, we denote its distance from $\0$ by $d(P) = d_{\{\0\}}(P)$, 
    then we have
    \[ d = d(P) \iff P \in \ker \phi^{(d)} \setminus \ker \phi^{(d-1)}. \]
    If $m < \infty$, then $\tau_{\0}$ is finite, and by \Cref{lem:groupchain} the leaves are precisely
    \[ \tau_{\0} \setminus \phi( \tau_{\0} ) = \ker \phi^{(m)} \setminus \phi\big( \ker \phi^{(m)} \big) = \ker \phi^{(m)} \setminus \ker \phi^{(m-1)}, \]
    i.e., they all have depth $m$.
    On the other hand, when $m = \infty$, by \Cref{lem:groupchain} we know that every element of depth $d$ is the image of $\ell$ elements of depth $d+1$, therefore there can be no leaves.
    Thus, in both cases we conclude that $\tau_{\0}$ is isomorphic to $\T_\ell^{m}$.

    Every connected component either contains periodic elements (hence it contains a cycle -- case \ref{per}) or its points $P$ define infinite semilines $\{\phi^{(i)}(P)\}_{i \in \NN}$.
    In the latter case, either there is a semiline that can be completed to a line (case \ref{line}), or the component contains no oriented lines (case \ref{sline}).
    We discuss these three cases separately.
    
    \ref{per}: Let us assume that the connected component contains a cycle $\mathcal{C}$.
    The map $\phi|_{\mathcal{C}}$ is invertible, and for every $P \in \mathcal{C}$ the map
    \[ \tau_{\0} \to \tau_{P,\mathcal{C}}, \quad u \mapsto \phi|_{\mathcal{C}}^{-d(u)}(P) + u \]
    is an oriented tree isomorphism, which maps $\0$ to $P$, \textcolor{black}{for the same argument of \cite[Thm.\,2.8]{DEKLERK2022112691}}.
    Hence, for every $P \in \mathcal{C}$ this gives an isomorphism of arborescences $\tau_{\0} \simeq \tau_{P,\mathcal{C}}$.

    \ref{line}: Let us assume that the connected component contains an oriented line $\mathcal{L} = \{P_i\}_{i \in \ZZ}$, with $\phi(P_i) = P_{i+1}$.
    The restriction $\phi|_{\mathcal{L}} : P_i \mapsto P_{i+1}$ is invertible, and for any $P_i \in \mathcal{L}$ we define
    \[ \Phi : \tau_{\0} \to \tau_{P_i,\mathcal{L}}, \quad u \mapsto P_{i-d(u)} + u. \]
    By adapting the idea of \cite[Thm.\,2.8]{DEKLERK2022112691}, we prove that $\Phi$ is an isomorphism of arborescences.
    
    $\Phi$ is well-defined: if $d = d(u)$, then clearly $\phi^{(d)}\big( \Phi(u) \big) = P_i$.
    Moreover, let $0 \leq f \leq d$ be an integer such that $\phi^{(f)}\big( \Phi(u) \big) \in \mathcal{L}$, i.e., it is equal to $P_m$ for some $m \in \ZZ$.
    We have
    \[ P_i = \phi^{(d-f)} \circ \phi^{(f)}\big( \Phi(u) \big) =  \phi^{(d-f)} \circ P_m = P_{m+d-f}, \]
    which implies $m = i+f-d$ because $P_i$ is not periodic. Therefore
    \[ \phi^{(f)}(P_{i-d} + u) = \phi^{(f)}\big( \Phi(u) \big) = P_m = P_{i+f-d} \implies \phi^{(f)}(u) = \0, \]
    which proves that $f \geq d$. Hence, for every $u \in \tau_{\0}$, the minimal $f \in \NN$ such that $\phi^{(f)}\big( \Phi(u) \big) \in \mathcal{L}$ is $f=d(u)$, therefore $\Phi(u) \in \tau_{P_i,\mathcal{L}}$.
    
    $\Phi$ is surjective: let $Q \in \tau_{P_i,\mathcal{L}}$ and $r$ be the unique integer such that $\phi^{(r)}(Q) = P_i$. We define
    \[ u = - P_{i-r} + Q. \]
    It is easy to check that $\phi^{(r)}(u) = \0$, hence $u \in \tau_{\0}$.
    We claim that $d(u) = r$, which implies that $\Phi(u) = Q$.
    In fact, if $\phi^{(s)}(u) = \0$ for some integer $0 \leq s \leq r$, then
    \[ \phi^{(s)}(Q) = P_{i+s-r} \in \mathcal{L}. \]
    Since $\phi^{(s)}(Q) \in \mathcal{L}$ only for $s \geq r$, we conclude that $s=r$.
    
    $\Phi$ is injective: let $u_1,u_2 \in \tau_{\0}$ with $d_1 = d(u_1), d_2 = d(u_2)$ and such that $\Phi(u_1) = \Phi(u_2)$, i.e.,
    \begin{equation}\label{eq:uiinterm}
        u_1-u_2 = - P_{i-d_1} + P_{i-d_2}.
    \end{equation}
    Let $d = \max(d_1,d_2)$, then we have
    \[ \0 = \phi^{(d)}(u_1-u_2) = - P_{i+d-d_1} + P_{i+d-d_2}. \]
    Since $P$ is not periodic, this implies $d_1 = d_2$, therefore also $u_1 = u_2$ by \cref{eq:uiinterm}, proving injectivity.
    
    $\Phi$ respects edges: for every $u,v \in \tau_{\0}$ such that $u \neq \0$ and $\phi(u) = v$ we have $d(u) = d(v)+1$, therefore $\phi\big(\Phi(u)\big) = \Phi(v)$.
    The converse also holds: if $\phi\big(\Phi(u)\big) = \Phi(v)$, then
    \[ \phi(u) - v = - P_{i-d(u)+1} + P_{i-d(v)}, \]
    which implies as above that $d(u) = d(v)+1$, hence $\phi(u) = v$.

    \ref{sline}: Let us assume that the connected component contains an oriented semiline $\mathcal{N} = \{ P_i \}_{i \in \NN}$, with $\phi(P_i) = P_{i+1}$.
    Unless $\mathcal{N}$ is part of an oriented line (case \ref{line}), we can assume $\phi^{-1}(P_0) = \emptyset$.
    For every $i \in \NN$, let us consider the arborescence $\tau_i$ obtained by restricting $\tau_{\0}$ to elements of $\ker \phi^{(i)}$.
    Since $\tau_{\0} \simeq \T_\ell^m$, using \Cref{lem:groupchain} as above we conclude that $\tau_i \simeq \T_\ell^{\min(i,m)}$.
    We discuss the finite and infinite cases separately.
    
    $\bullet$ $m = \infty$: We prove that the map 
    \[ \Psi : \tau_i \to \tau_{P_i,\mathcal{N}}, \quad u \mapsto P_{i-d(u)} + u, \]
    is a well-defined isomorphism of arborescences.

    $\Psi$ is well-defined: for every $u \in \ker \phi^{(i)}$ we have $d(u) \leq i$, hence $P_{i-d(u)}$ is a well-defined point of~$\mathcal{N}$.
    Checking that $\Psi(u) \in \tau_{P_i,\mathcal{N}}$ is completely analogous to \ref{line}.

    $\Psi$ is invertible: injectivity follows as in \ref{line}. 
    It is easy to check that, for every $0 \leq j \leq i$, we have
    \[ \ker \phi^{(j)} + P_{i-j} = \phi^{(-j)}(P_i). \]
    Since $| \tau_{P_0,\mathcal{N}} | = |\{ P_0 \}| = 1$, this inductively implies that for every $i \geq 1$ we have
    \[ | \tau_{P_i,\mathcal{N}} | = | \tau_{P_i} | - |\tau_{P_{i-1}}| = \sum_{j = 0}^i \ell^j - \sum_{j = 0}^{i-1} \ell^j = \ell^i = \bigl| \ker \phi^{(i)} \bigr| = |\tau_i|, \]
    from which the surjectivity of $\Psi$ follows.

    Finally, $\Psi$ respects the edges as in \ref{line}, hence it is an isomorphism of arborescences.
    
    $\bullet$ $m < \infty$: We begin by refining the semiline.
    Let $Q \in \tau_{P_m}$ be a point with the longest finite path to $P_m$, which exists since $m$ is finite and we are not in case \ref{line}.
    We consider the new semiline $\mathcal{N}' = \{ Q_i \}_{i \in \NN}$, where $Q_i = \phi^{(i)}(Q)$.
    We prove that for every $i \in \NN$ the map 
    \[ \Psi : \tau_i \to \tau_{Q_i,\mathcal{N}'}, \quad u \mapsto Q_{i-d(u)} + u, \]
    is a well-defined isomorphism of arborescences.

    $\Psi$ is well-defined with the same argument of the case $m = \infty$.

    $\Psi$ is invertible: injectivity follows as in \ref{line}. 
    For every $i < m$, the same counting argument of case $m = \infty$ still holds, and implies $|\tau_i| = |\tau_{Q_i,\mathcal{N}'}|$.
    For $i = m$, there cannot be elements $R \in \tau_{Q_i}$ with $d_{\mathcal{N}'}(R) > m$, by definition of $Q$.
    For $i > m$, there are no elements $R \in \tau_{Q_i}$ with $d(R) = m+1$, otherwise we would have
    \[ R - Q_{i-m-1} \in \ker \phi^{(m+1)} \setminus \ker \phi^{(m)}, \]
    contradicting \Cref{lem:groupchain}.
    Therefore, even for any $i \geq m$ we inductively conclude that
    \[ |\tau_{Q_i,\mathcal{N}'}| = |\tau_{Q_i}| - |\tau_{Q_{i-1}}| = \sum_{j = 0}^{m-1} \ell^j + (i-m+1)\ell^m - \sum_{j = 0}^{m-1} \ell^j - (i-m)\ell^m = \ell^m = 
    \bigl|\ker \phi^{(i)}\bigr| = |\tau_i|, \]
    hence $\Psi$ is surjective. 
    
    Again, $\Psi$ respects the edges as in \ref{line}, hence it is an isomorphism of arborescences.
\end{proof}

The different cases when $\ell=3$, which is the case of interest for the Hessian, are portrayed in \Cref{fig:StructPict}.

\begin{figure}[ht]
    \centering
\begin{tikzpicture}[node distance=2cm]

\node (0) at (0,0) {$\0$};
\node (tau0) at ($(0)+(-0,-1.5)$) {$\tau_{\0}$};
\node[circle,scale=0.7] (0l) at ($(0)+(-0.6, 0.8)$) {};
\node[circle,scale=0.7] (0r) at ($(0)+(0.6, 0.8)$) {};
\coordinate (0ll) at ($(0l)+(-0.4,0.8)$) {};
\coordinate (0lc) at ($(0l)+(0,0.8)$) {};
\coordinate (0lr) at ($(0l)+(0.4,0.8)$) {};
\coordinate (0rl) at ($(0r)+(-0.4,0.8)$) {};
\coordinate (0rc) at ($(0r)+(0,0.8)$) {};
\coordinate (0rr) at ($(0r)+(0.4,0.8)$) {};

\draw[-latex] ($(0l)+(0,0.05)$) -- (0);
\draw[-latex] ($(0r)+(0,0.05)$) -- (0);
\draw[-latex] (0ll) -- (0l);
\draw[-latex] (0lc) -- (0l);
\draw[-latex] (0lr) -- (0l);
\draw[-latex] (0rl) -- (0r);
\draw[-latex] (0rc) -- (0r);
\draw[-latex] (0rr) -- (0r);


\node[circle,scale=0.7] (02) at (3, -0.01) {\textbullet};
\node (i) at ($(02)+(-0,-1.5)$) {Case \ref{per}};
\node (02P) at ($(02)+(0,-0.3)$) {$P_i$};
\node[circle,scale=0.7] (02r2) at ($(02)+(0.8,-0.2)$) {\textbullet};
\node[circle,scale=0.7] (02l2) at  ($(02)+(-0.8,-0.2)$) {\textbullet};

\node[circle,scale=0.7] (02l) at ($(02)+(-0.6, 0.8)$) {};
\node[circle,scale=0.7] (02r) at ($(02)+(0.6, 0.8)$) {};
\coordinate (02ll) at ($(02l)+(-0.4,0.8)$) {};
\coordinate (02lc) at ($(02l)+(0,0.8)$) {};
\coordinate (02lr) at ($(02l)+(0.4,0.8)$) {};
\coordinate (02rl) at ($(02r)+(-0.4,0.8)$) {};
\coordinate (02rc) at ($(02r)+(0,0.8)$) {};
\coordinate (02rr) at ($(02r)+(0.4,0.8)$) {};

\draw[-latex] (02l2) edge[bend left =10] (02);
\draw[-latex] (02) edge[bend left =10] (02r2);
\draw[-latex,dashed] (02r2) edge[bend left=80] (02l2);

\draw[-latex] ($(02l)+(0,0.05)$) -- (02);
\draw[-latex] ($(02r)+(0,0.05)$) -- (02);
\draw[-latex] (02ll) -- (02l);
\draw[-latex] (02lc) -- (02l);
\draw[-latex] (02lr) -- (02l);
\draw[-latex] (02rl) -- (02r);
\draw[-latex] (02rc) -- (02r);
\draw[-latex] (02rr) -- (02r);


\node[circle,scale=0.7] (03) at (6.5, -0.02) {\textbullet};
\node (ii) at ($(03)+(-0,-1.5)$) {Case \ref{line}};
\node (03P) at ($(03)+(0,-0.3)$) {$P_i$};
\node[circle,scale=0.7] (03r2) at ($(03)+(0.8,0)$) {\textbullet};
\node[circle,scale=0.7] (03l2) at  ($(03)+(-0.8,0)$) {\textbullet};
\node[circle,scale=0.7] (03rr2) at ($(03r2)+(0.8,0)$) {};
\node[circle,scale=0.7] (03ll2) at  ($(03l2)+(-0.8,0)$) {};

\node[circle,scale=0.7] (03l) at ($(03)+(-0.6, 0.8)$) {};
\node[circle,scale=0.7] (03r) at ($(03)+(0.6, 0.8)$) {};
\coordinate (03ll) at ($(03l)+(-0.4,0.8)$) {};
\coordinate (03lc) at ($(03l)+(0,0.8)$) {};
\coordinate (03lr) at ($(03l)+(0.4,0.8)$) {};
\coordinate (03rl) at ($(03r)+(-0.4,0.8)$) {};
\coordinate (03rc) at ($(03r)+(0,0.8)$) {};
\coordinate (03rr) at ($(03r)+(0.4,0.8)$) {};

\draw[-latex] (03l2) edge (03);
\draw[-latex] (03) edge (03r2);
\draw[-latex,dashed] (03r2) edge (03rr2);
\draw[-latex,dashed] (03ll2) edge (03l2);

\draw[-latex] ($(03l)+(0,0.05)$) -- (03);
\draw[-latex] ($(03r)+(0,0.05)$) -- (03);
\draw[-latex] (03ll) -- (03l);
\draw[-latex] (03lc) -- (03l);
\draw[-latex] (03lr) -- (03l);
\draw[-latex] (03rl) -- (03r);
\draw[-latex] (03rc) -- (03r);
\draw[-latex] (03rr) -- (03r);


\node[circle,scale=0.7] (04) at (11, -0.02) {\textbullet};
\node (iii) at ($(04)+(-0.7,-1.5)$) {Case \ref{sline}};
\node (04P) at ($(04)+(0,-0.3)$) {$P_2$};
\node[circle,scale=0.7] (04r2) at ($(04)+(0.8,0)$) {};
\node[circle,scale=0.7] (04l2) at  ($(04)+(-1.4,0)$) {\textbullet};
\node (04P1) at ($(04l2)+(0,-0.3)$) {$P_1$};
\node[circle,scale=0.7] (04ll2) at  ($(04l2)+(-0.8,0)$) {\textbullet};
\node (04P2) at ($(04ll2)+(0,-0.3)$) {$P_0$};

\node[circle,scale=0.7] (04l) at ($(04)+(-0.6, 0.8)$) {};
\node[circle,scale=0.7] (04r) at ($(04)+(0.6, 0.8)$) {};
\coordinate (04ll) at ($(04l)+(-0.4,0.8)$) {};
\coordinate (04lc) at ($(04l)+(0,0.8)$) {};
\coordinate (04lr) at ($(04l)+(0.4,0.8)$) {};
\coordinate (04rl) at ($(04r)+(-0.4,0.8)$) {};
\coordinate (04rc) at ($(04r)+(0,0.8)$) {};
\coordinate (04rr) at ($(04r)+(0.4,0.8)$) {};

\coordinate (04l2r) at ($(04l2)+(0.2,0.8)$) {};
\coordinate (04l2l) at ($(04l2)+(-0.2,0.8)$) {};

\draw[-latex] (04l2) edge (04);
\draw[-latex,dashed] (04) edge (04r2);
\draw[-latex] (04ll2) edge (04l2);

\draw[-latex] ($(04l)+(0,0.05)$) -- (04);
\draw[-latex] ($(04r)+(0,0.05)$) -- (04);
\draw[-latex] (04ll) -- (04l);
\draw[-latex] (04lc) -- (04l);
\draw[-latex] (04lr) -- (04l);
\draw[-latex] (04rl) -- (04r);
\draw[-latex] (04rc) -- (04r);
\draw[-latex] (04rr) -- (04r);

\draw[-latex] ($(04l2r)+(0,0.05)$) -- (04l2);
\draw[-latex] ($(04l2l)+(0,0.05)$) -- (04l2);
\end{tikzpicture}
    \caption{Possible connected components of a group endomorphism with a finite kernel of cardinality $\ell=3$. The cases correspond to those of \Cref{thm:structure}.}
    \label{fig:StructPict}
\end{figure}

\begin{remark}
\label{rem:northcott}
  When $G$ is a subgroup of $E_k(\kbar)$, the finite connected components (if any) of $\fungraph{G}{\phi}$ are fully described by case~\ref{per} of \Cref{thm:structure}.
  In particular, if $G$ is finite, this is the only case that appears.
  On the other hand, when $G \leq E_k(\K)$ for a number field $\K$, and $\phi$ is the restriction of an endomorphism of $E_k$, the number of preperiodic elements is finite (see \cite[Thm.\,5]{Northcott} or \cite[Thm.\,3.12]{silverman2007arithmetic}), hence case~\ref{per} of \Cref{thm:structure} covers only a finite number of elements.
  We remark that every case described in \Cref{thm:structure} can actually occur for suitable choices of $\fungraph{G}{\phi}$, as discussed in \Cref{App:tightnessOfStructureThm}. 
\end{remark}


\section{\textcolor{black}{Reduced Lattès diagrams}} \label{sec:Sk}

{\color{black}
In this section, we provide general results that can be employed to study Lattès maps fitting in a reduced diagram, as in \Cref{defn:Lattes}, with a particular focus on non-algebraically closed fields.
We will further assume that the projection $\pi$ is one of those given in \Cref{rmk:CMforRigid}.
In particular, \Cref{thm:StructureS} shows how the fibers of $\pi$ can be expressed as unions of subgroups of $E$, and \Cref{prop:newdisjpartsSk} shows that these subgroups intersect each other at (at most) six points.
In \Cref{subsec:Hessianpifibers}, we specialize these results to the Hessian transformation. Finally, in \Cref{subsec:piVSpsi}, \Cref{prop:GenProjectingSk} illustrates how the functional graph of $\psi$ is transformed under $\pi$.}

{\color{black}{
\subsection{The fiber of \texorpdfstring{$\pi$}{π}}

When $\K = \kbar$, the dynamics of a Lattès map $\phi:\P^1(\K) \to \P^1(\K)$ fitting in a reduced diagram
 can be easily read by quotienting $\fungraph{E(\K)}{\psi}$ for the action of $\Gamma \subseteq \Aut(E)$.
 The situation is different when $\K \neq \kbar$, as the dynamics of $\phi$ depends on the action of $\psi$ restricted to 
 \[ \S_{\pi} = \pi^{-1}\big( \P^1(\K) \big) \subseteq E(\kbar). \]
 Hereafter, we will consider only curves $E$ defined by short Weierstrass equations, and with one of the four projections $\pi:E(\kbar) \to \P^1(\kbar)$ described in \Cref{rmk:CMforRigid}.
 This assumption is fulfilled by the Hessian (cf. \Cref{thm:projectphi3}), but does not hold for general Lattès maps since the conjugation over $\kbar$ does not imply functional graph isomorphism over $\K$ \cite[§7]{Silverman2012}. 
 
 With these assumptions, we have $E(\K) \subseteq \S_{\pi} \subseteq E(\kbar)$, although both inclusions may be strict.
}
We will employ the notation of twists introduced in \Cref{lemma:twists}.
In particular, let $n = |\Aut(E)|$ and $\iota_{D} : E^{(D)} \xrightarrow{\simeq} E$ be the $\kbar$-isomorphism corresponding to a fixed choice of $D^{1/n}$.
To resolve the ambiguity in the choice of the root, we consider
\[ \TT_{\pi} = \{ \sigma \in \Aut(E) \ | \ \sigma(\S_{\pi}) = \S_{\pi} \}. \]
It is easy to see that $\TT_{\pi}$ is a group, which fits in the chain of subgroups
\[ \langle \Gamma, \Aut_{\K}(E) \rangle \leq \TT_{\pi} \leq \Aut(E). \]
In fact, by inspecting the possible choices for $\Gamma$ and $j(E)$, one explicitly finds that
\begin{equation} \label{eq:TT}
    \TT_{\pi} = \begin{cases} 
    \Aut_{\K}(E) & \textnormal{if } j(E)=0 \textnormal{ and } |\Gamma| = 2, \\
    \Aut(E) & \textnormal{otherwise}.
\end{cases}
\end{equation} 
The next lemma further characterizes the elements of $\TT_{\pi}$. 
\begin{lemma} \label{lem:TT}
    Let $\TT_{\pi}$ be defined as above.
    For every $\sigma \in \Aut(E)$, we have $\sigma \in \TT_{\pi}$ if and only if $\exists \varphi_{\sigma} \in \Aut_{\K}(\P^1)$ such that $\pi \circ \sigma = \varphi_{\sigma} \circ \pi$.
\end{lemma}
\begin{proof}
    Let $\sigma \in \TT_{\pi}$. For every $Q \in \P^1(\kbar)$, we consider any $P_Q \in E(\kbar)$ such that $\pi(P_Q) = Q$, and set
    \[ \varphi_{\sigma} : \P^1(\kbar) \to \P^1(\kbar), \quad Q \mapsto \pi\big( \sigma(P_Q) \big). \]
    This map is well-defined: since $\Aut(E)$ is commutative and $\pi$ is invariant on $\Gamma$-orbits, the map is independent of the choice of $P_Q$.
    Since $\Aut(E)$ acts linearly on the coordinates of $(x,y) \in E(\kbar)$, then we conclude that $\varphi_{\sigma}$ is a scalar map, which satisfies $\pi \circ \sigma = \varphi_{\sigma} \circ \pi$ by construction.
    Moreover, a direct inspection of all the possible choices for $\Gamma$ and $j(E)$ shows that it is defined over $\K$, thus $\varphi_{\sigma} \in \Aut_{\K}(\P^1)$.

    On the other hand, for each $P \in \S_{\pi}$, we have $\pi\big( \sigma(P) \big) = \varphi_{\sigma}\big( \pi(P ) \big) \in \P^1(\K)$,
    therefore $\sigma(P) \in \S_{\pi}$, so $\sigma(\S_{\pi}) \subseteq \S_{\pi}$. 
    Conversely, for every $P \in \S_{\pi}$, we have
    \[ \pi \circ \sigma^{-1}(P) = \varphi_{\sigma}^{-1} \circ \pi (P) \in \P^1(\K), \]
    hence $\sigma^{-1}(P) \in \S_{\pi}$, i.e., $\S_{\pi} \subseteq \sigma(\S_{\pi})$.
\end{proof}

Although $\S_{\pi}$ is defined over a field that is potentially larger than $\K$, we prove that it is completely determined by isomorphic copies of $E^{(D)}(\K)$, for certain twists $E^{(D)}$ of $E$.
\textcolor{black}{\begin{remark} \label{rmk:chooseroot}
    Each twist intrinsically contains the information about \emph{any} choice among the $n$-th roots of $D$, as it corresponds to a 1-cocycle in $H^{1}\big(\Gal(\kbar,\K),\Aut(E)\big)$ \cite[§X.2-5]{silv:arithEll}.
    We now 
     need to make this choice semi-explicit. 
    Namely, for any integer factorization $n = cm$, we define
    \[ \iota_{D^m} : E^{(D^m)} \to E, \quad (x,y) \mapsto \big(D^{-2/c}x,\;D^{-3/c}y\big). \]
    This corresponds to fixing $(D^{m})^{1/n} = D^{1/c}$, for any choice of $c$-th root of $D$.
    Concretely, this means that among the $m$-th roots of $D^m$, we always pick $D$ itself.
    The choice of $D^{1/c} \in \kbar$ is arbitrary, but it is fixed once and for all for every $D \in \K^*/(\K^*)^c$, which means that we associate a unique embedding $\iota_{D^m} : E^{(D^m)} \to E$ to each twist $E^{(D^m)}$.
    Choosing a different root is the same as composing $\iota_{D^m}$ by a suitable automorphism of $E$.
\end{remark}}
\begin{theorem} \label{thm:StructureS}
    Let $E(\kbar) \subset \P^2(\kbar)$ be defined by $y^2 = x^3+Ax+B$, with $A,B \in \K$.
    Let $\Gamma \subseteq \Aut(E)$ and $\pi : E \to E/\Gamma \simeq \P^1$ be one of the four maps of \Cref{rmk:CMforRigid}.
    Let also 
    $c = |\Gamma|$ and $m = [\Aut(E):\Gamma]$.
    Then
    \[ \S_{\pi} = \bigcup_{\substack{\sigma \in \TT_{\pi}/\Aut_{\K}(E)\\D \in \K^*/(\K^*)^{c}}} \sigma \circ\iota_{D^{m}}\big(E^{(D^m)}(\K)\big). \]
\end{theorem}

\begin{proof}
    Let $n = |\Aut(E)| = cm$. We prove the inclusions separately.

    $\supseteq )$ Let $D \in \K^*$ and $Q = (x,y) \in E^{(D^m)}(\K)$. 
    Let $P = \iota_{D^m}(Q) = (\frac{x}{D^{2/c}}, \frac{y}{D^{3/c}})$ (\Cref{rmk:chooseroot}).
    Since $E \simeq E^{(D)}$ over $\kbar$, we have $\Aut(E) = \Aut(E^{(D)})$, hence the projection $\pi^{(D)} : E^{(D)} \to E^{(D)}/\Gamma \simeq \P^1$ is defined by the same rational function of $\pi$.
    A straightforward computation then shows that
    \[ \pi(P) = \frac{ \pi^{(D^m)}(Q) }{D} \in \P^1(\K) \quad \implies \quad P \in \S_{\pi}. \]
    For every $\sigma \in \TT_{\pi}$, we have $\sigma(P) \in \S_{\pi}$, hence the post-composition with $\sigma$ preserves the inclusion.

    $\subseteq )$ Let $(x,y) \in \S_{\pi}$. In all possible cases, we explicitly exhibit a $D \in \K^*$ such that the point 
    \[ Q = (xD^{2/c}, yD^{3/c}) \in E^{(D^m)}(\K) \]
    satisfies $(x,y) = \sigma \circ\iota_{D^{m}}(Q)$, for a suitable choice of $\sigma \in \TT_{\pi}$.

    \begin{itemize}[leftmargin=1.1cm]
        \item[$c=2$:] When the projection is unramified (i.e., $y \neq 0$), let $D = y^2 \in \K^*$.
        Since $x = \pi(x,y) \in \K$, then $y^2 \in \K^*$, and we directly verify that $Q = (xy^{2},y^{4}) \in E^{(D^m)}(\K)$.
        In this case, we obtain $\iota_{D^{m}}(Q) = (x,\pm y)$, depending on the choice of the square root, hence we retrieve $(x,y)$ up to the action of $[-1] \in \TT_{\pi}$.
        If $y=0$, we have $(x,0) \in E(\K)$, hence the choice $D=1$ works.
        \item[$c=3$:] We necessarily have $j(E) = 0$, therefore $(n,m)=(6,2)$.
        If $x \neq 0$, then $D = x^3 = y^2-B \in \K^*$, since $y = \pi(x,y) \in \K$.
        We immediately check that $Q = (x^{3},x^{3}y) \in E^{(D^2)}(\K)$, and that $\iota_{D^{2}}(Q) \in \{(\zeta_3^i x, y)\}_{i \in \{0,1,2\}}$, hence there exists an element of $\Aut(E) = \TT_{\pi}$ (\cref{eq:TT}) that brings it to $(x,y)$.
        If $x=0$, then we choose $D=1$ and $Q = (x,y)$.
        \item[$c=4$:] In this case, we have $j(E) = 1728$, therefore $(n,m)=(4,1)$.
        Since $x^2 = \pi(x,y) \in \K$, if $y \neq 0$ we consider $D = y^4 = (x^3+Ax)^2 \in \K^*$.
        We then directly verify that $Q = (xy^{2},y^{4}) \in E^{(D)}(\K)$ is the requested point, as $\iota_{D^{m}}(Q) \in \{\sigma(x,y)\}_{\sigma \in \Aut(E)}=\{\sigma(x,y)\}_{\sigma \in \TT_{\pi}}$ (\cref{eq:TT}).
        If $y = 0$, then we have two possibilities: either $x = 0$, hence we consider $D=1$ and $Q = (0,0)$, or $x \neq 0$, hence we consider $D = x^2 \in \K^*$ and $Q = (x^2,0)$.
        \item[$c=6$:] This case occurs with $j(E) = 0$, therefore $(n,m)=(6,1)$ and $x^3 = \pi(x,y) \in \K$, then also $y^2 \in \K$.
        When $xy \neq 0$, we consider $D = x^6y^6 \in \K^*$ and $Q = (x^3y^2, x^3y^4) \in E^{(D)}(\K)$, so that $\iota_{D}(Q) \in \{\sigma(x,y)\}_{\sigma \in \Aut(E)}=\{\sigma(x,y)\}_{\sigma \in \TT_{\pi}}$ (\cref{eq:TT}).
        When $x = 0 \neq y$, we consider $D=y^2\in \K^*$ and $Q = (0,y^2)$, while if $x \neq 0 = y$, we consider $D=x^6 \in \K^*$ and $Q = (x^3,0)$.
    \end{itemize}

    Finally, we note that $\sigma \circ \iota_{D^{m}}\big(E^{(D^m)}(\K) \big)$ does not change by varying $D$ by any element in $(\K^*)^c$, and $\sigma$ by any element in $\Aut_{\K}(E)$.
    In fact, if $D_u = Du^c$ for some $u \in \K^*$, then 
    \[ \iota_{D_u^{m}}\big(E^{(D_u^m)}(\K) \big) = \iota_{D^{m}} \circ \iota_{u^{n}} \big(E^{(D^mu^n)}(\K) \big) = \iota_{D^{m}}\big(E^{(D^m)}(\K) \big), \]
    hence the points in $\iota_{D_u^{m}}\big(E^{(D_u^m)}(\K) \big)$ were already covered by $\iota_{D^{m}}\big(E^{(D^m)}(\K) \big)$.
    Similarly, when $\sigma$ is replaced by $\tau \circ \sigma$ for some $\tau \in \Aut_{\K}(E)$, we have
    \[ \tau \circ \sigma \circ \iota_{D^{m}}\big(E^{(D^m)}(\K) \big) = \sigma \circ \iota_{D^{m}}\Big( \tau\big(E^{(D^m)}(\K) \big)\Big) = \sigma \circ \iota_{D^{m}} \big(E^{(D^m)}(\K) \big), \]
    where the last equality follows from $\Aut_{\K}(E) = \Aut_{\K}(E^{(D)})$.
\end{proof}

\begin{remark} \label{rmk:firstunion}
    By \Cref{thm:StructureS}, $\S_{\pi}$ depends on the base field $\K$ in two, essentially independent, ways.
    The first union adjusts the (arbitrary) choice of $n$-th roots in $\kbar$, while the second selects a suitable set of twists of $E$.
    Since $|\TT_{\pi}/\Aut_{\K}(E)| \leq 3$, the first union is typically small, and it is trivial for $j(E) \neq 0,1728$.
    By \Cref{lem:TT}, we know that this union only contributes to a multiple covering of $\P^1(\K)$, hence it is not essential from a dynamical perspective (although it is needed for the set equality). 
    The second union, instead, defines a typically irredundant covering of $\P^1(\K)$, and might well consist of infinitely many components (e.g., when $\K = \QQ$).    
\end{remark}

\begin{remark}
    \Cref{thm:StructureS} establishes a general decomposition that can be used to study Lattès fitting \textcolor{black}{a reduced Lattès diagram, with $E$ in (short) Weierstrass form}.
    In particular, \Cref{cor:Spi4Hessian} provides an algebraic interpretation of the sets $A_n, B_n$ defined in \cite[§3]{ugolini2014} and \cite[§3]{ugolini2018functional}.
    However, we remark that in our setting, the considered curve does not need to be ordinary. In fact, when $E_k$ is supersingular, this implies interesting dynamical contraints (see, e.g., \Cref{sec:q2mod3}).
\end{remark}

{\color{black}
The following proposition shows that the union of \Cref{thm:StructureS} is essentially minimal, as two distinct components intersect at most in points with a zero coordinate.


\begin{proposition} \label{prop:newdisjpartsSk}
     Let $E$, $c$, and $m$ be as in \Cref{thm:StructureS}.
     Let $D_1, D_2 \in \K^*/(\K^*)^c$, $\sigma_1, \sigma_2 \in \TT_{\pi}/\Aut_{\K}(E)$, and
     \[ [x:y:z] \in \sigma_1 \circ\iota_{D_1^{m}}\big(E^{(D_1^m)}(\K)\big) \cap \sigma_2 \circ\iota_{D_2^{m}}\big(E^{(D_2^m)}(\K)\big). \]
     Then either $xyz=0$, or $D_1 = D_2$ and $\sigma_1 = \sigma_2$.
\end{proposition}
\begin{proof}
    For $z \neq 0$, we can restrict the proof to affine points.
    Since $\sigma_i \in \Aut(E)$, then there exists $u_i \in \kbar^*$ such that $\sigma_i(x,y) = (u_i^2x,u_i^3y)$.
    Therefore, there are $(x_i,y_i) \in E^{(D_i^m)}(\K)$ such that
    \[ ( u_1^2D_1^{-2/c}x_1, u_1^3D_1^{-3/c}y_1 ) = (x,y) = ( u_2^2D_2^{-2/c}x_2, u_2^3D_2^{-3/c}y_2 ). \]
    If $xy \neq 0$, then $x_1x_2y_1y_2 \neq 0$ and
    \[ \frac{D_1^{2/c}}{D_2^{2/c}} \left(\frac{u_2}{u_1}\right)^2 = \frac{x_1}{x_2} \in \K^*,
    \qquad
    \frac{D_1^{3/c}}{D_2^{3/c}} \left(\frac{u_2}{u_1}\right)^3 = \frac{y_1}{y_2} \in \K^*. 
    \]
    Dividing the second equation by the first, we get $\frac{D_1^{1/c} u_2}{D_2^{1/c} u_1} \in \K^*$.
    We straightforwardly check that, for every possible value of $c$, we have $(\frac{u_2}{u_1})^c = (\pm 1)^c$, therefore $\frac{D_2}{D_1} \in (\K^*)^c$, namely $D_1 = D_2 \in \K^*/(\K^*)^c$.
    By \Cref{rmk:chooseroot}, this implies that $\iota_{D_1^{m}} = \iota_{D_2^{m}}$, i.e., $D_1^{1/c} = D_2^{1/c}$.
    Therefore, we conclude that also $\frac{u_2}{u_1} \in \K^*$, which implies that
    $\sigma_2 \circ \sigma_1^{-1} \in \Aut_{\K}(E)$,
    namely $\sigma_1 = \sigma_2 \in \TT_{\pi}/\Aut_{\K}(E)$.
\end{proof}
}

\subsection{The fibers of \texorpdfstring{$\pi_{\Lambda}$}{πΛ} and \texorpdfstring{$\pi_{\Hess}$}{πH}} \label{subsec:Hessianpifibers}

We now apply \Cref{thm:StructureS} specifically to the study of the Hessian transformation.}

\begin{definition} \label{defn:SlSh}
    Let $k \in \K^*$. We define
    \[ \S_{\Lambda_k} = \pi_{\Lambda}^{-1}\big( \P^1(\K) \big) \cap E_k(\kbar) \quad \textnormal{and} \quad \S_{\Hess_k} = \pi_{\Hess}^{-1}\big( \P^1(\K) \big) \cap E_k(\kbar). \]
\end{definition}
\begin{remark} \label{rmk:graph_epi}
\textcolor{black}{It is clear that $\S_{\Lambda_k}$ and $\S_{\Hess_k}$ are the specializations of $\S_{\pi}$ arising from the reduced Lattès diagrams of \Cref{thm:projectphi3}.
Thus, they define the functional graph epimorphisms}
\[ \pi_{\Lambda} : \langle \S_{\Lambda_k}, \psi_k \rangle \to \langle \P^1(\K), \Lambda_k \rangle \quad \textnormal{and} \quad \pi_{\Hess} : \langle \S_{\Hess_k}, \psi_k \rangle \to \langle \P^1(\K), \Hess_k \rangle. \]
Therefore, to describe the functional graph of $\Lambda_k$ (resp.\ $\Hess_k$), it is sufficient to understand the functional graph of $\psi_k$ on $\S_{\Lambda_k}$ \big(resp.\ $\S_{\Hess_k}$\big) and how it projects under $\pi_{\Lambda}$ (resp.\ $\pi_{\Hess}$).
\end{remark}

\begin{remark} \label{rmk:extensions} In general, we do not need to consider the whole $\kbar$: the points in $\S_{\Lambda_k}$ are defined over
    \[ \K\left( \sqrt{x^3+k/4} \right)_{x \in \K}  \subseteq \overline{\K}, \]
while the coordinates of those of $\S_{\Hess_k}$ lie in
    \[ \K\left( \sqrt{x}, \sqrt[3]{x} \right)_{x \in \K}  \subseteq \overline{\K}. \]
Since $\S_{\Lambda_k}$ and $\S_{\Hess_k}$ are closed with respect to $\psi_k$, both the above fields must contain $\sqrt{-3}$. In fact, one can straightforwardly check that, \textcolor{black}{by defining $x_t = -\frac{t^3+k}{3t^2}$ for any $t \in \K^*$, we have
\[ -3 = \frac{\left(t^3+\frac{k}{4}\right)}{\left(x_t^3+\frac{k}{4}\right)}\Big(\frac{t^3-2k}{3t^2}\Big)^2, \]
whenever it is defined, i.e., when $t^3 \notin \{2k,-\frac{k}{4}\}$.}
Hence, the whole $\Aut(E_k)$ is defined over these fields, since they contain $\z$.
\end{remark}
\textcolor{black}{\begin{corollary} \label{cor:Spi4Hessian}
    Let $k \in \K^*$. Then 
    \[ \S_{\Lambda_k} = \bigcup_{u \in \K^*/(\K^*)^2} \phi_{u^{-\frac{1}{2}}}\big(E_{u^3k}(\K)\big) \qquad \text{and} \qquad \S_{\Hess_k} = \begin{cases}
        \bigcup_{\substack{i \in \{0,1,2\} \\ u \in \K^*/(\K^*)^6}} \phi_{\z^iu^{-\frac{1}{6}}}\big(E_{uk}(\K)\big) \quad &\textnormal{if } \z \notin \K, \\[3ex]
        \bigcup_{u \in \K^*/(\K^*)^6} \phi_{u^{-\frac{1}{6}}}\big(E_{uk}(\K)\big) \quad &\textnormal{if } \z \in \K.
    \end{cases} 
    \]
\end{corollary}
\begin{proof}
    It follows by \Cref{thm:StructureS} applied to the reduced Lattès diagrams given by \Cref{thm:projectphi3}, where $\TT_{\pi}$ is given explicitly by \cref{eq:TT}.
\end{proof}}

\textcolor{black}{\begin{remark} \label{rmk:groupsrespectedbypsi}
The fiber $\S_{\pi}$ of the projection $\pi$ is a union of groups, by \Cref{thm:StructureS}.
The dynamics of a Lattès map with such a projection is dictated by the action of $\psi$ on those groups.
If $\psi$ is a scalar multiplication (which implies that the Lattès map is flexible), these groups are all closed under the action of $\psi$, therefore the group structure of the twists of $E$ completely determines the underlying dynamics.
However, this need not hold for a general $\psi$.
For instance, these groups may (\Cref{sec:q1mod3}) or may not (\Cref{sec:q2mod3}) be closed under the lifted Hessian $\psi = \psi_k$.
\end{remark}}

\textcolor{black}{By \Cref{rmk:groupsrespectedbypsi}, the decompositions of $\S_{\Lambda_k}$ and $\S_{\Hess_k}$ are respected by $\psi_k^{(2)} = [-3]$ (\Cref{lem:psi2}). }
The next lemma shows that $\psi_k$ is actually closed on the union of (possibly identical) pairs of these groups.

\begin{lemma} \label{lem:Spartsclosed}
    Let $k,u \in \K^*$. Then, for any fixed choice of $u^{\frac{1}{6}} \in \kbar$, we have
    \[ \psi_k\Big(\phi_{u^{-\frac{1}{6}}}\big(E_{uk}(\K)\big)\Big) \subseteq \phi_{(-\frac{u}{27})^{-\frac{1}{6}}}\big(E_{-\frac{u}{27}k}(\K)\big) \quad \textnormal{and} \quad \psi_k\Big(\phi_{(-\frac{u}{27})^{-\frac{1}{6}}}\big(E_{-\frac{u}{27}k}(\K)\big)\Big) \subseteq \phi_{u^{-\frac{1}{6}}}\big(E_{uk}(\K)\big). \]
\end{lemma}
\begin{proof}
    Let $(x,y) \in E_{uk}(\K)$. A straightforward computation shows that
    \[ \psi_k \big( \phi_{u^{-\frac{1}{6}}} (x,y) \big) = \left(-\frac{x^3+uk}{3x^2u^{\frac{1}{3}}}, -y\frac{x^3-2ku}{3\sqrt{-3}x^3u^{\frac{1}{2}}}\right) = \phi_{(-\frac{u}{27})^{-\frac{1}{6}}}\left(\frac{x^3+uk}{9x^2}, y\frac{x^3-2ku}{27x^3}\right), \]
    which belongs to $\phi_{(-\frac{u}{27})^{-\frac{1}{6}}}\big(E_{-\frac{u}{27}k}(\K)\big)$.
    The other inclusion can be deduced by substituting $u$ with $-\frac{u}{27}$:
    \[ \psi_k\Big(\phi_{(-\frac{u}{27})^{-\frac{1}{6}}}\big(E_{-\frac{u}{27}k}(\K)\big)\Big) \subseteq \phi_{(\frac{u}{3^6})^{-\frac{1}{6}}}\big(E_{\frac{u}{3^6}k}(\K)\big) = \phi_{u^{-\frac{1}{6}}}\big(E_{uk}(\K)\big), \]
    where the last equality follows by \textcolor{black}{\Cref{thm:StructureS}, since they correspond to the same embedding of the same twist of $E_k$.}
\end{proof}

\textcolor{black}{\subsection{Projecting the graph} \label{subsec:piVSpsi} }

{\color{black} 
In this section, we describe how, for a reduced Lattès diagram, the functional graph $\fungraph{\S_{\pi}}{\psi}$ projects to $\fungraph{\P^1(\K)}{\phi}$. We stress that the projected graph is not necessarily as regular as those arising from group endomorphisms, since $\S_{\pi}$ need not be a group, and $\pi$ can identify some of its parts.

The following lemma refines the notion of compatibility between $\psi$ and $\Gamma \subseteq \Aut(E)$, which was introduced in \Cref{rem:compatibility}.
\begin{lemma} \label{lem:GammaActionPsi}
   There exists $\epsilon \in \{\pm 1\}$ such that, for every $\gamma \in \Gamma$ and $d\geq 0$, we have
    \[ \psi^{(d)} \circ \gamma = \gamma^{\epsilon^d} \circ \psi^{(d)}. \]
\end{lemma}
\begin{proof}
For $d=0$ the statement is trivial.
Let us now prove the case $d=1$.

We claim that, for every $\gamma \in \Gamma$, there exists a unique $\theta_\gamma \in \Gamma$ such that
    \[ \psi \circ \gamma = \theta_\gamma \circ \psi. \]
    To prove this, we observe that the reduced Lattès structure implies that every $R \in E(\kbar)$ 
    is sent to $\0$ by one of the 
    morphisms $\{\psi \circ \gamma - \gamma' \circ \psi\}_{\gamma' \in \Gamma}$. In particular, one of them, say $\theta_\gamma$, must map infinitely many points to $\0$ and is therefore the zero morphism. To conclude the proof of our claim, we need to check that $\theta_\gamma$ is unique: if $\psi \circ \gamma = \theta_\gamma \circ \psi = \theta'_\gamma \circ \psi$, one has $(\theta_\gamma - \theta'_\gamma)\circ \psi=0$, which implies $\theta_\gamma = \theta'_\gamma$ by surjectivity of $\psi$~\cite[Thm.\,I.2.3]{silv:arithEll}. 
    
    One can directly check that 
    \[ \theta: \Gamma \to \Gamma, \quad \gamma \mapsto \theta_{\gamma}, \]
    is a group homomorphism.
    A similar argument to that used above proves that $\theta$ is also  injective, as
    \[ \theta_{\gamma} = 1 \quad \implies \quad \psi \circ \gamma = \psi \quad \implies \quad \psi \circ (\gamma - 1) = 0 \quad \implies \gamma=1.\]
    Therefore, $\theta \in \Aut(\Gamma)$. Since $\Gamma \simeq \mu_i$ for some ${i \in \{2,3,4,6\}}$, and $\Aut(\mu_i) = \langle x \mapsto x^{-1} \rangle$, this implies that either
    $\theta(\gamma)=\gamma$ or $\theta(\gamma)=\gamma^{-1}$
    for all $\gamma \in \Gamma$, which is the case $d=1$.
    The case $d>1$ is proven by induction, since
    \[ \psi^{(d+1)} \circ \gamma = \psi \circ (\psi^{(d)} \circ \gamma) = (\psi \circ \gamma^{\epsilon^d}) \circ \psi^{(d)} = (\gamma^{\epsilon^d})^\epsilon \circ \psi\circ \psi^{(d)} = \gamma^{\epsilon^{d+1}} \circ \psi^{(d+1)}, \]
    which is the inductive step. 
\end{proof}

As a consequence, the action of $\Gamma$ respects periodic elements and the depth in ancestor trees.

\begin{lemma} \label{lem:GammapreservesPer}
    For every $\gamma \in \Gamma$ and $P \in E(\kbar)$, we have
    \[ P \in \Per(\psi) \quad \iff \quad \gamma(P) \in \Per(\psi). \]
\end{lemma}
\begin{proof}
    If $P \in \Per(\psi)$, then there is $k\in \NN$ such that $\psi^{(k)}(P) = P$.
   \Cref{lem:GammaActionPsi} yields
    \[ \psi^{(2k)} \circ \gamma(P) = \gamma^{\epsilon^{2k}} \circ \psi^{(2k)}(P) = \gamma(P), \]
    therefore $\gamma(P) \in \Per(\psi)$.
    The reverse implication follows by replacing $P$ by $\gamma(P)$, and $\gamma$ by $\gamma^{-1}$.
\end{proof}

\begin{lemma}
\label{lem:PhiPreservesDist}
    Let $\epsilon \in \{\pm1\}$ be defined as in \Cref{lem:GammaActionPsi}.
    For every $\gamma \in \Gamma$, $P \in E(\kbar)$ and $Q \in \tau_P$, let $d = d_{P}(Q)$.
    Then we have
    \[ \gamma^{\epsilon^d}(Q) \in \tau_{\gamma(P)} \quad \textnormal{and} \quad d_{\gamma(P)}\big( \gamma^{\epsilon^d}(Q) \big) = d. \]
    Moreover, $\gamma(P)\in \tau_P$ if and only if $\gamma(P)=P$.
\end{lemma}
\begin{proof}
    The statement trivially holds when $d=0$, so we assume $d>0$.  
     \Cref{lem:GammaActionPsi} yields
    \begin{equation} \label{eq:reachtheroot}
         \psi^{(d)} \circ \gamma^{\epsilon^d}(Q) = \gamma^{\epsilon^{2d}} \circ \psi^{(d)}(Q) = \gamma(P).
    \end{equation}
    By \Cref{def:AncestorTree}, none of $\{ \psi^{(k)}(Q) \}_{0 \leq k < d}$ is periodic, then by \Cref{lem:GammapreservesPer} none of $\{ \psi^{(k)} \circ \gamma' (Q) \}_{0 \leq k < d}$ can be periodic, for any choice of $\gamma' \in \Gamma$.
    Thus, we conclude that $\gamma^{\epsilon^d}(Q) \in \tau_{\gamma(P)}$.
    
    We now show that $\gamma(P) \notin \{ \psi^{(k)} \circ \gamma^{\epsilon^d} (Q) \}_{0 \leq k < d}$, which implies the equality between distances.
    Let us assume by contradiction that $\psi^{(k)} \circ \gamma^{\epsilon^d} (Q) = \gamma(P)$ for some $0 \leq k < d$. 
    Then by \cref{eq:reachtheroot} we have    
    \[ \gamma(P) = \psi^{(d)} \circ \gamma^{\epsilon^d}(Q) = \psi^{(d-k)} \circ \psi^{(k)} \circ \gamma^{\epsilon^d}(Q) = \psi^{(d-k)} \circ \gamma(P), \]
    therefore $\gamma(P) = \psi^{(k)} \circ \gamma^{\epsilon^d} (Q) = \gamma^{\epsilon^{d+k}} \circ \psi^{(k)}(Q)$ is periodic.
    By \Cref{lem:GammapreservesPer}, we conclude that also $\psi^{(k)}(Q)$ was periodic, which contradicts the fact that $\tau_{P} \setminus \{P\}$ cannot contain periodic points.

For the last statement, let $\gamma(P)\in\tau_P$ and
$ n=d_P(\gamma(P))$.
Let us assume by contradiction that $n>0$. \Cref{lem:GammaActionPsi} gives
$\psi^{(n)}\circ\gamma = \gamma^{\epsilon^n}\circ\psi^{(n)}$,
which implies
\[
    \psi^{(n)}(P) = \psi^{(n)} \big( \gamma (P) \big) = \gamma^{-\epsilon^n}(P).
\]
 We also have
\begin{align*}
    \psi^{(2n)}(P)
    =
    \psi^{(n)}\bigl(\gamma^{-\epsilon^n}(P)\bigr)=
    \gamma^{-\epsilon^{2n}}\bigl(\psi^{(n)}(P)\bigr)
    =
    \gamma^{-(1+\epsilon^n)}(P).
\end{align*}
Since $\psi^{(2n)}$ commutes with $\gamma$ by \Cref{lem:GammaActionPsi}, iterating the above identity $\ord(\gamma)$ times yields
\[
    \psi^{(2n\cdot\ord(\gamma))}(P)
    =
    \gamma^{-\ord(\gamma)(1+\epsilon^n)}(P)
    =
    P.
\]
Hence $P$ is periodic, and therefore so is $\gamma(P)$ by \Cref{lem:GammapreservesPer}.
This implies that $\gamma(P)\in\tau_P\setminus\{P\}$ is a periodic point, contradicting the definition of $\tau_P$.
Thus we conclude that $n=0$, and consequently $\gamma(P)=P$.
The converse implication is trivial.
\end{proof}

\begin{lemma}
\label{lem:pipreservesdep}  
Let $P \in \S_{\pi}$. Then
\begin{itemize}
    \item if $Q \in \tau_P \cap \S_{\pi}$, then $d_P(Q)=d_{\pi(P)}(\pi(Q))$.
    \item \( 
P \in \Per(\psi) \iff \pi(P) \in \Per(\phi).\) If this is the case, let $k$ be the period of $\pi(P)$ and $\gamma \in \Gamma$ such that $\psi^{(k)}(P)=\gamma (P)$. Then, letting $\epsilon \in \{\pm1\}$ be defined as in \Cref{lem:GammaActionPsi}, the period of $P$ is $Nk$ with
\[N=\begin{cases}
    1 \qquad &\text{if $\gamma(P)=P$}\\
    2 \qquad &\text{if $\gamma(P)\neq P$ and $\epsilon^k=-1$}\\
    \min_{m \in \NN_{>0}}\{ \gamma^m(P) = P \} \qquad &\text{otherwise}.\\
\end{cases}\]
\end{itemize}
\end{lemma}

\begin{proof} It is clear that $d_P(Q) \geq d_{\pi(P)}(\pi(Q))$.
By contradiction, let us assume that the inequality is strict, i.e. $\phi^{(d)}(\pi(Q))=\pi(P)$ for some $d<d_P(Q)$.
By commutativity of the Lattès diagram, this is equivalent to saying that $\pi(\psi^{(d)}(Q))=\pi(P)$, that is $\psi^{(d)}(Q)=\gamma(P)\in \tau_P$ for some $\gamma \in \Gamma$ such that $\gamma(P) \neq P$. This cannot happen, as it would  contradict \Cref{lem:PhiPreservesDist}. Therefore, $d_P(Q)=d_{\pi(P)}(\pi(Q))$.

If $P \in \Per(\psi)$, then $\psi^{(k)}(P)=P$ for some positive integer $k$. Hence $\phi^{(k)}(\pi(P))=\pi(\psi^{(k)}(P))=\pi(P)$. Vice versa, suppose that $\pi(P)$ is periodic of period $k$. Then $\psi^{(k)}(P)=\gamma (P)$ for some $\gamma \in \Gamma$. If $\gamma(P)=P$, the statement follows immediately. Otherwise, if $\epsilon^k=-1$, \Cref{lem:GammaActionPsi} yields
\[\psi^{(2k)}(P)=\psi^{(k)}(\gamma(P))=\gamma^{\epsilon^k+1}(P)=P.\]
 If $\epsilon^k=1$,  \Cref{lem:GammaActionPsi} implies that, for every $m \in \NN$,
 \[\psi^{(mk)}(P)=\gamma^m(P),\]
 which completes the proof.
\end{proof}

\textcolor{black}{\begin{remark} \label{rmk:NdividesorderGamma}
    We note that, in the above lemma, we have
    \[ \min_{m \in \NN_{>0}}\{ \gamma^m(P) = P \} = \ord_{\Gamma/\textnormal{Stab}_{\Gamma}(P)}([\gamma]) \ | \ |\Gamma|. \]
    Thus, $N$ divides $|\Gamma|$ whenever $|\Gamma| \neq 3$ or $\epsilon = 1$.
    The latter holds for the lifted Hessian $\psi_k$ (\Cref{cor:psiAut}).
\end{remark}}

\begin{remark}
\Cref{lem:pipreservesdep} applies also to any point $P \in E(\kbar)$, by replacing $\phi$ with $\bar{\phi}:\P^1(\kbar) \to \P^1(\kbar)$, according to \Cref{defn:Lattes}. 
This can also be viewed as a consequence of the fact that $\pi$ is a surjective $|\Gamma|$-to-one semiconjugacy~\cite[§6]{milnor2006lattes}. 
\end{remark}

\begin{proposition} \label{prop:identifyancestors}
    Let $P,Q \in \S_{\pi}$ with $\pi(P) = \pi(Q)$.
    Then we have an isomorphism of ancestor trees $\tau_{P} \simeq \tau_{Q}$ in $E(\kbar)$, which restricts to an isomorphism of arborescences
    \[ \tau_{P} \cap \S_{\pi} \simeq \tau_{Q} \cap \S_{\pi}. \]
    Moreover, they project to the same arborescence $\pi(\tau_{P} \cap \S_{\pi})=\pi(\tau_{Q} \cap \S_{\pi})$ in $\P^1(\K)$.
\end{proposition}
\begin{proof}
Since $\pi(P)=\pi(Q)$, there exists $\gamma \in \Gamma$ such that $Q = \gamma (P)$. By \Cref{lem:GammaActionPsi}, there exists $\epsilon \in \{\pm 1\}$ such that $\psi \circ \gamma = \gamma^{\epsilon} \circ \psi.$ 
Letting  $d(H)=d_P(H)$ for brevity, we now show that 
    \[ \Phi : \tau_{P} \to \tau_Q, \quad H \mapsto \gamma^{\epsilon^{d(H)}}(H) \]
   is a well-defined isomorphism of ancestor trees. The map $\Phi$ is well defined by \Cref{lem:PhiPreservesDist} with  $Q=\gamma(P)$. For the same reason, we have a well-defined set-theoretical inverse
   \[ \Phi^{-1} : \tau_{Q} \to \tau_P, \quad H \mapsto \gamma^{-\epsilon^{d(H)}}(H).\]
   We now prove that $\Phi$ respects the edges. Let us consider $P_1,P_2 \in \tau_P$, with $\psi(P_1)=P_2$ and $d(P_2)=d(P_1)-1$. Hence, we obtain
   \[ \Phi(P_2)=\gamma^{\epsilon^{d(P_2)}}(P_2)=\gamma^{\epsilon^{d(P_1)-1}}\circ \psi(P_1)=\psi \circ \gamma^{\epsilon^{d(P_1)}}(P_1)=\psi \circ \Phi(P_1).\]
   Vice versa, if $\psi \circ \Phi(P_1)=\Phi(P_2)$ then 
   \[ \gamma^{\epsilon^{d(P_2)}}(P_2)=\psi \circ \gamma^{\epsilon^{d(P_1)}}(P_1)=\gamma^{\epsilon^{d(P_1)-1}} \circ \psi (P_1).\]
   Since $\Phi$ preserves the distance from the root by \Cref{lem:PhiPreservesDist}, we have $d(P_1)=d(P_2)+1$, from which we conclude $\psi(P_1)=P_2$.

   Since $\pi \circ \Phi(H)=\pi(H)$, the isomorphism $\Phi$ restricts to $\S_{\pi}$, i.e.\ $H \in \tau_P \cap \S_\pi$ if and only if $\Phi(H) \in \tau_Q \cap \S_\pi$. Finally, $\pi(\tau_{P} \cap \S_{\pi})=\pi(\tau_{Q} \cap \S_{\pi})$ and it is an arborescence because $\phi$ is a Lattès map and no cycles/loops are created by the projection of $\tau_{P} \cap \S_{\pi}$ by $\pi$ thanks to \Cref{lem:pipreservesdep}.
\end{proof}
\begin{remark}
When $\psi$ commutes with $\Gamma$, we have $\epsilon = 1$ in \Cref{lem:GammaActionPsi}, hence the isomorphism of arborescences in the above proof is simply $\Phi=\gamma$. In particular, by \Cref{cor:psiAut}, this is the case for the lifted Hessian $\psi_k$.
    \end{remark}
}

\textcolor{black}{\begin{proposition} \label{prop:GenProjectingSk}
    Let $\Gcal$ be a connected component of $\fungraph{\S_{\pi}}{\psi}$ containing periodic points.
    Then
    \begin{enumerate}
        \item \label{itm:epimorph} $\pi$ projects $\Gcal$ onto a connected component of $\fungraph{\P^1(\K)}{\phi}$.
        \item \label{itm:lemma} This projection preserves the depth of elements and maps the cycle of periodic points in $\Gcal$ into a cycle whose length is $N$ times shorter, where $N$ is given by \Cref{lem:pipreservesdep} for any periodic point $P$ of $\Gcal$.
        \item \label{itm:arbor} This projection is an isomorphism of arborescences rooted in periodic points, with the following possible exception: the branches arriving at points in $\ker(\psi\circ(\gamma-1))$ for $\gamma \in \Gamma \setminus \{1\}$ might be identified.
    \end{enumerate}
\end{proposition}
\begin{proof}
    \ref{itm:epimorph}: The reduced Lattès structure of $\phi$ implies that $\pi$ is a graph epimorphism  $\pi : \langle \S_{\pi}, \psi \rangle \to \langle \P^1(\K), \phi \rangle$ mapping connected components into connected components.
    In particular, the image of $\Gcal$ is surjectively covered precisely by
    \[ \{ \pi \circ \gamma (\Gcal) \}_{\gamma \in \Gamma} . \]
    \ref{itm:lemma}: It follows directly from \Cref{lem:pipreservesdep}.
    \\ \ref{itm:arbor}: The isomorphism of arborescences follows if the projection preserves the indegree of vertices.
    If the indegree is not preserved for a certain vertex $R$, then there are two distinct vertices $P, Q \in \mathcal{S}_\pi$ such that 
    \[\psi(P)=\psi(Q)=R \quad \text{and} \quad \pi(P)=\pi(Q).\]
    In particular, the latter equality implies that there exists $\gamma \in \Gamma \setminus\{1\}$ such that $Q=\gamma(P)$, so that the former equality  implies $P \in \ker(\psi\circ(\gamma-1))$. The whole branches arriving at $P$ and $Q$ are therefore identified by \Cref{prop:identifyancestors}.
\end{proof}
}

\section{The functional graph of \texorpdfstring{$\psi_k$}{ψk}}
\label{sec:Functionalpsik}
\textcolor{black}{Since $\Lambda_k$ and $\Hess_k$ fit into reduced Lattès diagrams by \Cref{thm:projectphi3}, the results in \Cref{sec:Sk} can be applied to $\psi_k$ to describe the Hessian action.}
In this section, we prove that the functional graph of this curve endomorphism enjoys several additional properties: its loops correspond to $2$-torsion points (\Cref{subsec:selfloops}) and the depth of its points only depends on their order (\Cref{subsec:CyclesDepth}). 
\textcolor{black}{Moreover, in \Cref{sec:projthegraph}, we refine \Cref{prop:GenProjectingSk} by characterizing those branches that are identified via $\pi_{\Hess}$ and $\pi_{\Lambda}$. }



\subsection{Loops} \label{subsec:selfloops}
The next lemma characterizes the \emph{loops} (i.e., cycles of length $1$) in $\fungraph{E_k(\overline{\K})}{\psi_k}$.

\begin{lemma} \label{lem:selfloops}
  Let $k \in \K^*$ and $P \in E_k(\overline{\K})$. Then 
    \[ \psi_k(P) = P \quad \iff \quad P \in E_k[2] = \left\{\0, \left(-\z^i\sqrt[3]{\frac{k}{4}},0\right)\right\}_{i \in \{0,1,2\}}.
    \]
 The corresponding loops in the Hessian graphs are $\infty$ and $-\frac{k}{4}$.
\end{lemma}
\textcolor{black}{\begin{proof}
    The condition $\psi_k(P) = P$ is equivalent to $P \in \ker(\psi_k-[1])$. By \Cref{defn:psik}, we have
    \[ \psi_k-[1] = [2](\rho-[1]) = [2] \rho^2 \in \End(E_k), \]
    and $\ker([2]\rho^2) = \ker([2])$, since $\rho \in \Aut(E_k)$.
    It is clear that $E_k[2] = \{\0, (-\z^i\sqrt[3]{k/4},0)\}_{i \in \{0,1,2\}}$, and these points are mapped to $\infty$ or $-\frac{k}{4}$ after $\pi_{\Hess}$.
\end{proof}}

\begin{remark}
    We note that there may be loops in $\fungraph{\P^1(\K)}{\Hess}$ (\Cref{lem:loopj}) that do not arise from loops in $\fungraph{E_k(\kbar)}{\psi}$ (\Cref{lem:selfloops}).
    This happens when the cubing map $M_3$ is not injective over $\P^1(\K)$, and $\pi_{\Hess}$ sends non-trivial cycles of $\fungraph{\S_{\Hess_k}}{\psi}$ into loops of $\fungraph{\P^1(\K)}{\Hess}$.
\end{remark}


\subsection{Cycles and depth} \label{subsec:CyclesDepth}
A point $P \in E_k(\kbar)$ is referred to as a \emph{torsion} point if $\ord(P)<\infty$, i.e., it is $m$-torsion for some $m \in \ZZ_{>0}$.

\begin{proposition} \label{prop:torsion}
    Let $k \in \K^*$. A point $P \in E_k(\overline{\K})$ is periodic in $\fungraph{E_k(\overline{\K})}{\psi_k}$ if and only if it is a torsion point and
    \[ {\gcd}\big(\ord(P),3\big)=1. \]
\end{proposition}
\begin{proof}
    By \Cref{lem:psi2} we have $\psi_k^{(2)} = [-3]$, then 
    \begin{equation} \label{eq:ord-3}
        \ord\big(\psi_k \circ \psi_k(P)\big) = \begin{cases}
        \ord(P)/3 &\textnormal{if } \ord(P) < \infty \textnormal{ and } 3 \ | \ \ord(P), \\
        \ord(P) &\textnormal{otherwise. } 
        \end{cases} 
    \end{equation}
    If $P$ is periodic there exists $e\in \NN_{>0}$ such that $\psi^{(e)}(P) = P$, then also $[(-3)^e]P = P$, so $P$ is a torsion point.
    If $e=1$, then $\psi_k(P) = P$, hence $[2]P = \0$ by \Cref{lem:selfloops}, so ${\gcd}\big(\ord(P),3\big) = 1$.
    If $e \geq 2$, then $P = \psi_k^{(e)}(P) = \psi_k^{(e-2)}([-3]P)$. 
    Since $\psi_k$ is a group endomorphism, then $\ord\big(\psi_k(P)\big) \ | \ \ord(P)$, so in particular we must have $\ord(P) = \ord([-3]P)$, hence $3 \nmid \ord(P)$ by \cref{eq:ord-3}.
    
    On the other side, if $P$ is \textcolor{black}{a torsion point} and ${\gcd}\big(\ord(P),3\big) = 1$, then there exists the multiplicative order $e \in \NN_{>0}$ of $-3$ modulo $\ord(P)$, namely $(-3)^e \equiv 1 \bmod \ord(P)$. Therefore
    \[ \psi_k^{(2e)}(P) = [(-3)^e]P = P, \]
    hence $P$ is periodic.
\end{proof}


\begin{proposition} \label{prop:Ord3}
    Let $k \in \K^*$, $P \in E_k(\overline{\K})$ be a preperiodic point and $d = d_{\textnormal{Per}(\psi_k)}(P)$. Then
    \[ \ord(P) = 3^{\lceil d/2 \rceil} N, \]
    where $N = \ord\big(\psi_k^{(d)}(P)\big)$ and ${\gcd}(N,3) = 1$.
\end{proposition}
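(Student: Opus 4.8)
The plan is to track the $3$-adic valuation of the order of $P$ along its forward orbit under $\psi_k$, using the two structural facts already available: $\psi_k^{(2)} = [-3]$ (\Cref{prop:psi2}) and the characterisation of periodicity for torsion points (\Cref{prop:torsion}). A preliminary observation is that $P$ must itself be a torsion point: since $d = d(P) < \infty$, the point $Q := \psi_k^{(d)}(P)$ is periodic, say $\psi_k^{(j)}(Q) = Q$, hence $Q = \psi_k^{(2j)}(Q) = (-3)^j Q$ forces $\big((-3)^j - 1\big)Q = \0$; so $Q$ is torsion, and as $\psi_k^{(d)}$ has finite kernel, $P$ is torsion too. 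Write $\ord(P) = 3^a M$ with $\gcd(M,3) = 1$, and set $a_n = v_3\big(\ord(\psi_k^{(n)}(P))\big)$ and $M_n$ for the prime-to-$3$ part of $\ord(\psi_k^{(n)}(P))$, so $a_0 = a$ and $M_0 = M$.

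The key local step I would establish is the behaviour of orders under a single application of $\psi_k$: since $\psi_k$ is a group homomorphism, $\ord(\psi_k(Q)) \mid \ord(Q)$, while $\ord(\psi_k^{(2)}(Q)) = \ord(-3Q) = \ord(Q)/\gcd(\ord(Q),3)$ divides $\ord(\psi_k(Q))$. Comparing prime-to-$3$ parts of these divisibilities forces $M_{n+1} = M_n$, hence $M_n = M$ for all $n$; comparing $3$-parts forces $a_{n+1} \in \{a_n, a_n-1\}$, with equality when $a_n = 0$. Independently, $\psi_k^{(2)} = [-3]$ gives the exact recursion $a_{n+2} = \max(a_n - 1, 0)$, so by induction $a_{2n} = \max(a - n, 0)$ and, writing $\psi_k^{(2m+1)}(P) = (-3)^m \psi_k(P)$, also $a_{2m+1} = \max(a_1 - m, 0)$. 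Finally, applying \Cref{prop:torsion} to each torsion point $\psi_k^{(n)}(P)$ shows that it is periodic if and only if $a_n = 0$, so $d = \min\{ n : a_n = 0\}$.

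It then remains to show $\lceil d/2 \rceil = a$ and identify $N$. From $a_{2a} = 0$ we get $d \le 2a$. For the lower bound: $a_{2m} = a - m > 0$ for every $m < a$, and $a_{2m+1} = \max(a_1 - m, 0) > 0$ whenever $2m+1 < 2a - 1$ because $a_1 \ge a - 1$ by the local step; hence $a_n > 0$ for all $n < 2a - 1$, i.e. $d \ge 2a - 1$. Therefore $d \in \{2a-1, 2a\}$ in every case (the case $a = 0$, where $P$ is already periodic and $d = 0$, is immediate), which gives $\lceil d/2 \rceil = a$. Since $a_d = 0$ and $M_d = M$, we conclude $N = \ord(\psi_k^{(d)}(P)) = M$, so $\gcd(N,3) = 1$ and $\ord(P) = 3^a M = 3^{\lceil d/2\rceil} N$.

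The only genuinely delicate point is the parity bookkeeping in the last paragraph: the orbit may take either $2a$ or $2a-1$ steps to become periodic (corresponding to $a_1 = a$ or $a_1 = a - 1$, which in turn depends on how the $3$-isogeny $\psi_k$ collapses the $3$-part of the order on the very first step), and I expect pinning this down precisely to be unnecessary — the point is to notice that both admissible values of $a_1$ yield $\lceil d/2\rceil = a$, so the statement holds uniformly without resolving that ambiguity.
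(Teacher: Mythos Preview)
Your proof is correct and follows essentially the same approach as the paper: both arguments rest on $\psi_k^{(2)}=[-3]$ together with \Cref{prop:torsion}, and both boil down to the observation that the $3$-valuation of $\ord(\psi_k^{(n)}(P))$ drops by exactly one every two steps until it reaches zero. The paper case-splits on the parity of $d$ and computes $\ord(\psi_k^{(d)}(P))$ directly, whereas you track the whole sequence $(a_n)$ and read off $d\in\{2a-1,2a\}$ at the end; your version also makes explicit the preliminary step that $P$ is torsion, which the paper leaves implicit.
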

\begin{proof}
    Since $\psi_k^{(d)}(P)$ is periodic, by \Cref{prop:torsion} it is sufficient to prove that
    \begin{equation} \label{eq:ordP}
        \ord(P) = 3^{\lceil d/2 \rceil} \ord\big(\psi_k^{(d)}(P)\big).
    \end{equation} 
    We recall again that $\psi_k^{(2)} = [-3]$ by \Cref{lem:psi2}.
    If $d = 2d'$ is even, then by the minimality of $d$ and \cref{eq:ord-3}, it follows that
    \[ \ord\big(\psi_k^{(d)}(P)\big) = \ord\big([(-3)^{d'}]P\big) = \ord(P)/3^{d'}, \]
    which proves \cref{eq:ordP}.
    If $d = 2d'+1$ is odd instead, the same argument shows that
    \[ \ord\big(\psi_k^{(2d')}(P)\big) = \ord\big((-3)^{d'}P\big) = \ord(P) / 3^{d'}. \]
    However, by \Cref{prop:torsion} and the minimality of $d$, we also have that $3\nmid\ord\big(\psi_k^{(d)}(P)\big)$, while ${3 \mid \ord\big(\psi_k^{(d-1)}(P)\big)}$. Therefore
    \[ \ord\big(\psi_k^{(d)}(P)\big) = \ord\big(\psi_k^{(2d')}(P)\big) / 3 = \ord(P) / 3^{d'+1}, \]
    which is \cref{eq:ordP} for $d$ odd.
\end{proof}


\subsection{Projecting the Hessian} \label{sec:projthegraph}
\textcolor{black}{In this section we specialize \Cref{prop:GenProjectingSk} to describe the projection of $\fungraph{\S_{\Lambda_k}}{\psi_k}$ (resp.\ $\fungraph{\S_{\Hess_k}}{\psi_k}$) through $\pi_{\Lambda}$ (resp.\ $\pi_{\Hess}$).
The main improvement we obtain for the Hessian map, compared to a generic Lattès map fitting into a reduced Lattès diagram, is the full characterization of the arborescences that are identified after the projection.}


\begin{lemma} \label{lem:ProjPi}
    Let $k \in \K^*$ and $P,Q \in \S_{\Lambda_{k}}$ such that $P \neq Q$ and $P \neq \psi_k(P) = \psi_k(Q) \neq Q$. Then
    \[ \pi_{\Lambda}(P) = \pi_{\Lambda}(Q) \iff [2] \psi_k(P) = \0.\]
\end{lemma}
\begin{proof}    
    If $\pi_{\Lambda}(P) = \pi_{\Lambda}(Q)$ then $P = - Q$, hence $\psi_k(P) = \psi_k(Q) = - \psi_k(P)$, so $[2] \psi_k(P) = \0$.
    Conversely, if $[2] \psi_k(P) = \0$, then $\psi_k(P) = -\psi_k(P) = \psi_k(-P)$, hence $P$ and $-P$ have the same image under $\psi_k$. However, by \Cref{lem:selfloops} we also have that $\psi_k\big( \psi_k(P) \big) = \psi_k(P)$ and $P \neq -P$. 
    Since \textcolor{black}{$\deg \psi_k = 3$}, then 
    \[ Q \in \psi_k^{-1}\big( \psi_k(P) \big) = \{P, - P, \psi_k(P)\}. \]
    Thus, since $Q \neq \psi_k(P)$, we conclude that $Q = -P$, so $\pi_{\Lambda}(P) = \pi_{\Lambda}(Q)$.
\end{proof}

\begin{lemma} \label{lem:ProjM3Pi}
    Let $k \in \K^*$ and $P,Q \in \S_{\Hess_k}$ such that $P \neq Q$ and $P \neq \psi_k(P) = \psi_k(Q) \neq Q$. Then
    \[ \pi_{\Hess}(P) = \pi_{\Hess}(Q) \iff [2] \psi_k(P) = \0 \quad \textnormal{or} \quad \psi_k^{(2)}(P) = \0.\]
\end{lemma}
\begin{proof} 
    If $\pi_{\Hess}(P) = \pi_{\Hess}(Q)$ then there is $\Phi \in \Aut(E_k)$ such that $P = \Phi(Q)$. By \Cref{cor:psiAut} we have
    \[ \Phi \big( \psi_k (P) \big) = \psi_k\big(\Phi(P) \big) = \psi_k(Q) = \psi_k(P). \]
    Since $P \neq Q$, the point $\psi_k(P)$ is then fixed by a non-trivial element of $\Aut(E_k)$, namely it has a zero coordinate. Thus, it is either a $2$-torsion point or $\pm T_k$.
    Conversely, if $[2] \psi_k(P) = \0$, then $\pi_{\Lambda}(P) = \pi_{\Lambda}(Q)$ as in \Cref{lem:ProjPi}.
    If, instead, $P =(x,y)$  with $x \neq 0 \neq y$ and $\psi_k^{(2)}(P) = \0$, then $\{ (\z^{i} x, \pm y) \}_{i \in \{0,1,2\}}$ are six distinct points.
    Since \textcolor{black}{$\deg \psi_k = 3$}, we obtain
    \[ Q \in \psi_k^{-1}\big( \psi_k(P) \big) \subseteq \psi_k^{-1}(\{\pm T_k\}) = \{ (\z^{i} x, \pm y) \}_{i \in \{0,1,2\}}, \]
    which proves that $Q = \Phi(P)$ for some $\Phi \in \Aut(E_k)$, hence $\pi_{\Hess}(P) = \pi_{\Hess}(Q)$.
\end{proof}

The coverings of $\S_{\Lambda_k}$ and $\S_{\Hess_k}$ established in \Cref{cor:Spi4Hessian} are almost disjoint, \textcolor{black}{as proved in \Cref{prop:newdisjpartsSk}}.
The next lemma shows that this remains true even after projecting to $\P^1(\K)$.
We will prove it only for $\pi_{\Hess}$, which immediately implies an analogous result for $\pi_{\Lambda}$.

\begin{lemma} \label{lem:disjointTwistsProjs}
    Let $k,u,v \in \K^*$ with $\frac{u}{v} \not\in (\K^*)^6$.
    Then, for any choice of $u^{\frac{1}{6}}, v^{\frac{1}{6}} \in \kbar$, we have
    \[ \pi_{\Hess} \Big( \phi_{u^{-\frac{1}{6}}}\big( E_{uk}(\K)\big) \Big) \cap \pi_{\Hess} \Big( \phi_{v^{-\frac{1}{6}}}\big( E_{vk}(\K)\big) \Big) \subseteq \left\{ 0,-\frac{k}{4},\infty \right\}.
    \]
\end{lemma}
\begin{proof}
    If $P \in \phi_{u^{-\frac{1}{6}}}\big( E_{uk}(\K)\big) \cap \phi_{v^{-\frac{1}{6}}}\big( E_{vk}(\K)\big)$, then one of the coordinates of $P$ is $0$ by \textcolor{black}{\Cref{prop:newdisjpartsSk}}, hence $\pi_{\Hess}(P) \in \{0, -\frac{k}{4}, \infty\}$ by \Cref{lem:selfloops}.
    Let us assume by contradiction that there are points
    \[ P = (x,y) \in \phi_{u^{-\frac{1}{6}}}\big( E_{uk}(\K)\big) \quad \textnormal{and} \quad Q \in \phi_{v^{-\frac{1}{6}}}\big( E_{vk}(\K)\big), \]   
    having all non-zero coordinates and such that $\pi_{\Hess}(P) =\pi_{\Hess}(Q)$, i.e., $Q \in \{(\z^i x, \pm y)\}_{i \in \{0,1,2\}}$. 
    Thus, we have
    \[\begin{cases}
        \phi_{u^{\frac{1}{6}}}(P) \in E_{uk}(\K) &\implies u^{\frac{1}{3}}x, u^{\frac{1}{2}}y \in \K^*, \\
        \phi_{v^{\frac{1}{6}}}(Q) \in E_{vk}(\K) &\implies v^{\frac{1}{3}}\z^i x, \pm v^{\frac{1}{2}}y \in \K^*. 
    \end{cases} \]
    The above conditions imply $\frac{u}{v} \in (\K^*)^2 \cap (\K^*)^3 = (\K^*)^6$, which contradicts the hypothesis.
\end{proof}
\begin{proposition} \label{prop:projectingSk}
    Let $\Gcal$ be a connected component of $\fungraph{\S_{\Lambda_k}}{\psi_k}$ (resp.\ $\fungraph{\S_{\Hess_k}}{\psi_k}$) containing periodic points.
    Then $\pi_{\Lambda}$ (resp.\ $\pi_{\Hess}$) projects $\Gcal$ onto a connected component of $\fungraph{\P^1(\K)}{\Lambda_k}$ (resp.\ $\fungraph{\P^1(\K)}{\Hess_k}$).
    This projection maps the cycle of periodic points in $\Gcal$ into a cycle whose length is $1$ or $2$ (resp.\ $1,2,3$, or $6$) times shorter, and preserves the depth of elements.
    Finally, it is an isomorphism of arborescences rooted in periodic points, with the following exceptions:
    \begin{itemize}
        \item the branches arriving at $2$-torsion points are identified, and
        \item (only for $\pi_{\Hess}$) the branches arriving at $\pm T_k$ are identified.
    \end{itemize}
\end{proposition}
\begin{proof}
    The first part of the statement follows immediately from the reduced Lattès structure of $\Lambda_k$ and $\Hess_k$ (\Cref{thm:projectphi3}) and parts \ref{itm:epimorph} and \ref{itm:lemma} of \Cref{prop:GenProjectingSk}. 
    \textcolor{black}{In particular, the integer $N$ from part \ref{itm:lemma} arises from \Cref{lem:pipreservesdep}, and always divides $|\Gamma|$ by \Cref{rmk:NdividesorderGamma}.}
    Finally, the isomorphism of arborescences follows if the projection preserves the indegree of vertices.
    By \Cref{lem:ProjPi} (resp.\ \Cref{lem:ProjM3Pi}) this holds for all vertices of $\S_{\Lambda_k}$ (resp.\ $\S_{\Hess_k}$) not arriving at $2$-torsion points (resp.\ or $\pm T_k$), while the ancestors of these exceptional points are identified and, consequently, their whole branches are identified by \Cref{prop:identifyancestors}.
\end{proof}


Some concrete examples of the projection properties prescribed by \Cref{prop:projectingSk} are displayed in \Cref{App:examples} (\Cref{fig:hessian29psi,fig:hessian29,fig:hessian31psi,fig:hessian31}).


\section{Hessian graphs over finite fields}
\label{sec:HessianFF}

In this section, $\K$ will be a finite field $\F_q$, for a given prime power $q=p^r \in \ZZ$, with $p \neq 2,3$.
In this case, to every elliptic curve $E$ over $\F_q$ we can associate its \emph{trace} (of Frobenius) \cite[Rmk.\,V.2.6]{silverman2007arithmetic}:
\[ \tr(E) = q+1-\left|E(\F_q)\right|. \]
Moreover, as observed in \Cref{rmk:extensions}, \textcolor{black}{both $\fungraph{\P^1(\K)}{\Hess_k}$ and $\fungraph{\P^1(\K)}{\Lambda_k}$} can be understood by means of small extensions of $\F_q$.
Indeed, according to \Cref{defn:SlSh}, one can directly check that
\begin{align*}
    \S_{\Lambda_k} &= \{ (x,y) \in E_k(\F_{q^2}) \ | \ x \in \F_q \} \cup \{\0\}, \\
    \S_{\Hess_k} &= \{ (x,y) \in E_k(\F_{q^6}) \ | \ x^3 \in \F_q \} \cup \{\0\}.
\end{align*}
In this setting, we have effective criteria for counting the roots of cubics \cite[§2-3]{dickson:critIrrFinFld}, which we can employ to characterize the indegree (with respect to $\psi_k$) of points whose first entry belongs to a given field.
Such an indegree notably depends only on their second entry.

\begin{proposition} \label{prop:indegree}
    Let $k \in \F_q^*$ and $\textcolor{black}{P \in } \S = \S_{\Lambda_k} \setminus\{\0\}$. 
    Then
    \[ |(\psi_k|_{\S})^{-1}(P)| = 1 \iff [2]P\neq\0 \quad \text{and} \quad  \begin{cases}
            P \notin E_k(\F_q) &\quad \text{if $-3k \in (\F_q)^2$},\\
            P \in E_k(\F_q) &\quad \text{if $-3k \notin (\F_q)^2$}.\\
        \end{cases} \] 
\end{proposition}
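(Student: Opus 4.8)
The plan is to count preimages under $\psi_k$ inside $E_k(\F_{q^2})$ first, and then determine which of the (generically three) preimages of a point $P \in \S_k(\F_q)$ actually lie in $\S_k(\F_q)$, i.e.\ have first coordinate in $\F_q$. Recall from the commutative diagram after \Cref{thm:projectphi3} that $\psi_k$ descends to $\S_k(\F_q)/\pm$ and that $\pi \circ \psi_k \circ \iota = \Psi_k$; since $\Psi_k$ is a degree-$3$ rational map on $\P^1(\F_q)$, a point $P$ has indegree $1$ under $\psi_k|_{\S}$ precisely when the fiber $\Psi_k^{-1}(\pi(P))$ is a single point of $\P^1(\F_q)$, which (the discriminant being nonzero for $\mathsf{char}\neq 2,3$) happens exactly when $\Psi_k$ is ramified above $\pi(P)$ with the other point of the fiber not being $\F_q$-rational — but the cleaner route is to work directly on the curve.

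First I would fix $P \in \S_k(\F_q)$ with $\psi_k(Q) = P$ for some $Q \in E_k(\F_{q^2})$; by \Cref{thm:structure} (or simply because $\ker\psi_k = \{\0, \pm T_k\}$ has order $3$) the full set of preimages in $E_k(\F_{q^2})$ is $\{Q, Q + T_k, Q - T_k\}$, provided $T_k \in E_k(\F_{q^2})$ — which holds iff $\sqrt{k} \in \F_{q^2}$, and indeed $-3k \in (\F_q)^2 \iff \sqrt{k}\in\F_q$ or both $\sqrt{-3},\sqrt{k}\notin\F_q$, so one must be slightly careful; in all cases $T_k$ and its $\psi_k$-action are controlled by whether $\sqrt{k}\in\F_q$. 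The key point is that $\S_k(\F_q)$ consists exactly of those $R \in E_k(\F_{q^2})$ with $x(R) \in \F_q$, equivalently $R \in E_k(\F_q)$ together with $R \in E_k^{\mathrm{tw}}$-type points whose first coordinate is still rational, i.e.\ the points fixed up to sign by $q$-Frobenius $\mathrm{Frob}_q$. So the question becomes: among $Q, Q\pm T_k$, how many $R$ satisfy $\mathrm{Frob}_q(R) = \pm R$? One uses that $\mathrm{Frob}_q(T_k) = \left(\tfrac{k}{q}\right) T_k = \pm T_k$ according to whether $k$ is a square in $\F_q$, and that $\mathrm{Frob}_q$ acts compatibly with the three-element translation, to see that exactly one of the three preimages lies in $\S_k(\F_q)$ in some cases and all three do in others.

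The cleanest bookkeeping is via the action of $\psi_k$ on $T_k$ combined with Frobenius. Since $P \in \S_k(\F_q)$, we have $x(P) \in \F_q$, hence $\mathrm{Frob}_q(P) \in \{P, -P\}$. Writing $P = \psi_k(Q)$ and applying $\mathrm{Frob}_q$, we get $\psi_k(\mathrm{Frob}_q Q) \in \{\psi_k(Q), \psi_k(-Q)\} = \{\psi_k(\pm Q)\}$, so $\mathrm{Frob}_q(Q) \in \{\pm Q + c T_k : c \in \{0,1,2\}\}$. Thus $\mathrm{Frob}_q$ permutes the six points $\{\pm Q, \pm Q \pm T_k\}$ (equivalently, acts on the three $\pm$-classes of preimages), and one must count the fixed $\pm$-classes. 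This is governed by two binary invariants: whether $\left(\tfrac{k}{q}\right) = 1$ (i.e.\ whether $T_k \in E_k(\F_q)$, equivalently $\mathrm{Frob}_q T_k = T_k$ vs.\ $-T_k$) and whether $\mathrm{Frob}_q Q = +Q + cT_k$ or $-Q + cT_k$; a short case analysis — the main computational obstacle, but genuinely short — shows the number of Frobenius-stable $\pm$-classes among the three preimages is $1$ exactly in the asserted configuration, where the dependence collapses to: $P \in E_k(\F_q)$ or not, toggled by the sign of $-3k$ as a square. Concretely, when $-3k \in (\F_q)^2$ the ramified-type behaviour forces $P \in E_k(\F_q)$ to have all three preimages rational-first-coordinate and $P \notin E_k(\F_q)$ to have exactly one; when $-3k \notin (\F_q)^2$ the roles swap. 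I would finish by cross-checking against \Cref{prop:psi2} (so that $\psi_k^{(2)} = [-3]$ pins down orders and hence the parity of the number of $\F_q$- versus $\F_{q^2}$-points involved) and against the small example over $\F_{17}$ in \Cref{fig:hessian17} to make sure the square/non-square dichotomy is oriented correctly.
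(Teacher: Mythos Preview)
Your Frobenius-on-the-fibre approach can in principle be made to work, but you have not actually carried it out: the ``short case analysis'' that you defer is the entire content of the proof, and the collapse of the answer to the square class of $-3k$ (rather than of $k$) does not fall out without doing it. There is also a genuine subtlety you do not address: when $q\equiv 2\pmod 3$ the endomorphism $\psi_k$ is \emph{not} defined over $\F_q$ (its second coordinate involves $\sqrt{-3}$), so $\mathrm{Frob}_q$ does not commute with $\psi_k$ but instead satisfies $\mathrm{Frob}_q\circ\psi_k = -\psi_k\circ\mathrm{Frob}_q$. This extra sign must be tracked through your permutation of $\{\pm Q,\pm Q\pm T_k\}$ and materially affects the bookkeeping. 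You also assume a preimage $Q\in E_k(\F_{q^2})$ exists, which fails for the leaves of the $\psi_k$-graph.

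The paper's argument avoids all of this. Since the first coordinate of $\psi_k$ is $x\mapsto -(x^3+k)/(3x^2)$, the preimages of $P=(x_P,y_P)$ in $\S$ are governed by the $\F_q$-roots of the cubic $x^3+3x_Px^2+k$; by Dickson's criterion this cubic has exactly one $\F_q$-root if and only if its discriminant is a non-square in $\F_q$. That discriminant is $-27k(4x_P^3+k)=(6y_P)^2(-3k)$, whose square class visibly depends only on whether $y_P\in\F_q$ (i.e.\ whether $P\in E_k(\F_q)$) and on the square class of $-3k$. You alluded to this route in your first paragraph and then dismissed it; in fact it is a three-line proof, whereas your Frobenius approach needs several case distinctions and currently has gaps.
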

\begin{proof}
    Let $P = (x_P,y_P)$.
    Since $\psi_k$ is linear in the second entry, the fiber $(\psi_k|_{\S})^{-1}(P)$ only depends on the number of solutions of
    \[ x^3+3x_Px^2+k \in \F_q[x]. \]
    By~\cite[§2]{dickson:critIrrFinFld}, such an equation has precisely one solution over $\F_q$ if and only if the discriminant of this cubic polynomial, which is
    \[ -108 k \left(x_P^3+\frac{k}{4}\right) = (6y_P)^2(-3k), \]
    is not a square in $\F_q$.
    This happens if and only if
    \begin{equation*}
    y_P\neq 0 \quad \text{and} \quad
        \begin{cases}
            y_P \notin \F_q &\quad \text{if $-3k$ is a square in $\F_q$},\\
            y_P \in \F_q &\quad \text{if $-3k$ is not a square in $\F_q$},
        \end{cases}
    \end{equation*}
    which proves the statement.
\end{proof}

In the remaining, we aim at detailing the structure of $\langle \P^1(\F_q), \Hess \rangle$, therefore we fix $k = -6912$ (\Cref{rmk:psiHess}), and we drop it from our notation.
The Lattès structure is then given by the curve
\[ E \ : \ y^2 = x^3-1728, \]
and its endomorphism $\psi$ with kernel generated by $T = (0,24\sqrt{-3})$.
Besides, $-3k = 144^2$ is a square regardless of $\F_q$.
Thus, by \Cref{prop:indegree}, for every $(x,y) \in \S_{\Lambda}$, we have
\[ (\psi|_{\S_{\Lambda}})^{-1}(x,y) = 1 \iff y \not\in \F_q. \]

\begin{remark}
\label{rmk:selfLoops}
    By \Cref{lem:selfloops}, with our choice of $k$, the loops in $\fungraph{E(\F_{q^2})}{\psi}$ are
    \[ \0, \quad (12,0), \quad (-6+6\sqrt{-3},0), \quad (-6-6\sqrt{-3},0), \]
    which correspond to the loops $\textcolor{black}{\infty}, 1728$ in $\fungraph{\P^1(\F_q)}{\Hess}$. 
\end{remark}

\begin{remark} 
    We investigate the dynamics of $\Hess$ instead of $\Lambda$ because it potentially contains more information, as observed in \Cref{rmk:lambdainj}.
    More precisely, we have a cubic immersion
    \[ \fungraph{\P^1(\F_q)}{\Lambda} \xrightarrow[]{M_3 \circ \Phi_{-4}} \fungraph{\P^1(\F_q)}{\Hess}, \]
    which is an isomorphism if and only if $q \equiv 2 \bmod 3$ (see \Cref{sec:q2mod3}).
\end{remark}

Hessian graphs $\fungraph{\P^1(\F_q)}{\Hess}$ behave differently depending on $q \bmod 3$.
In \Cref{subsec:leavesloops} we give general results over finite fields, while in the following sections we examine the two cases separately.


\subsection{Indegrees} \label{subsec:leavesloops}

We prove that the leaves of Hessian graphs precisely correspond to curves with odd traces.
Moreover, concerning the covering of $\S_{\Hess}$ as in \Cref{cor:Spi4Hessian}, \textcolor{black}{these curves} always arise (via $\pi_{\Hess}$) from points belonging to (the isomorphic image of) cubic twists of $E(\F_q)$.

\begin{lemma} \label{lemma:trace2}
    Let $j \in \F_q^*$ and $E_{(j)}$ be any elliptic curve with that $j$-invariant.
    Then
    \[ 2 \ | \ \tr(E_{(j)}) \quad \iff \quad \Hess^{-1}(j) \neq \emptyset. 
    \]
\end{lemma}
\begin{proof}
    The equivalence follows for every $j \neq 1728$ by \Cref{prop:j2tors}.
    However, any $E_{(1728)}$ has at least one $2$-torsion point (hence, even order), and $\Hess^{-1}(1728) = \{1728, -13824\}$ by \Cref{lem:multRoots}, so the equivalence also holds for $j = 1728$.
\end{proof}

\begin{theorem} \label{thm:indeg1}
    Let $j \in \F_q \setminus \{0,1728\}$ and $u$ be a generator of $\F_q^* / (\F_q^*)^6$. Then
    \[ |\Hess^{-1}(j)| = 1 \quad \iff 
    \quad j-1728 \not\in (\F_q^*)^2 \quad \iff 
    \quad j \in \bigcup_{h \in \{1,3,5\}} \pi_{\Hess} \left( \phi_{u^{-\frac{h}{6}}}\big( E_{-6912u^h}(\F_q)\big) \right). \]
\end{theorem}
\begin{proof}
    Let $R = (x,y) \in \S = \S_{\Hess}$ be such that $\pi_{\Hess}(R) = x^3 = j$, hence ${y = \sqrt{j - 1728} \in \F_{q^2}}$.
    Since $j \not\in \{ 1728, \infty\}$, we have $[2]R \neq \0$ and $\psi(R) \neq R$ by \Cref{lem:selfloops}.
    From $j \neq 0$ we also get ${\psi(R) \neq \0}$.
    Thus, by applying \Cref{lem:ProjM3Pi}, we obtain $|\Hess^{-1}(j)| \geq |(\psi|_{\S})^{-1}(R)|$.
    We notice that $|(\psi|_{\S})^{-1}(R)|$ does not depend on the choice of $R \in \pi_{\Hess}^{-1}(j)$ by \Cref{prop:identifyancestors} and \textcolor{black}{by the fact that $\pi$ preserves periodicity (\Cref{lem:pipreservesdep}).
    By \Cref{prop:GenProjectingSk}}, for every other $R' \in \pi_{\Hess}^{-1}(j)$, we have $\pi_{\Hess}(\tau_{R'} \cap \mathcal{S}) =\pi_{\Hess}(\tau_{R} \cap \mathcal{S})$, 
    hence in particular $|\Hess^{-1}(j)| \leq |(\psi|_{\S})^{-1}(R)|$.
    %
    Thus, we proved that $|\Hess^{-1}(j)| = |(\psi|_{\S})^{-1}(R)|$.
    
    We now show that
    \begin{equation} \label{eq:iffj1728}
        |(\psi|_{\S})^{-1}(R)| = 1 \quad \iff \quad j-1728 \not\in (\F_q^*)^2.
    \end{equation} 
    Suppose first that $j-1728 \in (\F_q^*)^2$, i.e., $y \in \F_q^*$.
    If $(\psi|_{\S})^{-1}(R) = \emptyset$, there is nothing to prove.
    Given a preimage $(\alpha,\beta) \in \S$ of $R$, by definition of $\psi$ we have
    \[ y = -\beta \frac{\alpha^3+13824}{3\sqrt{-3}\alpha^3} \in \F_q^* \quad \implies \quad  \sqrt{-3}\beta \in \F_q^*. \]
    From the addition formulae of $E$, one can straightforwardly check that the first entry of $(\alpha,\beta) \pm T$ is
    \textcolor{black}{\[ \mp \frac{48 \sqrt{-3}\beta}{\alpha^2}, \]}
    whose cube is in $\F_q$, since $\alpha^3 \in \F_q^*$.
    Therefore, $(\alpha,\beta)\pm T \in (\psi|_{\S})^{-1}(R)$, proving that ${|(\psi|_{\S})^{-1}(R)| > 1}$.
    To prove the other implication in \cref{eq:iffj1728}, \textcolor{black}{let us assume that $y \notin \F_q^*$.}
    By \cite[§2]{dickson:critIrrFinFld}, there is a unique root $\alpha \in \F_{q^3}$ of the polynomial $g(z)=z^3+3xz^2-6912$, as in \Cref{prop:indegree}.
    Since $\psi$ is an endomorphism of $E(\F_{q^6})$, there is a unique $\beta \in \F_{q^6}$ such that $(\alpha,\beta) \in E(\F_{q^6})$ and $\psi(\alpha,\beta) = R$.
    We only need to prove that \textcolor{black}{$(\alpha,\beta) \in \S$, namely} $\alpha^3 \in \F_q$.
    \textcolor{black}{Since $\F_q \subseteq \F_{q^3}$ is determined by the fixed points of the $q$-Froebenius, for every} $\gamma \in \F_{q^3}$ we have
\begin{equation}
\label{eqn:cubeFq}
         \gamma^3 \in \F_q \quad \iff \quad \textcolor{black}{(\gamma^q)^3 = (\gamma^3)^q = \gamma ^3 \quad \iff} \quad  \exists \ i \in \{0,1,2\} \ : \ \gamma^q = \z^i \gamma.
\end{equation}
    Therefore, we have
    \[ g(\alpha)^q = \alpha^{3q}+3\z^ix\alpha^{2q}-6912 = (\z^{2i}\alpha^q)^{3}+3x (\z^{2i}\alpha^q)^{2}-6912=g(\z^{2i}\alpha^q), \]
    so that $\alpha$ and $\z^{2i}\alpha^q$ are both roots of $g$. Two cases can occur (depending on $q \bmod 3$):
    \begin{itemize}
        \item if $\z\in \F_q$, then $\z^{2i}\alpha^q\in \F_{q^3}$ and, since $\alpha$ was the unique root of $g$ in $\F_{q^3}$, then $\alpha^q =\z^i \alpha$. Therefore, by \cref{eqn:cubeFq}, we conclude $\alpha^3 \in \F_q$.
        \item if $\z \notin \F_q$, then \textcolor{black}{the cubing is invertible over $\F_q$, hence $t^3-x^3 = (t-x)(t-\z x)(t-\z^2 x) \in \F_q[t]$ has precisely one root in $\F_q$.
        This implies that $x \in \F_{q^2}$ and $g(z)$ has its coefficients over $\F_{q^2}$. Therefore, $\alpha \in \F_{q^2}\cap \F_{q^3}=\F_q$} by \cite[§2]{dickson:critIrrFinFld}.
    \end{itemize}
    In both cases $(\alpha, \beta)\in \S$, proving~\eqref{eq:iffj1728}.
    
    Finally, we prove the following equivalence:
    \begin{equation} \label{eq:lastequiv}
        y \notin \F_q^* \quad \iff \quad (x,y) \in \bigcup_{ \substack{i \in \{0,1,2\}\\h \in \{1,3,5\}}} \phi_{\z^iu^{-\frac{h}{6}}}\big( E_{-6912u^h}(\F_q)\big).
    \end{equation}
    \textcolor{black}{By \Cref{cor:Spi4Hessian}, there is} $h \in \{0,\dots,5\}$ and $i \in \{0,1,2\}$ such that $R \in \phi_{\z^iu^{-\frac{h}{6}}}\big( E_{-6912u^h}(\F_q)\big)$, i.e.,\ there is a point $(x',y') \in E_{-6912u^h}(\F_q)$ such that $(x,y)=(\z^{2i}u^{-\frac{h}{3}}x', u^{-\frac{h}{2}}y')$.
    Since $y'\in \F_q$ and $u$ generates $\F_q^* / (\F_q^*)^6$, clearly $y=u^{-\frac{h}{2}}y'$ lies in $\F_q$ if and only if $h$ is even \textcolor{black}{(regardless of $i$)}, which proves \textcolor{black}{\cref{eq:lastequiv}.
    This yields the last equivalence in the statement, since
    \[ \pi_{\Hess}^{-1}\left( \bigcup_{h \in \{1,3,5\}} \pi_{\Hess} \left( \phi_{u^{-\frac{h}{6}}}\big( E_{-6912u^h}(\F_q)\big) \right) \right) = \bigcup_{ \substack{i \in \{0,1,2\}\\h \in \{1,3,5\}}} \phi_{\z^iu^{-\frac{h}{6}}}\big( E_{-6912u^h}(\F_q)\big), \]
    which concludes the proof.}
\end{proof}


\subsection{\texorpdfstring{$q \equiv 1 \bmod{3}$}{q=1 mod 3}} \label{sec:q1mod3}

In this section, we consider Hessian graphs over finite fields $\F_q$ with $q \equiv 1 \bmod 3$.
In this case, we crucially have $\z \in \F_q$, hence $\sqrt{-3} \in \F_q$ and $\psi$ is defined over $\F_q$. 

\begin{lemma} \label{lem:OrdTwists}
    Let $q \equiv 1 \bmod 3$ and $u$ be a generator of $\F_q^* / ( \F_q^* )^6$. 
    For every $i \in \{0,\dots,5\}$, we have
    \[
        \begin{cases}
            |E^{(u^i)}(\F_q)| \equiv 0 \bmod{3} &\textnormal{if } 2 \ | \ i, \\
            |E^{(u^i)}(\F_q)| \equiv 1 \bmod{3} &\textnormal{otherwise}.
        \end{cases}
    \]
\end{lemma}
\begin{proof}
    By \Cref{lemma:twists}, the cubic twists of $E$ are defined, for $j \in \{0,1,2\}$, by 
    \[ E^{(u^{2j})} \ : \ y^2 = x^3 - 1728 u^{2j}. \]
    In particular, each $E^{(u^{2j})}$ has a $3$-torsion point $(0,24\sqrt{-3}u^{j})$, therefore $|E^{(u^{2j})}(\F_q)| \equiv 0 \bmod 3$, so $\tr(E^{(u^{2j})}) \equiv 2 \bmod 3$.
    The other twists are the proper quadratic twists of the above curves, hence their traces are $-2 \bmod 3$ by~\cite[Prop.\,13.1.10]{husemoller:ellipticCurves}, therefore their order modulo $3$ is $1$.
\end{proof}

\begin{lemma} \label{lem:Skcoverq1}
    Let $q \equiv 1 \bmod 3$.
    For every $u \in \F_q^* / ( \F_q^* )^6$, we have
    \[ \fungraph{ \phi_{u^{-\frac{1}{6}}}\big( E_{-6912 u}(\F_q)\big) }{\psi} \simeq \fungraph{ E_{-6912 u}(\F_q) }{\psi_{-6912 u}}. \]
\end{lemma}
\begin{proof}
    By \Cref{lem:comDiag}, the curve isomorphism $\phi_{u^{-\frac{1}{6}}} : E_{-6912 u} \to E$ defines an isomorphism of functional graphs
    \[ \fungraph{ E_{-6912 u} }{\psi_{-6912 u}} \simeq \fungraph{ E }{\psi}. \]
    Since $\sqrt{-3} \in \F_q$, then $-27 \in (\F_{q}^*)^6$, therefore the above isomorphism restricts to $\F_q$-rational points by \Cref{lem:Spartsclosed}.    
\end{proof}

\begin{theorem} \label{thm:structureq=1}
    Let $q \equiv 1 \bmod 3$.
    The Hessian graph $\fungraph{\P^1(\F_q)}{\Hess}$ is the union of six subgraphs, which can intersect only in $0,1728,\infty \in \P^1(\F_q)$.    
    For each of these subgraphs, there exists $m \in \NN$ such that its connected components consist of cycles, whose vertices are the roots of arborescences $\T_3^m$, with the following modifications:
    \begin{itemize}
        \item The arborescences $\T_3^m$ rooted in $\infty$ are pruned of one node at depth $1$ if $m \geq 1$, and of two additional nodes at depth $2$ if $m \geq 2$.
        \item The arborescences $\T_3^m$ rooted in $1728$ are pruned of one node at depth $1$ if $m \geq 1$.
    \end{itemize}
    Moreover, three of these subgraphs have $m=0$, two of them have $m=1$, and the last one has $m>1$.
\end{theorem}
\begin{proof}
    By \Cref{lem:Spartsclosed}, since $\sqrt{-3} \in \F_q$, the map $\psi$ is respects the covering of $\S_{\Hess}$ given by \Cref{cor:Spi4Hessian}.
    Therefore, we have
    \[ \fungraph{\S_{\Hess}}{\psi} = \bigcup_{u \in \K^*/(\K^*)^6} \fungraph{\phi_{u^{-\frac{1}{6}}}\big(E_{-6912u}(\F_q)\big)}{\psi}. \]
    Moreover, by \Cref{lem:Skcoverq1}, we have
    \[ \fungraph{\phi_{u^{-\frac{1}{6}}}\big(E_{-6912u}(\F_q)\big)}{\psi} \simeq \fungraph{ E_{-6912 u}(\F_q) }{\psi_{-6912 u}}. \]
    By \Cref{thm:structure}, 
    each $\fungraph{ E_{-6912 u}(\F_q) }{\psi_{-6912 u}}$ is made of cycles, whose vertices are roots of an arborescence $\T_3^m$ for some $m \in \NN$.
    By \Cref{lem:OrdTwists} and \Cref{prop:Ord3}, we necessarily have $m=0$ for three of them, and $m>0$ for the remaining ones.

    We now project this structure to the underlying Hessian graph.
    By \Cref{prop:projectingSk}, we know that 
    \[ \fungraph{\P^1(\F_q)}{\Hess} = \pi_{\Hess}\big( \fungraph{\S_{\Hess}}{\psi} \big) = \bigcup_{u \in \F^*/(\F^*)^6} \pi_{\Hess}\Big( \fungraph{\phi_{u^{-\frac{1}{6}}}\big(E_{-6912u}(\F_q)\big)}{\psi} \Big). \]
    Since $|\F_q^*/(\F_q^*)^6|=6$, the six claimed subgraphs are $\pi_{\Hess}\big( \fungraph{\phi_{u^{-\frac{1}{6}}}\big(E_{-6912u}(\F_q)\big)}{\psi} \big)$ for $u \in \F_q^*/(\F_q^*)^6$.
    They can intersect only in $0, 1728, \infty \in \P^1(\F_q)$ by \Cref{lem:disjointTwistsProjs}.
    Moreover, by \Cref{prop:projectingSk}, the projection $\pi_{\Hess}$ respects the structure of the arborescences $\T_3^m$, except for the identifications occurring over $2$-torsion points (which project to $1728$ and $\infty$) and the identification of $\pm T$ (which project to $0$).
    The structure around the special points is given by \Cref{lem:multRoots}.
    In particular, since the depth of $\tau_{\0} \subseteq E_{-6912u}(\F_q)$ determines $m$ for each subgraph (\Cref{thm:structure}-\ref{per}), this implies that $m>1$ for exactly one subgraph, namely the unique one containing $6912 \in \P^1(\F_q)$.
\end{proof}

Explicit examples of the six components appearing in the Hessian graphs over $\F_q$ with $q \equiv 1 \bmod 3$ are given in \Cref{App:examples} (\Cref{fig:hessian31,fig:hessian172}).


\subsection{\texorpdfstring{$q \equiv 2 \bmod{3}$}{q=2 mod 3}} \label{sec:q2mod3}

In this section, we investigate Hessian graphs over finite fields $\F_q$ with $q \equiv 2 \bmod 3$.
In this case, we know the group structure of $E$.
\begin{lemma} \label{lem:grpstruct}
    Let $q \equiv 2 \bmod 3$ and $E_{(0)}$ be an elliptic curve over $\F_q$ of $j$-invariant $0$. Then
    \[ E_{(0)}(\F_q) \simeq \ZZ/(q+1)\ZZ. \]
\end{lemma}
\begin{proof}
    Let $p = \mathsf{char}(\F_q)$.
    We know that $E_{(0)}(\F_p)$ is supersingular \cite[Prop.\,4.33]{washington:ellipticCurves}, hence its trace over $\F_p$ is $0$.
    This implies that its Frobenius endomorphism $\pi \colon (x,y) \mapsto (x^p,y^p)$ over $\F_p$ satisfies $\pi^2=-p$.
    If we denote by $t_r$ the trace of $E_{(0)}$ over $\F_{p^r}$, then the characteristic equation of the Frobenius endomorphism $\pi^{r}$ of $E_{(0)}$ over $\F_{p^r}$ gives
\[ 0 = \pi^{2r}-t_r \pi^{r}+p^{r} = (-p)^{r}-t_r \pi^{r} + p^{r}.
\]
Since $q$ must be an odd power of $p$, then the above equation implies $t_r = 0$.
Thus, the trace of $E_{(0)}$ over $\F_q$ is $0$ and the group structure is given by~\cite[Lem.\,4.8]{schoof:nonsingplanecubicfinitefld}.
\end{proof}

\begin{remark} \label{rmk:Senough}
    As observed in \Cref{rmk:lambdainj}, for $q \equiv 2 \bmod 3$ the cubing map $M_3$ is invertible. Then \Cref{prop:commuting} gives
\[ \fungraph{\P^1(\F_q)}{\Hess} \simeq \fungraph{\P^1(\F_q)}{\Lambda}. \]
    In particular, we can understand Hessian graphs by only employing elements in $\S_{\Lambda} \subseteq E(\F_{q^2})$.
\end{remark}



\begin{proposition} \label{prop:structS}
    Let $q \equiv 2 \bmod 3$. Then
    \[ \psi|_{E(\F_q)} : E(\F_q) \to \phi_{\sqrt{-3}} \big( E_{256}(\F_q) \big), \]
    is a well-defined group isomorphism. Conversely, the map
    \[ \psi|_{\phi_{\sqrt{-3}} ( E_{256}(\F_q) )} : \phi_{\sqrt{-3}} \big( E_{256}(\F_q) \big) \to E(\F_q), \]
    is a well-defined $3$-to-$1$ group homomorphism.
\end{proposition}
\begin{proof}
    The above maps are both well-defined by \Cref{lem:Spartsclosed}.
    They are restrictions of a group homomorphism, hence they are also homomorphisms.
    Since $T \in E(\F_{q^2}) \setminus E(\F_{q})$, we have $\ker\psi|_{E(\F_q)} = \{\0\}$, hence $\psi|_{E(\F_q)}$ is injective.
    Surjectivity follows by size comparison: we notice that~$E_{256}$ is the proper quadratic twist of $E$, since $E = E_{256}^{(-27)}$.
    Therefore, we have $\tr(E) = -\tr(E_{256})$, and they are both $0$ by \Cref{lem:grpstruct}, which implies $|E(\F_q)| = q+1 = |E_{256}(\F_q)| = |\phi_{\sqrt{-3}} \big( E_{256}(\F_q) \big)|$.
    
    By \Cref{lem:psi2}, we have
    \[ \psi|_{\phi_{\sqrt{-3}} ( E_{256}(\F_q) )}  \circ \psi|_{E(\F_q)}= [-3]|_{E(\F_q)}, \]
    and $[-3]|_{E(\F_q)}$ is a $3$-to-$1$ group homomorphism.
    Since we proved that $\psi|_{E(\F_q)}$ is a group isomorphism, then $\psi|_{\phi_{\sqrt{-3}} ( E_{256}(\F_q) )}$ needs to be $3$-to-$1$.
\end{proof}


\begin{corollary} \label{cor:structS}
    Let $q \equiv 2 \bmod 3$. Then we have
    \[ \S_{\Lambda} = E(\F_q) \cup \psi\big( E(\F_q) \big), \]
    and
     \[ E(\F_q) \cap \psi\big( E(\F_q) \big) = \{ (12,0), \0 \}. \]
\end{corollary}
\begin{proof}

By \Cref{cor:Spi4Hessian}, we know that $\S_{\Lambda}$ is covered by $E(\F_q)$ and the image of its quadratic twist, which is $\psi( E(\F_q) )$ by \Cref{prop:structS}.
They intersect in \textcolor{black}{points with zero coordinates by \Cref{prop:newdisjpartsSk}, but $T \notin E(\F_q)$ since $\sqrt{-3} \notin \F_q$. Thus, they intersect only in} $2$-torsion points, which are only $\{ (12,0), \0 \}$ by \Cref{rmk:selfLoops}.
\end{proof}

\begin{theorem} \label{thm:structureq=2}
    Let $q \equiv 2 \bmod 3$ and let $N,d \in \NN$ such that $q+1 = 3^dN$, with ${\gcd}(3,N) = 1$.
    The Hessian graph $\fungraph{\P^1(\F_q)}{\Hess}$ enjoys the following properties.
    \begin{enumerate}
        \item\label{i} There are $N$ periodic elements. Two of them ($1728$ and $\infty$) constitute loops and have indegree~$2$, while the others lie on cycles of even length, where elements of indegree $1$ and $3$ alternate.
        \item\label{ibis} The length of every cycle divides the length of a maximal cycle, which is
        \[ \begin{cases}
            \ord_{N}(-3) & \textnormal
            {if $ -1 \in \{ (-3)^n \bmod N \}_{n \in \NN}$}, \\
            2\,\ord_{N}(-3) & \textnormal{otherwise}.
        \end{cases} \]
        \item\label{ii} There are isomorphic arborescences attached to every periodic element of indegree greater than $1$. 
        All the leaves have depth $2d$ relative to periodic elements $\Per(\Hess)$, while every non-leaf $P$ satisfies 
        \[|\Hess^{-1}(P)| = \begin{cases}
            3 &\textnormal{if $d_{\Per(\Hess)}(P)$ is even}, \\ 
            1 &\textnormal{if $d_{\Per(\Hess)}(P)$ is odd}.
        \end{cases}\]
    \end{enumerate}
\end{theorem}
\begin{proof} Let us define
\[ \S_1 = E(\F_q), \quad \S_2 = \phi_{\sqrt{-3}} \big( E_{256}(\F_q) \big), \quad \S = \S_1 \cup \S_2.\]
By \Cref{prop:structS}, we have $\S_2 = \psi\big (E(\F_q) \big)$.
As observed in \Cref{rmk:Senough}, the graph epimorphism
\[ \pi_{\Hess} : \fungraph{\S}{\psi} \to \fungraph{\P^1(\K)}{\Hess} \]
completely determines the considered Hessian graph.

\ref{i}: By \Cref{prop:projectingSk}, the periodic points of $\fungraph{\P^1(\F_q)}{\Hess}$ are precisely the projections of periodic points of $\fungraph{\S}{\psi}$.
By \Cref{prop:structS}, they arise from the periodic points of $\fungraph{\S_1}{[-3]}$ and those of $\fungraph{E_{256}(\F_q)}{[-3]}$.
Since these curves are both cyclic of order $3^dN$ (\Cref{lem:grpstruct}), 
then both $\S_1$ and $\S_2$ contain precisely $N$ periodic points.
As $M_3$ is invertible, the projection $\pi_{\Hess}$ is $2$-to-$1$ over every $j$ different from $1728 = \pi_{\Hess}(12,0)$ and $\infty = \pi_{\Hess}(\0)$.
Since $\{(12,0),\0\} = \S_1 \cap \S_2$ by \Cref{cor:structS}, then the projected periodic points are
\[ 2 + \frac{N-2}{2} + \frac{N-2}{2} = N. \]
Furthermore, $1728$ and $\infty$ are the unique loops in $\fungraph{\P^1(\F_q)}{\Hess}$ by \Cref{lem:loopj}, and they have indegree~$2$ by \Cref{lem:multRoots}.
Given \Cref{prop:structS}, every $j \in \P^1(\K) \setminus \{ 1728, \infty\}$ jumps (under $\Hess$) from $\pi_{\Hess}(\S_1)$ to $\pi_{\Hess}(\S_2)$, and vice-versa.
Hence, the periodic points that are not loops lie on cycles of even length.
Moreover, by \Cref{thm:indeg1}, the indegree of periodic elements in $\pi_{\Hess}( \S_1 )$ is $3$, while the indegree of elements of $\pi_{\Hess}( \S_2 )$ is $1$.

\ref{ibis}: By part \ref{i}, every cycle in $\fungraph{\P^1(\F_q)}{\Hess}$ contains an element 
\begin{equation}\label{eq:PEq}
    \pi_{\Hess} (P), \quad \textnormal{with} \quad P \in \S_1.
\end{equation}
By \Cref{lem:grpstruct}, every maximal cycle in $\fungraph{\P^1(\F_q)}{\Hess}$ contains elements $\pi_{\Hess}(P)$ as in \cref{eq:PEq} of order~$N$.
Since $\psi^{(2)}(P) = [-3]P$ by \Cref{lem:psi2}, then such a $P$ has order $2 \,\ord_{N}(-3)$ in $\fungraph{\S}{\psi}$.
This order is halved after $\pi_{\Hess}$ if and only if $P$ and $-P$ belong to the same cycle in $\fungraph{\S}{\psi}$, namely if and only if $-1 \in \{ (-3)^n \bmod N \}_{n \in \NN}$.
Furthermore, since $P$ generates the subgroup of order $N$ in $\S_1$, every periodic element of type \eqref{eq:PEq} is equal to $[m]P$ for some $m \in \NN$.
Since $\psi$ commutes with the scalar multiplication $[m] \in \End(E)$, the cycle containing $[m]P$ arises from the multiplication of the cycle containing $P$ by the scalar $m$.
This operation folds the maximal cycle ${\gcd}(m,N)$ times, hence the length of a maximal cycle is divisible by the length of every other cycle.

\ref{ii}: By \Cref{prop:structS}, $\pi_{\Hess}$ induces a functional graph embedding
\[ \fungraph{ \S_1 }{[-3]} \hookrightarrow \fungraph{\P^1(\F_q)}{\Hess^{(2)}}. \]
This embedding completely determines 
$\fungraph{\P^1(\F_q)}{\Hess}$ because every element in
$\pi_{\Hess}(\S_2) \setminus \pi_{\Hess}(\S_1)$ has indegree~$1$ by \Cref{thm:indeg1}.
In particular, $\fungraph{\P^1(\F_q)}{\Hess}$ can be obtained from $\fungraph{ \S_1 }{[-3]}$ by replacing every edge $P \to [-3]P$ such that $P \neq -P$ with two edges
\[ \pi_{\Hess}(P) \xrightarrow[]{\Hess} \pi_{\Hess}\circ\psi(P) \xrightarrow[]{\Hess} \pi_{\Hess}([-3]P). \]
Since all non-periodic elements of $\fungraph{ \S_1 }{[-3]}$ are organized in isomorphic arborescences by \Cref{thm:structure}, the same holds for non-periodic elements of $\fungraph{\P^1(\F_q)}{\Hess}$ by \Cref{prop:projectingSk}.
Moreover, the depth of leaves in $\fungraph{ \S_1 }{[-3]}$ is the maximal $e \in \NN$ such that $3^e$ divides the exponent of $E(\F_q) \simeq \ZZ/(3^dN)\ZZ$, i.e., it equals $d$.
Therefore the \textcolor{black}{depth} of all leaves in $\fungraph{\P^1(\F_q)}{\Hess}$ is $2d$.
Finally, the root of every non-trivial arborescence of $\fungraph{\P^1(\F_q)}{\Hess}$ is a periodic element of indegree greater than $1$, hence it belongs to $\pi_{\Hess} (\S_1)$ by \Cref{thm:indeg1}. This implies that elements of even depth have indegree $3$, except for leaves, whose indegree is $0$, and $0, 1728 \in \P^1(\F_q)$, which have indegree $2$. Conversely, all the elements of odd depth belong to $\pi_{\Hess} (\S_2) \setminus \{0,1728\}$, therefore they have indegree $1$ by \Cref{thm:indeg1}.
\end{proof}

\begin{remark}
    Differently from the case $q \equiv 1 \bmod 3$, the arborescences prescribed by \Cref{thm:structureq=2} are not isomorphic to $\T_3^m$ (e.g.\ see \Cref{fig:hessian31psi,fig:hessian89} in \Cref{App:examples}).
    However, their structure is completely determined by the conditions \ref{i} and \ref{ii} of \Cref{thm:structureq=2}.
\end{remark}


\section{A glimpse into cryptography} \label{sec:crypto}

The regular structure of Hessian graphs over finite fields, as described in \Cref{sec:HessianFF}, might remind the reader of \emph{(oriented) $\ell$-isogeny graphs}, which are widely studied in the context of post-quantum cryptography. We refer to~\cite{kohel:endomRings,sutherland:isogVolc} 
for the study of their structure and to~\cite{castryckEtAl:CSIDH,scallop} 
for their use in cryptography.
Finding a cryptographic application for Hessian graphs and understanding their relation with isogeny graphs -- if any -- is beyond the scope of this paper. However, in this section, we highlight some consequences of our previous results that might motivate future research in this direction.


\subsection{Efficient computation of the iterated Hessian}

Isogeny-based protocols are typically performed over finite fields of cryptographic size, i.e., $\F_p$ or $\F_{p^2}$ where $p$ is a prime of at least $256$ bits.
While evaluating many iterations of a rational map over such fields is computationally hard in general, this task has a polynomial complexity for the Hessian transformation because of its Lattès structure.
\begin{proposition}
\label{prop:iterHess}
    Let $N\in \NN$ and $j \in \F_q$. Then $\Hess^{(N)}(j)$ can be computed in polynomial time in the sizes of $q$ and $N$. 
\end{proposition}
\begin{proof}
    By \Cref{prop:commuting} and \Cref{thm:projectphi3}, one can compute $\Hess^{(N)}(j)$ by considering a preimage
    \[ P = \pi_{\Hess}^{-1}(j) \in \S_{\Hess} \subseteq E(\F_{q^6}), \]
   computing $\psi^{(N)}(P)$, and projecting back to $\P^1(\F_q)$ via $\pi_{\Hess}$.
    All these operations can be performed in polynomial time.
    Specifically, $\pi_\Hess^{-1}(j)$ only requires finding a cubic root in $\F_{q^3}$ and a square root in $\F_{q^6}$, which can be done in polynomial time~\cite{adlemanEtAl:roots77}.
    Moreover, up to computing an additional iteration of~$\Hess$, we can assume $N$ to be even.
    By \Cref{lem:psi2}, we have $\psi^{(N)}(P)=\left[-3^{N/2}\bmod |E(\F_{q^6})|\right]P$, which can be computed in polynomial time, both in $N$ and $q$.
    In fact, $|E(\F_{q^6})| \sim q^6$ can be found via Schoof's algorithm~\cite{schoof:countingPtsEC}, \textcolor{black}{which can then be used to reduce $-3^{N/2}$ to an integer of size $\sim q^6$}.
    Such a scalar multiplication of $P$ can be performed via double-and-add algorithms (see e.g.\ \cite[§XI.1]{silv:arithEll}).
\end{proof}

\begin{remark}
    \Cref{cor:Spi4Hessian} and \Cref{lem:comDiag} can be leveraged to avoid computations on extensions of $\F_q$, by replacing $P$ with the corresponding $\F_q$-rational point on a suitable twist of $E$. 
    This efficient Hessian computation is detailed in Appendix~\ref{App:iterHess}, \Cref{alg:iterHess}.
\end{remark}

\textcolor{black}{\begin{remark}
    The same ideas in \Cref{prop:iterHess} can be applied to efficiently compute the $N$-th iteration of any Lattès map $\phi$ descending from $\psi \in \End(E)$ on a given $j \in \F_q$, as $\phi^{(N)}(j) = \pi \circ \psi^{(N)} \circ \pi^{-1}(j)$.
    This method requires:
    \begin{itemize}
        \item computing $P=\pi^{-1}(j)$, i.e., finding a root of a degree-$\deg(\pi)$ univariate polynomial in $\F_q$,
        \item finding the minimal polynomial $f(x) \in \ZZ[x]$ of $\psi$, which boils down to computing its trace (see, e.g., \cite[Thm.\,81]{kohel:endomRings}),
        \item finding $\lambda,\mu \in \ZZ$ such that $\lambda x + \mu \equiv x^N \bmod f(x)$,
        \item evaluating $(\lambda \psi + \mu)(P)$ (see, e.g., \cite[Thm.\,3]{robert:efficientRep}) and projecting it via $\pi$.
    \end{itemize}
    In the Hessian case covered by \Cref{prop:iterHess}, $f(x) = x^2+3$ and $x^N \equiv [-3]^{\frac{N}{2}}$ or $x^N \equiv [-3]^{\frac{N-1}{2}} x$, depending on $N$ being even or odd, respectively.
    Similar methods may be applied to other finite quotients of affine maps (Chebyshev and power maps \cites{milnor2006lattes}).
\end{remark}}

\subsection{Supersingular points in the Hessian graph} \label{subsec:supersing}

An open problem in isogeny-based cryptography is to produce a (random) supersingular $j$-invariant in such a way that its endomorphism ring remains unknown~\cite{booherEtAl:failToHash,SRS}. This motivates the question of whether supersingular $j$-invariants \textcolor{black}{exhibit a structured distribution in the Hessian graphs}.
As we will now see, there is in fact some non-trivial necessary condition for a connected component of the Hessian graph to be \emph{supersingular}, i.e., to contain at least one supersingular $j$-invariant.
\begin{lemma}
\label{lem:supersingCubSquare}
    Let $j$ be a supersingular $j$-invariant. Then $j \in (\F_{p^2})^3$ and $j-1728 \in (\F_{p^2})^2$.
\end{lemma}
\begin{proof}
     The fact that $j$ is a cube in $\F_{p^2}$ follows from CM-theory~\cite[Thm.\,2.4]{Morton2011}.
     By~\cite[§3]{adjEtAl:isogenyGraphsSupersing}, there is an elliptic curve $E_{(j)}: y^2 = x^3+Ax + B$ defined over $\F_{p^2}$ with that $j$-invariant and such that $\tr(E_{(j)}) = \pm 2p$.
     By~\cite[Lem.\,4.8]{schoof:nonsingplanecubicfinitefld}, its affine $2$-torsion points $\{(x_i,0)\}_{i \in \{1,2,3\}}$ are defined over $\F_{p^2}$.
     By~\cite[206]{cox:primesOfFormX2+nY2}, the discriminant $\Delta$ of $E_{(j)}$ is
     \[\Delta=16\prod_{1 \leq i<k \leq 3}(x_i-x_k)^2,\]
    which is a square in $\F_{p^2}$.
    A straightforward computation gives
    $j-1728=\frac{2^{10}3^6 B^2}{\Delta} \in (\F_{p^2})^2$.
\end{proof}
\begin{proposition} \label{prop:locatess}
    The supersingular vertices of $\fungraph{\P^1(\overline{\F_p})}{\Hess}$ all belong to the subgraph $\pi_{\Hess}(\fungraph{E(\F_{p^2})}{\psi})$.
\end{proposition}
\begin{proof}
By \Cref{lem:supersingCubSquare}, $j$ has a cubic root $x\in\F_{p^2}$ and $j-1728$ has a square root $y\in\F_{p^2}$.
Therefore $(x,y) \in E(\F_{p^2})$ and $j=\pi_{\Hess}([x\colon y \colon 1])$.
Since $\sqrt{-3} \in \F_{p^2}$, then $\psi$ is closed on $E(\F_{p^2})$, hence it projects to a subgraph of $\fungraph{\P^1(\overline{\F_p})}{\Hess}$.
\end{proof}

\begin{corollary}
    Every supersingular vertex $j \in \F_{p^2} \setminus \{0, 1728\}$ has indegree $3$ in $\fungraph{\P^1({\F_{p^2}})}{\Hess}$.
\end{corollary}
\begin{proof}
    By \Cref{thm:indeg1} and \Cref{prop:locatess}, such a $j$ cannot have indegree $1$.
    Thus, by \Cref{lem:multRoots}, it is either $0$ or $3$.
    Let $E_{(j)}$ be any elliptic curve with $j$-invariant $j$.
    Since $\tr(E_{(j)})$ is even \cite[§3]{adjEtAl:isogenyGraphsSupersing}, then $E_{(j)}$ has an order-$2$ point. Therefore, by \Cref{prop:j2tors}, the indegree of $j$ must be $3$.
\end{proof}

Finally, since the Hessian preserves the $3$-torsion of elliptic curves, all the curves in a supersingular component share the same trace modulo $3$.

\begin{lemma}
\label{prop:traceMod3}
    Let $E$ be an elliptic curve over $\F_q$ with $j(E) \neq 0$. Then
    \[ \tr(E) \equiv \tr\big(\Hess(E)\big) \bmod 3. \]
\end{lemma}
\begin{proof}
    By Remark~\ref{rem:flexPoints}, $E$ and $\Hess(E)$ have the same $3$-torsion points.
    Therefore, their Frobenius endomorphisms coincide over $E[3]$, meaning that they are represented (as automorphisms of $E[3]$) by the same matrix over $\mathrm{GL}_2(\ZZ/3\ZZ)$.
    In particular, the corresponding characteristic polynomials are equal.
\end{proof}
\begin{proposition} \label{prop:supersingTrace}
    Let $j \in \F_p^*$ (resp.\ $j \in \F_{p^2}^*$) with $j \neq 1728$ be a vertex in a supersingular component of $\fungraph{\P^1(\F_p)}{\Hess}$ (resp.\ $\fungraph{\P^1(\F_{p^2})}{\Hess}$), and let $E_{(j)}$ be a curve defined over $\F_p$ (resp.\ $\F_{p^2}$) with that $j$-invariant.
    Then $\tr(E_{(j)}) \equiv 0 \bmod 3$ (resp.\ $\tr(E_{(j)}) \not\equiv 0 \bmod 3$).
\end{proposition}
The above proposition follows by \Cref{prop:traceMod3}, but its complete proof requires a finer analysis of the local behavior of the Hessian action around $0$ and $1728$.
We defer these results and the proof of \Cref{prop:supersingTrace} to Appendix~\ref{App:twists}.

\section{Conclusion}
\label{sec:conclusion}
\textcolor{black}{In this paper, we provided a novel interpretation of the Hessian transformation for plane projective cubics as a rigid Lattès map fitting into a reduced diagram.
This result allows for a description of the global symmetries of Hessian graphs, which had eluded all previous local analysis.}

\textcolor{black}{The tools presented in \Cref{Sec:Prel,sec:HessianIsLattes,sec:ConnComp,sec:Sk,sec:Functionalpsik} are proven over any field $\K$, provided that $\mathsf{char}(\K) \neq 2,3$, while the results of \Cref{sec:HessianFF,sec:crypto} are finer but require $\K$ to be finite.}
\textcolor{black}{It is conceivable to expect Hessian graphs over infinite fields to be wilder, as they may involve connected components of types \ref{line} or \ref{sline} from \Cref{thm:structure}.
Nonetheless, the reduced Lattès structure still poses strong constraints on such infinite dynamical systems, which would deserve further investigation.}

\textcolor{black}{The tools developed in \Cref{sec:ConnComp,sec:Sk} are quite general and may be readily applied to study reduced Lattès diagrams coming from endomorphisms $\psi$ of prime degree, e.g., arising from elliptic curves with CM by $\sqrt{-\ell}$, for some prime $\ell$.
We do not foresee major obstructions as long as $\mathsf{char}(\K)$ is coprime to~$\ell$.
To widen the applicability of the proposed methods, we envision two possible research directions: either factoring large degree projections $\pi$ through reduced diagrams, or extending the classification of the dynamics of group endomorphisms to those with composite kernel.}

The arithmetic properties of $E_k$ could also provide \textcolor{black}{heuristics about special families of curves}.
For instance, inspecting the distribution of supersingular curves on Hessian graphs (as in \Cref{subsec:supersing}) might be relevant for cryptographic applications.

Finally, since the Hesse pencil can be viewed as a model of the level-$3$ modular curve $X(3)$ (see e.g.\ \cite[§2]{artebaniDolgachev:hassePencil}), one can aim at rephrasing our approach in the language of modular curves, and consider the Hessian transformation on higher-level \textcolor{black}{genus-$1$ moduli}, such as $X(6)$.

\section*{Acknowledgements}

We want to express our gratitude to Luca De Feo, who pointed us to functional graphs of group endomorphisms, to Wouter Castryck for his valuable comments on a preliminary version of this article, to Marc Houben for introducing us to the framework of Lattès maps, to Andrea Ferraguti for the proof of \Cref{Sec3:prop-Lattes-C}, and to Lea Terracini for helpful suggestions regarding dynamical systems over $\mathbb{Q}^{\mathrm{ab}}$.
We also wish to thank the anonymous reviewer for their valuable comments, which helped broaden the scope of the paper to reduced Lattès diagrams.
Finally, we thank Massimiliano Sala for seminal discussions on the topic,  
Edoardo Ballico for his insights on Hessian varieties, Giulio Codogni, Guido Lido, Sachi Hashimoto, and Samuel Le Fourn for fruitful discussions on modular curves.

\section*{Funding}

DT has been supported by the Research Foundation - Flanders (FWO: 12ZZC23N) and by the BOF project C16/21/002 by the Internal Funds KU Leuven.
FP acknowledges support from Ripple’s University Blockchain Research Initiative.
All authors thank the INdAM group GNSAGA for support, and CIRM-FBK (Trento) for the hospitality through the \emph{Research in Pairs} program.

\printbibliography

\appendix
\section{Isomorphism classes in the Hesse pencil}
\label{App:IsoClasses}

We define the \emph{Hessian group} $G_{216}$ as the subgroup of $\Aut\big(\P^2(\kbar)\big)$ fixing the set $\{p_0, \dots, p_8\}$.
As shown in~\cite[§4]{artebaniDolgachev:hassePencil}, it is generated by
\begin{align*}
    g_1\colon [x:y:z] &\mapsto [y : z : x],\\  
    g_2\colon [x:y:z] &\mapsto [x : \z y : \z^2 z],\\
    g_3\colon [x:y:z] &\mapsto [x + y + z : x + \z y + \z^2 z : x + \z^2 y + \z z],\\ 
    g_4\colon [x:y:z] &\mapsto [ x: \z y: \z z].
\end{align*}
One can check that the action of $G_{216}$ respects the Hesse pencil, namely for every $g \in G_{216}$ there is $\hat{g} \in \Aut\big(\P^1(\kbar)\big)$ such that
\[ F_{\lambda} \big( g([x:y:z]) \big) = F_{\hat{g}(\lambda)} ( [x:y:z] ). \]
In particular, one can verify that
\begin{align*}
\hat{g}_1&=\hat{g}_2 = \mathrm{Id},& \hat{g}_3\colon &[\lambda:1] \mapsto \left[ 3(6-\lambda):\lambda + 3\right], & \hat{g}_4\colon [\lambda:1] &\mapsto [\z^2\lambda:1]. \\
& & &[1:0] \mapsto [-3:1]  & [1:0] &\mapsto [1:0]
\end{align*}
The relations $\hat{g}_3^2=\hat{g}_4^3=(\hat{g}_4 \circ \hat{g}_3)^3=\mathrm{Id}$ show that $\widehat{G_{216}}$ is isomorphic to the alternating group $A_4$ (see also~\cite[§2]{cataneseSernesi2024:hesseModSpace}).
We are now ready to prove \Cref{Sect2:Prop-IsoClasses}.

\begin{proof}[Proof of \Cref{Sect2:Prop-IsoClasses}]
One can directly check by \cref{eqn:jInvPencil} that $\mathcal{J}^{-1}(\infty) = \{ -3, -3\z, -3\z^2, \infty \}$, and
\begin{align*}
    j(\lambda) &=-\frac{ \lambda^3  (\lambda - 6)^3 (\lambda - 6 \z)^3  (\lambda -6\z^2)^3}{(\lambda + 3)^3 (\lambda + 3\z)^3 (\lambda +3\z^2)^3},\\
    j(\lambda)-1728 &=- \prod_{i=0}^2 \frac{ \big(\lambda +3\z^i(1+\sqrt{3}) \big)^{2} \big(\lambda +3\z^i(1-\sqrt{3}) \big)^{2} }{(\lambda + 3\z^i)^3}.
\end{align*}
Therefore, we have $\mathcal{J}^{-1}(0) = \{0, 6,6\z,6\z^2\}$, while $\mathcal{J}^{-1}(1728)
$ is non-empty if and only if $\sqrt{3} \in \K$. In this case, $j(\lambda) = 1728$ admits precisely two solutions if $\z \not\in \K$, or six distinct solutions otherwise.

Since the $j$-invariant characterizes the curves up to $\Aut\big(\P^1(\kbar)\big)$, then the fibers of $\mathcal{J}$ are closed under the action of $\widehat{G_{216}}$, whose fixed points (over $\kbar$) are the solutions of the following equations:
\begin{align*}
\mathrm{Id}(\lambda)-\lambda &=0,& \hat{g}_3(\lambda)-\lambda &=- \frac{ \lambda^{2} + 6 \lambda - 18}{\lambda + 3},\\
    \hat{g}_4 \circ \hat{g}_3(\lambda)-\lambda &=- \frac{ (\lambda +3\z^2)  (\lambda-6\z^2)}{\lambda + 3},&
     \hat{g}_4^2 \circ \hat{g}_3(\lambda)-\lambda &=- \frac{ (\lambda + 3 \z)  (\lambda - 6 \z)}{\lambda + 3},\\
\hat{g}_3 \circ \hat{g}_4 \circ \hat{g}_3(\lambda)-\lambda &=- \frac{ (\lambda - 6)  (\lambda + 3)}{\lambda + 3 \z},&
    \hat{g}_4(\lambda)-\lambda &=\left(\z - 1\right)  \lambda,\\
\hat{g}_4^2(\lambda)-\lambda &=-\left(\z + 2\right) \lambda,&   \hat{g}_3 \circ \hat{g}_4(\lambda)-\lambda &=- \frac{ (\lambda + 3 \z)  (\lambda - 6 \z)}{\lambda +3\z^2},\\
    \hat{g}_3 \circ \hat{g}_4^2(\lambda)-\lambda &=- \frac{ (\lambda +3\z^2)  (\lambda-6\z^2)}{\lambda + 3 \z},&  
\hat{g}_4 \circ \hat{g}_3 \circ \hat{g}_4(\lambda)-\lambda &=- \frac{ (\lambda - 6)  (\lambda + 3)}{\lambda +3\z^2},\\
\hat{g}_4^2 \circ \hat{g}_3 \circ \hat{g}_4(\lambda)-\lambda &=- \frac{ (\lambda^{2} +6\z^2 \lambda - 18 \z)}{\lambda +3\z^2},&
    \hat{g}_4 \circ \hat{g}_3 \circ \hat{g}_4^2(\lambda)-\lambda &=- \frac{ \lambda^{2} + 6 \z \lambda -18\z^2}{\lambda + 3 \z}.
\end{align*}
These solutions all belong to $\mathcal{J}^{-1}(\{0,1728, \infty\})$.
Hence, for $j \in \K^*\setminus\{1728\}$, the action of $\widehat{G_{216}}$ on $|\mathcal{J}^{-1}(j)|$ is free.
In particular, since by \cref{eqn:jInvPencil} we have
\[ |\mathcal{J}^{-1}(j)| \leq 12 = |\widehat{G_{216}}|, \]
we conclude that this action is also transitive, hence regular.
Therefore, if $\z \in \K$ and $\lambda \in \mathcal{J}^{-1}(j)$, then
\[ \mathcal{J}^{-1}(j) = \{\hat{g}(\lambda)\}_{g \in G_{216}}. \]
Otherwise, if $\z \not\in \K$, one can easily check that
\[ \mathcal{J}^{-1}(j) = \{ \lambda, \hat{g_3}(\lambda) \}, \]
proving part \ref{propjlambdaii}.
%
%
\end{proof}

\begin{remark}
    We note that the above proof also shows that $\widehat{G_{216}}$ acts transitively on the fibers of $\mathcal{J}$.
    In fact, we observed that the action is even free on $\mathcal{J}^{-1}(j)$ for every $j \in \K^* \setminus \{1728\}$, while there is a unique orbit also for $j \in \{0,1728,\infty\}$, since
    \[ \hat{g}_3(-3) = \infty, \quad \hat{g}_3(0) = 6, \quad \textnormal{and} \quad \hat{g}_4^{-1} \circ \hat{g}_3 \circ \hat{g}_4 (-3-\sqrt{3}) = -3+\sqrt{3}. \]
\end{remark}

\section{Hessian of twists}
\label{App:twists}
    
Passing from curves to $\kbar$-isomorphisms classes of curves does lose some information.
A finer version of the Hessian graph can be obtained considering the Hessian of $\K$-isomorphism classes -- rather than $\kbar$-isomorphism classes -- of elliptic curves.
We will call this graph the  \emph{Hessian graph with twists}.

\begin{proposition} 
    \label{prop:hessianIsom}
Let $E\colon y^2 = x^3 + Ax + B$ be an elliptic curve over $\K$ with $j(E) \neq 0$.
Let $D \in \K^*$ and define $E^{(D)}$ as in \Cref{lemma:twists}.
We denote by $E'$ the elliptic curve $\K$-isomorphic to $\Hess(E)$ and defined by \cref{eqn:hessianSWForm}, and define ${E'}^{(D)}$ accordingly. Then $\Hess(E^{(D)})$ is $\K$-isomorphic to
\begin{align}
\label{eqn:hessianTwists}
    \begin{cases}
       {E'}^{(D^{-1})} &\quad \text{if $j(E')\notin\{0,1728\}$ or $j(E')=1728=j(E)$},\\
       {E'}^{(D^{-2})}&\quad \text{if $j(E')= 1728\neq j(E)$},\\
        {E'}^{(D^{-3})}&\quad \text{if $j(E')=0$}.
    \end{cases}
\end{align}
In particular, $\Hess$ descends to a map $\Twist(E) \rightarrow \Twist(E')$, which corresponds to
\[ \begin{cases} \K^*/(\K^*)^2 \to \K^*/(\K^*)^2, \quad D \mapsto D^{-1}, &\textnormal{if } j(E') \not\in\{0,1728\}, \\
\K^*/(\K^*)^4 \to \K^*/(\K^*)^4, \quad D \mapsto D^{-1}, &\textnormal{if } j(E') = 1728 = j(E), \\
\K^*/(\K^*)^2 \to \K^*/(\K^*)^4, \quad D \mapsto D^{-2}, &\textnormal{if } j(E') = 1728 \neq j(E),\\
\K^*/(\K^*)^2 \to \K^*/(\K^*)^6, \quad D \mapsto D^{-3}, &\textnormal{if } j(E') = 0.
\end{cases} \]
\end{proposition}

\begin{proof}
Let $E' \colon y^2=x^3+A'x+B'$.
If $j(E)\notin \{0,1728\}$, then from \cref{eqn:hessianSWForm} we get
 \begin{equation*}
        \Hess(E^{(D)}) \colon y^2=x^3 + \frac{1}{D^2}A' x + \frac{1}{D^3}B'=\begin{cases}
           E'^{(D^{-1})} &\quad \text{if $j(E')\notin \{0,1728\}$},\\
           E'^{(D^{-2})} &\quad \text{if $j(E')=1728$},\\
           E'^{(D^{-3})} &\quad \text{if $j(E')=0$}.
        \end{cases}
\end{equation*}
Similarly, if $j(E)=1728$, \cref{eqn:hessianSWForm} yields
\begin{equation*}
    \Hess(E^{(D)}) \colon y^2=x^3 +\frac{1}{D}A'x = {E'}^{(D^{-1})}.
\end{equation*}
In this case, by \Cref{lem:multRoots} we have $j(E')=1728$, hence \cref{eqn:hessianTwists} covers every possible case.

Finally, the last statement follows by considering the Hessian transformation $\Twist(E) \to \Twist(E')$ under the canonical isomorphism given in \cite[Prop.\,X.5.4]{silv:arithEll}.
\end{proof}
Since the inversion is an isomorphism of $\K^*$, then $\Hess$ induces almost always an isomorphism between $\Twist(E)$ and $\Twist\big(\Hess(E)\big)$ by \Cref{prop:hessianIsom}.
As a consequence, when $j\big(\Hess(E)\big) \notin \{0,1728\}$, the proper twist of $\Hess(E)$ arises as the Hessian of the proper twist of $E$.
This suggests that the Hessian graph with twists is essentially a \lq doubled\rq\ Hessian graph. However, the cases in which extra proper twists might appear need some special care.

\begin{proposition}
\label{prop:specialTwists}
The Hessian graph with twists over $\F_q$, around curves of $j$-invariant $0$ and $1728$, is locally described by Tables~\ref{tab:0} and~\ref{tab:1728}.
\begin{table}[ht]
    \centering
    \begin{tabular}
    {@{}Sc|Sc@{}}
        \hline
        $q \equiv 1 \bmod 3$ & $q \equiv 2 \bmod 3$\\ \hline
       \begin{tikzpicture}
    \node[circle,draw,fill=gray!100,scale=0.5] (B) at (-0.25,0.17) {};
    \node[circle,draw,fill=gray!40,scale=0.5] (A) at (0.25,0.17) {};
    \node[circle,scale=0.5] (C) at (0,-0.5) {$\infty$};
     \node[circle,draw,scale=0.5] (D) at (-0.75,-0.4) {};
      \node[circle,draw,scale=0.5] (E) at (0.75,-0.4) {};
       \node[circle,draw,scale=0.5] (F) at  (-0.58,-0.05) {};
        \node[circle,draw,scale=0.5] (G) at (0.58,-0.05) {};

    \draw[-latex] (A) -- (C);
    \draw[-latex] (B) -- (C);

    \draw[-latex] (-0.46,0.75) -- (B);
    \draw[-latex] (0.46,0.75) -- (A);

    \draw[-latex] (D) -- (C);
    \draw[-latex] (F) -- (C);
    \draw[-latex] (G) -- (C);
    \draw[-latex] (E) -- (C);
    \draw[-latex] (C) to[out=300,in=250,loop] node[] {} ();
 \end{tikzpicture}
           
& 
        \begin{tikzpicture}
    \node[circle,draw,fill=gray!100,scale=0.5] (B) at (-0.25,0.17) {};
    \node[circle,draw,fill=gray!40,scale=0.5] (A) at (0.25,0.17) {};
    \node[circle,scale=0.5] (C) at (0,-0.5) {$\infty$};
      \phantom{\node (D) at (0,0.8) {$\infty$};}

    \draw[-latex] (A) -- (C);
    \draw[-latex] (B) -- (C);

    \draw[-latex] (-0.46,0.75) -- (B);
    \draw[-latex] (0.46,0.75) -- (A);

    \draw[-latex] (C) to[out=300,in=250,loop] node[] {} ();
 \end{tikzpicture} \\
        \hline
    \end{tabular}
    \caption{Vertices of $j$-invariant $0$ in the Hessian graph with twists over $\F_q$.
    The darker vertex represents the curve of equation $y^2=x^3+1$, while the gray vertex identifies its proper quadratic twist.}
    \label{tab:0}
\end{table}
\begin{table}[ht]
    \centering
    \begin{tabular}{@{}Sr|Sc|Sc|Sc@{}}
        \hline
        & $-3 \in (\F_q^*)^4$ &  $-3 \in (\F_q^*)^2\setminus  (\F_q^*)^4$ & $-3 \notin (\F_q^*)^2$ \\
        \hline
        \begin{tikzpicture}
            \phantom{\node[]  at (0,1) {$q \equiv 3 \bmod 4$};}
            \node[]  at (0,0.5) {$-1 \notin (\F_q^*)^2$};
            \phantom{\node[draw] at (0,0){};}
        \end{tikzpicture} & 
        \begin{tikzpicture}
            \node[circle,draw,fill=gray!40,scale=0.5] (A) at (1,0) {};
            \node[circle,draw,fill=gray!100,scale=0.5] (B) at (0,0) {};
            \draw[-latex] (A) to[out=310,in=240,loop] node[] {} ();
            \draw[-latex] (B) to[out=310,in=240,loop] node[] {} ();
            \draw[-latex] (B) ++(0.25,0.5) -- (B);
            \draw[-latex] (B) ++(-0.25,0.5) -- (B);
        \end{tikzpicture} & 
         \begin{tikzpicture}
            \node[]  at (0,0.38) {$(\F_q^*)^2 = (\F_q^*)^4$};
            \phantom{\node[]  at (0,0) {(never occurs)};}
        \end{tikzpicture}
        &
        \begin{tikzpicture}
            \node[circle,draw,fill=gray!100,scale=0.5] (A) at (0,0) {};
            \node[circle,draw,fill=gray!40,scale=0.5] (B) at (1,0) {};
            \draw[-latex] (A) to[out=45,in=135] (B);
            \draw[-latex] (B) to[out=-135,in=-45] (A);
            \draw[-latex] (A) ++(0.25,0.5) -- (A);
            \draw[-latex] (A) ++(-0.25,0.5) -- (A);
            \phantom{ \draw[-latex] (A) to[out=310,in=240,loop] node[] {} ();}
        \end{tikzpicture}\\
        \hline
        \begin{tikzpicture}
            \phantom{\node[] at (0,1) {$q \equiv 1 \bmod 4$};}
            \node[]  at (0,0.42) {$-1 \in (\F_q^*)^2$};
            \phantom{\node[] at (0,0) {R2};}
        \end{tikzpicture} & 
        \begin{tikzpicture}
            \node[circle,draw,fill=gray!100,scale=0.5] (A) at (0,0) {};
            \node[circle,draw,fill=gray!40,scale=0.5] (B) at (1,0) {};
            \draw[-latex] (A) to[out=310,in=240,loop] node[] {} ();
            \draw[-latex] (B) to[out=310,in=240,loop] node[] {} ();
            \draw[-latex] (A) ++(0,0.53) -- (A);
            \draw[-latex] (B) ++(0,0.53) -- (B);
            \node[circle,draw,scale=0.5] (C) at (2,0) {};
            \node[circle,draw,scale=0.5] (D) at (3,0) {};
            \draw[-latex] (C) to[out=45,in=135] (D);
            \draw[-latex] (D) to[out=-135,in=-45] (C);
        \end{tikzpicture} & 
          \begin{tikzpicture}
            \node[circle,draw,scale=0.5] (A) at (0,0) {};
            \node[circle,draw,scale=0.5] (B) at (1,0) {};
            \draw[-latex] (A) to[out=310,in=240,loop] node[] {} ();
            \draw[-latex] (B) to[out=310,in=240,loop] node[] {} ();
            \draw[-latex] (C) ++(0,0.53) -- (C);
            \draw[-latex] (D) ++(0,0.53) -- (D);
            \node[circle,draw,fill=gray!100,scale=0.5] (C) at (2,0) {};
            \node[circle,draw,fill=gray!40,scale=0.5] (D) at (3,0) {};
            \draw[-latex] (C) to[out=45,in=135] (D);
            \draw[-latex] (D) to[out=-135,in=-45] (C);
        \end{tikzpicture} &
         \begin{tikzpicture}
            \node[circle,draw,fill=gray!100,scale=0.5] (A) at (0,0) {};
            \node[circle,draw,scale=0.5] (B) at (1,0) {};
            \draw[-latex] (A) to[out=45,in=135] (B);
            \draw[-latex] (B) to[out=-135,in=-45] (A);
            \draw[-latex] (A) ++(0,0.53) -- (A);
            \node[circle,draw,fill=gray!40,scale=0.5] (C) at (2,0) {};
            \node[circle,draw,scale=0.5] (D) at (3,0) {};
            \draw[-latex] (C) to[out=45,in=135] (D);
            \draw[-latex] (D) to[out=-135,in=-45] (C);
            \draw[-latex] (C) ++(0,0.53) -- (C);
             \phantom{\draw[->] (A) to[out=310,in=240,loop] node[] {} ();}
        \end{tikzpicture}\\
        \hline
    \end{tabular}
    \caption{Vertices of $j$-invariant $1728$ in the Hessian graph with twists over $\F_q$.
    The darker vertex represents the curve of equation $y^2=x^3-x$, while the gray vertex identifies its proper quadratic twist.}
    \label{tab:1728}
\end{table}

\end{proposition}
\begin{proof}
    We first consider curves with $j$-invariant $0$, i.e., those defined for $B \in \F_q^*$ by 
    \[ E_{0}(B) : y^2 = x^3+B. \]

    We directly verify with \cref{eqn:hessianSWForm} that $\Hess^{-1}\big(E_{0}(1)\big)$ always contains an elliptic curve $\F_q$-isomorphic to that defined by $y^2 = x^3-\frac{1}{9}x + \frac{1}{81}$.
    Moreover, the map
    \[ \F_q^*/(\F_q^*)^2 \to \F_q^*/(\F_q^*)^6, \qquad D \mapsto D^{-3} \]
    is always injective, hence the Hessian of the proper quadratic twist of $y^2 = x^3-\frac{1}{9}x + \frac{1}{81}$ is $\F_q$-isomorphic to the proper quadratic twist of $E_{0}(1)$.
    When $q \equiv 2 \bmod 3$, we have $\F_q^* = (\F_q^*)^3$ and $-3 \not\in (\F_q^*)^2$, hence the $\F_q$-isomorphism classes of curves with $j$-invariant $0$ are only $\{E_{0}(1),E_{0}(-3)\}$, while for $q \equiv 1 \bmod 3$ there are six $\F_q$-isomorphism classes of curves with this $j$-invariant.
    Finally, the Hessian of any such curves consists of three independent lines by \Cref{prop:HessofE}-\ref{eni}.
    
    We now consider curves with $j$-invariant $1728$, i.e., those defined for $A \in \F_q^*$ by 
    \[ E_{1728}(A) : y^2 = x^3+Ax. \]
    Eq.\,\eqref{eqn:hessianSWForm} yields the $\F_q$-isomorphisms $\Hess\big(E_{1728}(A)\big) \simeq E_{1728}(-\frac{1}{3A})$ and $\Hess^{(2)}\big(E_{1728}(A)\big) \simeq E_{1728}(A)$, therefore
    \begin{equation} \label{eq:loopequiv}
        \Hess\big(E_{1728}(A)\big) \simeq E_{1728}(A) \quad \iff \quad -3A^2 \in (\F_q^*)^4.
    \end{equation}
    Furthermore, we straightforwardly verify with \cref{eqn:hessianSWForm} that the Hessian of the elliptic curve defined by $y^2 = x^3-\frac{1}{6}x + \frac{1}{36}$ is  $\F_q$-isomorphic to $E_{1728}(-1)$, hence we always have $\Hess^{-1}\big(E_{1728}(-1)\big) \neq \emptyset$.
    \begin{itemize}
        \item $q \equiv 3 \bmod 4$: In this case we have $-1 \notin (\F_q^*)^2$, therefore $(\F_q^*)^2 = (\F_q^*)^4$.
        In particular, by \Cref{lemma:twists}, the  $\F_q$-isomorphism classes of curves with $j$-invariant $1728$ are $\{E_{1728}(1), E_{1728}(-1)\}$.
        Moreover, the map 
        \[ \F_q^*/(\F_q^*)^2 \to \F_q^*/(\F_q^*)^4, \qquad D \mapsto D^{-2}, \]
        is $2$-to-$1$, hence by \Cref{prop:hessianIsom} the two curves of $j$-invariant $-13824$ (\Cref{lem:multRoots}) are mapped to the same element of $j$-invariant $1728$, namely $E_{1728}(-1)$.
        Finally, by \cref{eq:loopequiv}, we conclude $\Hess\big(E_{1728}(1)\big) \simeq E_{1728}(1)$ if and only if $-3 \in (\F_q^*)^2$ (i.e., $q \equiv 1 \bmod 3$), otherwise $\Hess\big(E_{1728}(1)\big) \simeq E_{1728}(-1)$.
        
        \item $q \equiv 1 \bmod 4$: In this case we have four curves of $j$-invariant $1728$, and the map
        \[ \F_q^*/(\F_q^*)^2 \to \F_q^*/(\F_q^*)^4, \qquad D \mapsto D^{-2}, \]
        is injective.
        Thus, by \Cref{prop:hessianIsom}, the curve defined by $y^2 = x^3-\frac{1}{6}x + \frac{1}{36}$ is the unique element in $\Hess^{-1}\big(E_{1728}(-1)\big)$, while its proper quadratic twist belongs to the fiber of the proper quadratic twist of $E_{1728}(-1)$.
        We now consider two cases separately.
        \begin{itemize}
            \item If $-3 \notin (\F_q^*)^2$ (i.e., $q \equiv 2 \bmod 3$), then $-3A^2 \notin (\F_q^*)^4$ for any $A \in \F_q^*$, hence by \cref{eq:loopequiv} all curves with $j$-invariant $1728$ belong to cycles of length $2$.
            Moreover, in this case, the quadratic twist of $E_{1728}(-1)$ is $E_{1728}(3)$, which is not $\F_q$-isomorphic to $\Hess\big(E_{1728}(-1)\big) \simeq E_{1728}(\frac{1}{3})$, because $9 \in \F_q^*$ is not a fourth-power.
            
            \item If $-3 \in (\F_q^*)^2$ (i.e., $q \equiv 1 \bmod 3$), then by \cref{eq:loopequiv} we have $\Hess\big(E_{1728}(A)\big) \simeq E_{1728}(A)$ precisely when $A^2 = -3k^4$, for some $k\in \F_q^*$.
            In particular, the four elements of $j$-invariant $1728$ form one cycle of length $2$ and two loops.
            Let $D$ be a generator of $\F_q^*/(\F_q^*)^4$.
            Two cases may occur:
            \begin{itemize}
                \item if $-3 \in (\F_q^*)^4$, then both $E_{1728}(-1)$ and its quadratic twist $E_{1728}(-D^2)$ satisfy \cref{eq:loopequiv}, therefore they belong to loops.
                \item if $-3 \notin (\F_q^*)^4$, then neither $E_{1728}(-1)$ nor $E_{1728}(-D^2)$ satisfy \cref{eq:loopequiv}, therefore they form a cycle of length $2$.
            \end{itemize}
        \end{itemize}
    \end{itemize}
    Collecting the above properties, the structure of Hessian graphs with twists is locally described by \Cref{tab:0} around $j=0$, and by \Cref{tab:1728} around $j=1728$.
\end{proof}

Employing the above properties of Hessian graphs with twists, we can now prove \Cref{prop:supersingTrace}.

\proof[Proof of \Cref{prop:supersingTrace}] Let $E$ be a supersingular curve such that $j(E)$ belongs to the connected component of the considered $j$ in $\fungraph{\P^1(\F_p)}{\Hess}$ (resp. $\fungraph{\P^1(\F_{p^2})}{\Hess}$).
If $j(E) \notin \{0,1728\}$, then either $E$ or its proper quadratic twist lies in the same connected component of $E_{(j)}$ in the Hessian graph with twists.
By~\cite[§3]{adjEtAl:isogenyGraphsSupersing}, we have
\begin{equation} \label{eq:tr02p}
    \tr(E) = \begin{cases} 
        0 &\textnormal{over } \F_p, \\
        \pm 2p &\textnormal{over } \F_{p^2}.
    \end{cases}
\end{equation}
By \Cref{prop:traceMod3}, we conclude that $\tr(E_{(j)}) \equiv \tr(E) \equiv 0 \textnormal{ (resp. $\not\equiv 0$)} \bmod 3$, as desired.

We now want to prove that \cref{eq:tr02p} holds even when $j(E) \in \{0,1728\}$.
Since $j \notin \{0,1728\}$, up to quadratic twists, we can assume that $\Hess^{(m)}(E_{(j)}) = E$ for some $m \in \NN_{>0}$.
Supersingular curves over $\F_p$ always have trace $0$ \cite[Cor. 4.32]{washington:ellipticCurves}, hence we only need to examine these special $j(E)$ over $\F_{p^2}$.
Up to quadratic twists, by \Cref{prop:specialTwists}, we can assume $E$ to be defined by
\[ \begin{cases}
    y^2 = x^3+1 &\textnormal{if } j(E) = 0, \\
    y^2 = x^3-x &\textnormal{if } j(E) = 1728.
\end{cases} \]
Since $E$ is supersingular and defined over $\F_p$, its Frobenius $\pi$ over $\F_p$ yields $\pi^2 = -p$ \cite[Prop.\,4.32]{washington:ellipticCurves}.
Thus, its Frobenius $\pi^2$ over $\F_{p^2}$ satisfies
\[ 0 = (\pi^{2})^2 - \tr(E)\, \pi^2 + p^2 = 2p^2 + \tr(E)\, p, \]
hence $\tr(E) = -2p$, which (up to quadratic twists) agrees with \cref{eq:tr02p}.
\endproof

\section{Tightness of \texorpdfstring{\Cref{thm:structure}}{Theorem 4.10}}
\label{App:tightnessOfStructureThm}

In this section, we gather examples of groups $G$ and endomorphisms $\phi\in\End(G)$ with finite prime kernels, whose functional graphs realize every configuration listed by \Cref{thm:structure}, regardless of the finiteness of $m$.
Since $\0$ always constitutes a loop in $\fungraph{G}{\phi}$, then $\tau_{\0}$ is always described by case~\ref{per}. 

We first consider the (cyclic) multiplicative group $K^*$ of a field $K$, whose identity element is $\0 = 1$.
For a given prime $\ell \in \NN_{>0}$, we consider the group endomorphism
\[ \phi_\ell : K^* \to K^*, \quad  x \mapsto x^\ell. \]
\begin{itemize}
    \item If $K =\F_q$ is finite, every connected component is finite, hence it falls under case~\ref{per}. Note that $\left|\ker \phi_\ell \right| = \ell$ if and only if $\ell \mid |\F_q^*|$.
    Indeed, if we denote by $u$ a multiplicative generator of $\F_q^*$, then $\ker \phi_\ell$ is generated by $u^\frac{q-1}{\ell}$.
    
    \item If $K=\mathbb{R}$ and $\ell=2$, then $\tau_{\0} = \{1,-1\}$.
    Since every positive real number has precisely two square roots,
    all connected components other than $\tau_{\0}$ fall under case~\ref{line}.
    
    \item Let $K$ be a number field.
    If $K$ contains an $\ell$-th primitive root 
    of $1$, then $\left|\ker \phi_{\ell} \right| = \ell$.
    Since $\textnormal{Norm}_{K|\mathbb{Q}}$ commutes with $\phi_\ell$, every element of $K^*$ has finitely many ancestors in $\fungraph{K^*}{\phi_{\ell}}$, so case~\ref{line} never occurs.
    Finally, since a number field contains only finitely many roots of unity,
    then the number of periodic elements is finite.
    
    Notice that the finiteness of preperiodic elements also follows from the \emph{Northcott property} of number fields (see e.g.\ \cite[Thm.\,5]{Northcott} or \cite[Thm.\,3.12]{silverman2007arithmetic}, as in \Cref{rem:northcott}).
     
   
    \item If $K=\C$ (or, more generally, any algebraically closed field of characteristic different from $\ell$), $\phi_{\ell}$ is everywhere $\ell$-to-1, hence $\tau_{\0}$ is infinite and every other non-periodic connected component falls under case~\ref{line}.
                
    \item If $K=\mathbb{Q}^\mathrm{ab}$, i.e., the maximal abelian extension of $\mathbb{Q}$, then $\left|\ker \phi_{\ell}\right| = \ell$ and $\tau_{\0}$ is infinite by the Kronecker-Weber theorem.
    We now argue that case~\ref{line} cannot occur:
    let us denote the \emph{Weil's height} of $x \in \mathbb{Q}^\mathrm{ab}$ by $h(x)$.
    By \cite[Prop.\,1.2-(iii)]{Zannier20181}, for any $m \in \NN_{>0}$, we have
    \[ h\big( \phi_{\ell}^{(m)}(x) \big) = \ell^m h(x). \]
    However, abelian extensions of $\mathbb{Q}$ enjoy the \emph{Bogomolov property}, i.e., for every non-zero $x \in \mathbb{Q}^\mathrm{ab}$ that is not a root of unity, we have a uniform bound $h(x) \geq \epsilon > 0$ \cite[Thm. 1]{amorosoDvornicich:height}.
    Thus, for any given $y = \phi_{\ell}^{(m)}(x)$, we have 
    \[ \frac{h(y)}{\ell^m} = h(x) \geq \frac{\log 5}{12} \quad \implies \quad m \leq \frac{\log\big( \frac{12\, h(y)}{\log 5} \big)}{\log \ell}. \]
    Therefore, all the non-periodic connected components fall within case~\ref{sline}.   
\end{itemize}

Next, to cover the remaining possible combinations, we consider the following groups:
\begin{itemize}
    \item Let $K$ be a number field containing an $\ell$-th primitive root of $1$.
    We consider $G = \mathbb{R}^* \times K^*$ (with entry-wise multiplication), and
    \[ \phi: G \to G, \quad (x,y) \mapsto (x^3, y^\ell). \]
    It is easy to see that $\left|\ker \phi \right| = \ell$.
    Since $(\alpha,1) \in G$ has infinitely many ancestors for any $\alpha \neq \pm 1$, case~\ref{line} appears in $\fungraph{G}{\phi}$.
    However, points $(\alpha,\beta) \in G$ with $|\beta| \neq 1$ have only finitely many ancestors and cannot become periodic after a finite number of iterations of $\phi$, therefore they belong to connected components as in case~\ref{sline}.
    
    \item As above, but replacing $K$ with $\mathbb{Q}^{\mathrm{ab}}$. The only difference is that $\tau_{\0}$ is infinite.
    
    \item Let $\mu = \{e^{\pi i q}\}_{q \in \mathbb{Q}} \subseteq \C$ be the multiplicative group of roots of unity, and consider its endomorphism $\phi_{\ell} : x \mapsto x^\ell$.
    We have $\left|\ker \phi_{\ell}\right| = \ell$ and $|\tau_{\0}| = \infty$.
    The periodic elements in $\fungraph{\mu}{\phi_{\ell}}$ are of the form $e^{\pi i \frac{n}{m}}$, with $\gcd(m,\ell) = 1$.
    Thus, every element is preperiodic, so the connected components are all of type \ref{per}.
\end{itemize}
The above examples are summarized in Table~\ref{tab:mergedExamples}.

\begin{table}[ht!]
    \centering
    \begin{tabular}{||c|c|c|c|c||}
        \hline
        $G$ & $\phi \in \End(G)$ & $|\tau_{\0}|$  & Case~\ref{line} & Case~\ref{sline} \\
        \hline \hline
        $\F_q^*$ & $x \mapsto x^\ell, \quad \ell \mid (q-1)$ & \textcolor{lightgray}{$< \infty$} & \xmark & \xmark \\
        \hline
        $\mathbb{R}^*$ & $x \mapsto x^2$ & \textcolor{lightgray}{$2$} & \cmark & \xmark \\
        \hline
        $K^*$ & $x \mapsto x^\ell$ & \textcolor{lightgray}{$<\infty$} & \xmark & \cmark \\
        \hline
        $\mathbb{R}^* \times K^*$ & $(x,y) \mapsto (x^3, y^\ell)$ & \textcolor{lightgray}{$<\infty$} & \cmark & \cmark \\
        \hline
        $\mu$ & $x \mapsto x^\ell$ & $\bm{\infty}$ & \xmark & \xmark \\
        \hline
        $\C^*$ & $x \mapsto x^\ell$ & $\bm{\infty}$ & \cmark & \xmark \\
        \hline
        $(\mathbb{Q}^{\mathrm{ab}})^*$ & $x \mapsto x^\ell$ & $\bm{\infty}$ & \xmark & \cmark \\
        \hline
        $\mathbb{R}^* \times (\mathbb{Q}^{\mathrm{ab}})^*$ & $(x,y) \mapsto (x^3, y^\ell)$ & $\bm{\infty}$ & \cmark & \cmark \\
        \hline
    \end{tabular}
    \caption{Occurrence of the cases listed in \Cref{thm:structure}.
    Here $K$ is a number field containing an $\ell$-th primitive root of $1$, and $\mu$ is the group of all roots of unity in $\C$.}
    \label{tab:mergedExamples}
\end{table}

\section{Algorithm for iterated Hessian}
\label{App:iterHess}

The following pseudocode corresponds to the algorithm sketched in \Cref{prop:iterHess}.
A raw SAGE \cite{Sage} implementation is available at \url{https://github.com/DTaufer/Hessian-Graphs/}, and can handle any iterated Hessian over finite fields of size about $256$ bits in less than one second.
\begin{algorithm}[H]
    \caption{$\mathsf{IteratedHessian}$}\label{alg:iterHess}
    \begin{algorithmic}[1]
    \Require{$N \in \NN,\;j\in\F_q$}
    \Ensure{$\Hess^{(N)}(j)$}
    \end{algorithmic}
    \noindent
    \begin{minipage}{0.45\textwidth}
    \begin{algorithmic}[1]
    \State $u \gets $ a generator of  $\F_q^* / (\F_q^*)^6$
    \State $s \gets 0$
    \While{$j u^{2s}$ is not a cube}
        \State $s \gets s+1$
    \EndWhile
    \State $t\gets 0$
    \If{$u^{2s}(j-1728)$ is not a square}
        \State$t \gets 1$
    \EndIf
    \State $u \gets u^{2s+3t}$
    \State $x \gets $ cubic root of $j u$
     \State $y \gets $ square root of $(j-1728) u$
    \State $E \gets $ elliptic curve $y^2= x^3- 1728 u$
    \State $N' \gets-3^{\lfloor N/2 \rfloor} \bmod {|E(\F_{q})|}$
    \State $[x\colon y\colon z] \gets [N'][x\colon y \colon 1]$
    \end{algorithmic}
    \end{minipage}\hfill
    \begin{minipage}{0.45\textwidth}
    \begin{algorithmic}[1]
    \setcounter{ALG@line}{15} 
    \If{$z=0$}
        \State \textbf{return} $\infty$
    \Else 
        \State $j \gets{x}^3  u^{-1}$
        \If{$N$ is odd}
            \If{$j=0$}
                \State \textbf{return} $\infty$
            \Else
                \State  \textbf{return} $(6912-j)^3/(27j^2 )$
            \EndIf
        \Else
            \State \textbf{return} $j$
        \EndIf
    \EndIf
    \end{algorithmic}
    \end{minipage}
\end{algorithm}

\section{Some examples}
\label{App:examples}

In this section, we provide visual instances of the results presented in this paper.
In \Cref{fig:sidebyside} we observe that the Lattès structure induces many symmetries in Hessian graphs, which are not expected for functional graphs of generic rational functions, as discussed in \Cref{rmk:constantmatters}.

\begin{figure}[htbp]
    \centering
    \begin{subfigure}[b]{0.55\textwidth}
        \centering
        \includegraphics[width=0.99\textwidth]{Functional_graph_for_F-6912,-27cropped.pdf}
        \caption{Hessian graph $\fungraph{\P^1(\F_{17})}{\Hess_{-6912}}$.}
        \label{fig:hessian17}
    \end{subfigure}
    \hfill
    \begin{subfigure}[b]{0.44\textwidth}
        \centering
        \includegraphics[width=0.63\textwidth]{Functional_graph_for_F-6912,-8cropped.pdf}
        \caption{Functional graph
$\fungraph{\P^1(\F_{17})}{\Phi_{\frac{27}{8}} \circ \Hess_{-6912}}$.}
        \label{fig:randomFuncGraph}
    \end{subfigure}
    \caption{Comparison of two functional graphs.}
    \label{fig:sidebyside}
\end{figure}

In what follows, we provide some examples of the objects discussed in \Cref{sec:HessianFF}.
To ease the comparison of $\fungraph{\S_{\Hess}}{\psi}$ (Figures~\ref{fig:hessian29psi} and~\ref{fig:hessian31psi}) with their underlying Hessian graphs $\fungraph{\P^1(\F_q)}{\Hess}$ (Figures~\ref{fig:hessian29} and~\ref{fig:hessian31}), the vertices of $\fungraph{\S_{\Hess}}{\psi}$ are labeled with their image after $\pi_{\Hess}$.
The following color coding is used to highlight the coverings of \Cref{cor:Spi4Hessian}: the white vertices correspond to points arising from $E(\F_{q})$, while the gray ones arise from its quadratic twists.
The light-colored points arise from the cubic twists of $E(\F_{q})$, while the dark-colored ones are the respective quadratic twists.
We remark that leaves always have light colors, as prescribed by \Cref{thm:indeg1}.
The SAGE \cite{Sage} code employed to draw $\fungraph{\P^1(\F_q)}{\Hess}$ is available at \url{https://github.com/DTaufer/Hessian-Graphs/}.

\vspace{0.05cm}
\begin{figure}[ht]
    \centering
    \includegraphics[width=0.68\textwidth]{S-graph_for_p=29.pdf}
    \caption{Partial functional graph of $\fungraph{\S_{\Hess}}{\psi}$ over $\F_{29}$.
    The complete $\fungraph{\S_{\Hess}}{\psi}$ can be obtained by repeating the above components three times (the remaining components have the same image after $\pi_{\Hess}$).}
    \label{fig:hessian29psi}
\end{figure}

\begin{figure}
    \centering
    \includegraphics[width=0.68\textwidth]{Hessian_Graph_on_F29.pdf}
    \caption{The Hessian graph $\fungraph{\P^1(\F_{29})}{\Hess}$.}
    \label{fig:hessian29}
\end{figure}


\begin{figure}
    \centering
\includegraphics[width=0.86\textwidth]{Hessian_Graph_on_F113.pdf}
    \caption{The Hessian graph $\fungraph{\P^1(\F_{113})}{\Hess}$.}
    \label{fig:hessian89}
\end{figure}

\begin{figure}[ht]
    \centering
    \includegraphics[width=0.85\textwidth]{S-graph_for_p=31crop.pdf}
    \caption{The functional graph $\fungraph{\S_{\Hess}}{\psi}$ over $\F_{31}$.}
    \label{fig:hessian31psi}
\end{figure}

\begin{figure}
    \centering
    \includegraphics[width=0.55\textwidth]{Hessian_Graph_on_F31.pdf}
    \caption{The Hessian graph $\fungraph{\P^1(\F_{31})}{\Hess}$.}
    \label{fig:hessian31}
\end{figure}

\begin{figure}
    \centering
    \includegraphics[width=1\linewidth]{Hessian_Graph_on_F289.pdf}
    \caption{The Hessian graph $\fungraph{\P^1(\F_{17^2})}{\Hess}$.}
    \label{fig:hessian172}
\end{figure}
\end{document}